\newcommand{\activate}{1}
\newcommand{\aspdf}{1}
\newcommand{\dotcolor}{80}
\newcommand{\nodcolor}{10}
\newcommand{\lincolor}{70}
\newcommand{\vcdraw}[1]{\vcenter{\hbox{#1}}}
\newcommand{\gen}[4]{\kern-1em\begin{array}[t]{c}	#1\\[#2]\kern+#3 #4\end{array}\kern-0.55em}
\newcommand{\nodeminimum}{0.25em}
\newcommand{\nodetextwidth}{0.45em}
\newcommand{\nodelevel}{0.23cm}
\newcommand{\nodesibling}{0.4cm}
\tikzset{
	rnode/.style={
  		circle,
  		draw=black!\lincolor,
  		fill=black!\lincolor,
  		inner sep=0pt,
 		minimum height=\nodeminimum,
  		minimum width=\nodeminimum,
	},
	fnode/.style={
  		align=center,
  		circle,
  		draw=black!\dotcolor,
  		inner sep=0pt,
  		fill=black!\dotcolor,
  		font=\tiny,
  		text centered,
  		text width=\nodetextwidth,
	},
	tnode/.style={
  		align=center,
  		circle,
  		draw=black!\lincolor,
  		inner sep=0pt,
  		fill=black!\nodcolor,
  		font=\tiny,
  		text centered,
  		text width=\nodetextwidth,
	},
	trees/.style={
		level/.style={
			level distance=\nodelevel,
			sibling distance=\nodesibling
		},
		grow=up
	},
	child/.style={
		draw=black!\lincolor
	},
	cuts/.style={
		red,
		snake=snake,
		segment amplitude=0.5,
		segment length=1.5,
		line after snake=0
	}
}
\newcommand{\tnode}{child[child]{node[tnode]{}}}
\newcommand{\figone}{
	\ifnum\aspdf=1\[
		\vcdraw{\includegraphics[scale=0.9]{}}\cdots
		\vcdraw{\includegraphics[scale=0.9]{}}\kern+4em
		\vcdraw{\includegraphics[scale=0.9]{}}\cdots
		\vcdraw{\includegraphics[scale=0.9]{pics/001.pdf}}\kern+4em
		\vcdraw{\includegraphics[scale=0.9]{pics/000.pdf}}\cdots
		\vcdraw{\scalebox{-1}[1]{\includegraphics{}}}
	\]\else
		\tabpart{\snode\&\snode}
		\tabpart{\snode}
		\tabpart{\dnode\&\snode\&\snode}
		\rtabpart{\dnode\&\snode}
	\fi
}
\newcommand{\figtwo}{
	\ifnum\aspdf=1\[
		\vcdraw{\includegraphics[scale=0.9]{}}\qquad
		\vcdraw{\includegraphics[scale=0.9]{}}
	\]\else
		\tabpart{
			\&\&\&\&\&\dnode\&\snode\&\snode\\
			\&\&\&\&\snode\&\snode\\
			\&\&\&\&\snode\\
			\&\&\&\&\dnode\\
			\&\&\snode\&\snode\&\snode\\
			\dnode\&\snode\&\snode
		}
		\tabpart{
			\&\&\&\&\&\snode\&\snode\&\dnode\\
			\&\&\&\&\snode\&\snode\\
			\&\&\&\&\snode\\
			\&\&\&\&\dnode\\
			\&\&\snode\&\snode\&\snode\\
			\snode\&\snode\&\dnode
		}		
	\fi
}
\newcommand{\figthr}{
	\ifnum\aspdf=1
		\includegraphics{}
	\else
		\begin{tikzpicture}
			\node at(0,2)(0){\tabpart{\dnode\&\snode\&\snode}};
			\node at(-1.8,1)(1){\tabpart{\snode\&\snode\\\dnode}};
			\node at(-0.6,1)(2){\tabpart{\&\snode\\\dnode\&\snode}};
			\node at(+0.6,1)(3){\tabpart{\dnode\&\snode\\\snode}};
			\node at(+1.8,1)(4){\tabpart{\&\dnode\\\snode\&\snode}};
			\node at(-1.65,0)(5){\tabpart{\snode\\\snode\\\dnode}};
			\node at(+0.00,0)(6){\tabpart{\snode\\\dnode\\\snode}};
			\node at(+1.65,0)(7){\tabpart{\dnode\\\snode\\\snode}};
			\draw(0)--(1);\draw(0)--(2);\draw(0)--(3);\draw(0)--(4);\draw(1)--(5);\draw(2)--(5);\draw(2)--(6);\draw(3)--(6);\draw(3)--(7);\draw(4)--(7);
		\end{tikzpicture}
	\fi
}
\newcommand{\figfou}{
	\ifnum\aspdf=1
		\begin{array}{ccccc}
			\stackrel{1\oplus1}{\includegraphics[scale=0.9]{}}&
			\stackrel{\kern+0.9em\dot{0}\oplus2}{\includegraphics[scale=0.9]{}}&
			\stackrel{\kern+0.9em\dot{1}\oplus1}{\includegraphics[scale=0.9]{}}&
			\stackrel{\kern-0.9em1\oplus\dot{1}}{\includegraphics[scale=0.9]{}}&
			\stackrel{\kern-0.9em2\oplus\dot{0}}{\includegraphics[scale=0.9]{}}
		\end{array}
	\else
		\tabpart{\snode\&\snode[red!30]}
		\tabpart{\dnode\&\snode[red!30]\&\snode[red!30]}
		\tabpart{\dnode\&\snode\&\snode[red!30]}
		\tabpart{\snode[blue!30]\&\snode\&\dnode}
		\tabpart{\snode[blue!30]\&\snode[blue!30]\&\dnode}
	\fi
}
\newcommand{\figfiv}{
	\ifnum\aspdf=1
		\[\begin{array}{ccc}
			\stackrel{(\dot{0},\dot{6},\dot{3},\dot{4})}{\includegraphics{}}&
			\stackrel{(\dot{0},\dot{7},\dot{2},\dot{4})}{\includegraphics{}}&
			\stackrel{(\dot{1},\dot{5},\dot{3},\dot{4})}{\includegraphics{}}\\
			\stackrel{(\dot{1},\dot{6},\dot{2},\dot{4})}{\includegraphics{}}&
			\stackrel{(\dot{3},\dot{4},\dot{2},\dot{4})}{\includegraphics{}}&
			\stackrel{(\dot{3},\dot{3},\dot{3},\dot{4})}{\includegraphics{}}
		\end{array}\]
	\else
		\[\begin{array}{ccc}
			\stackrel{(\dot{0},\dot{6},\dot{3},\dot{4})}{\tabpart{
				\&\&\&\&\&\&\&\&\&\dnode\&\snode\&\snode\&\snode\&\snode\\
				\&\&\&\&\&\&\dnode\&\snode\&\snode\&\snode\\
				\dnode\&\snode\&\snode\&\snode\&\snode\&\snode\&\snode\\
				\dnode
			}}&
			\stackrel{(\dot{0},\dot{7},\dot{2},\dot{4})}{\tabpart{
				\&\&\&\&\&\&\&\&\&\dnode\&\snode\&\snode\&\snode\&\snode\\
				\&\&\&\&\&\&\&\dnode\&\snode\&\snode\\
				\dnode\&\snode\&\snode\&\snode\&\snode\&\snode\&\snode\&\snode\\
				\dnode
			}}&
			\stackrel{(\dot{1},\dot{5},\dot{3},\dot{4})}{\tabpart{
				\&\&\&\&\&\&\&\&\&\dnode\&\snode\&\snode\&\snode\&\snode\\
				\&\&\&\&\&\&\dnode\&\snode\&\snode\&\snode\\
				\&\dnode\&\snode\&\snode\&\snode\&\snode\&\snode\\
				\dnode\&\snode
			}}\\
			\stackrel{(\dot{1},\dot{6},\dot{2},\dot{4})}{\tabpart{
				\&\&\&\&\&\&\&\&\&\dnode\&\snode\&\snode\&\snode\&\snode\\
				\&\&\&\&\&\&\&\dnode\&\snode\&\snode\\
				\&\dnode\&\snode\&\snode\&\snode\&\snode\&\snode\&\snode\\
				\dnode\&\snode
			}}&
			\stackrel{(\dot{3},\dot{4},\dot{2},\dot{4})}{\tabpart{
				\&\&\&\&\&\&\&\&\&\dnode\&\snode\&\snode\&\snode\&\snode\\
				\&\&\&\&\&\&\&\dnode\&\snode\&\snode\\
				\&\&\&\dnode\&\snode\&\snode\&\snode\&\snode\\
				\dnode\&\snode\&\snode\&\snode
			}}&
			\stackrel{(\dot{3},\dot{3},\dot{3},\dot{4})}{\tabpart{
				\&\&\&\&\&\&\&\&\&\dnode\&\snode\&\snode\&\snode\&\snode\\
				\&\&\&\&\&\&\dnode\&\snode\&\snode\&\snode\\
				\&\&\&\dnode\&\snode\&\snode\&\snode\\
				\dnode\&\snode\&\snode\&\snode
			}}
		\end{array}\]
	\fi

}
\newcommand{\figsix}{
	\ifnum\aspdf=1
		\[\begin{array}{cccccc}
		\quad\includegraphics{}\quad&
		\quad\includegraphics{}\quad&
		\quad\includegraphics{}\quad&
		\quad\includegraphics{}\quad&
		\quad\includegraphics{}\quad&
		\quad\includegraphics{}\quad\\
		1&H_1&H_2&\tilde{H}_0&\tilde{H}_1&\tilde{H}_2
		\end{array}\]
	\else
		\begin{center}
			\begin{tikzpicture}[trees]
				\node[rnode]{}
			;\end{tikzpicture}
			\qquad
			\begin{tikzpicture}[trees]
				\node[rnode]{}
				child[child]{node[tnode]{}}
			;\end{tikzpicture}
			\qquad
			\begin{tikzpicture}[trees]
				\node[rnode]{}
				child[child]{node[tnode]{}
					\tnode
				}
			;\end{tikzpicture}
			\qquad
			\begin{tikzpicture}[trees]
				\node[rnode]{}
				child[child]{node[fnode]{}}
			;\end{tikzpicture}
			\qquad
			\begin{tikzpicture}[trees]
				\node[rnode]{}
				child[child]{node[tnode]{}}
				child[child]{node[fnode]{}}
			;\end{tikzpicture}
			\qquad
			\begin{tikzpicture}[trees]
				\node[rnode]{}
				child[child]{node[tnode]{}
					\tnode
				}
				child[child]{node[fnode]{}}
			;\end{tikzpicture}
		\end{center}
	\fi
}
\newcommand{\figsev}{
	\ifnum\aspdf=1
		\[t_{\alpha}\quad=\quad\vcdraw{
		\includegraphics{pics/023.pdf}\qquad
		\includegraphics{pics/021.pdf}\qquad
		\includegraphics{pics/020.pdf}\qquad
		\includegraphics{}
		}\]
	\else
		\begin{tikzpicture}[trees]
			\node[rnode]{}
			child[child]{node[tnode]{}
				\tnode
				\tnode
			}
			child[child]{node[fnode]{}}
		;\end{tikzpicture}
	\fi
}
\newcommand{\figeig}{
	\ifnum\aspdf=1\[\begin{array}{rcl}
		\Delta(\gen{H}{-0.35cm}{1.2em}{\includegraphics{}})&=&
		\gen{H}{-0.35cm}{1.2em}{\includegraphics{pics/025.pdf}}\otimes1+
		\gen{H}{-0.35cm}{1.4em}{\includegraphics{}}\otimes
		\gen{H}{-0.35cm}{1.2em}{\includegraphics{}}+
		\gen{H}{-0.35cm}{1.2em}{\includegraphics{}}\otimes
		\gen{H}{-0.35cm}{1.4em}{\includegraphics{}}+
		\gen{H}{-0.35cm}{1.2em}{\includegraphics{pics/029.pdf}}\otimes
		\gen{H}{-0.35cm}{1.2em}{\includegraphics{}}\\&&-
		\gen{H}{-0.35cm}{1.2em}{\includegraphics{pics/027.pdf}}\otimes
		\gen{H}{-0.35cm}{1.2em}{\includegraphics{pics/029.pdf}}+
		\gen{H}{-0.35cm}{1.4em}{\includegraphics{pics/030.pdf}}\otimes
		\gen{H}{-0.35cm}{1.2em}{\includegraphics{pics/028.pdf}}-
		\gen{H}{-0.35cm}{1.2em}{\includegraphics{pics/029.pdf}}\otimes
		\gen{H}{-0.35cm}{1.4em}{\includegraphics{pics/026.pdf}}+1\otimes
		\gen{H}{-0.35cm}{1.2em}{\includegraphics{pics/025.pdf}}.
	\end{array}\\[-0.2cm]\]\else
		$\tabpart{\dnode\\\snode\\\dnode}$
		$\tabpart{\snode\\\dnode}$
		$\tabpart{\dnode\\\snode}$
		$\tabpart{\dnode\\\dnode}$
		$\tabpart{\dnode\\\snode\\\dnode}$
		$\tabpart{\dnode}$
		$\tabpart{\snode}$
	\fi
}
\newcommand{\fignin}{
	\ifnum\aspdf=1\[
		S(\kern-1em\gen{H}{-0.35cm}{2.3em}{\includegraphics{}})=\kern+0.5em-\kern-0.7em
		\gen{H}{-0.35cm}{2.3em}{\includegraphics{pics/031.pdf}}+\kern-0.5em
		\gen{H}{-0.35cm}{1.8em}{\includegraphics{}}+
		\gen{H}{-0.35cm}{1.0em}{\includegraphics{}}+\kern-0.5em
		\gen{H}{-0.35cm}{1.8em}{\includegraphics{}}+
		\gen{H}{-0.35cm}{1.0em}{\includegraphics{}}-
		\gen{H}{-0.35cm}{1.4em}{\includegraphics{}}-
		\gen{H}{-0.35cm}{1.4em}{\includegraphics{}}-
		\gen{H}{-0.35cm}{1.2em}{\includegraphics{}}.
	\]\else
		$\tabpart{\snode\&\snode\&\dnode}$
		$\tabpart{\snode\&\snode\\\dnode}$
		$\tabpart{\&\snode\\\snode\&\dnode}$
		$\tabpart{\snode\&\dnode\\\snode}$
		$\tabpart{\&\dnode\\\snode\&\snode}$
		$\tabpart{\snode\\\snode\\\dnode}$
		$\tabpart{\snode\\\dnode\\\snode}$
		$\tabpart{\dnode\\\snode\\\snode}$
	\fi
}
\newcommand{\figten}{
	\ifnum\aspdf=1\[
		\vcdraw{\includegraphics[scale=0.9]{}}\odot
		\vcdraw{\includegraphics[scale=0.9]{}}=
		\vcdraw{\includegraphics[scale=0.9]{}}
	\]\else
		$\tabpart{\&\dnode\&\snode\&\snode\\\snode\&\snode\\\dnode}$
		$\tabpart{\&\snode\&\snode\&\snode\\\dnode\&\snode\\\snode}$
		$\tabpart{\&\&\&\&\&\&\snode[red!30]\&\snode[red!30]\&\snode[red!30]\\\&\&\&\&\&\dnode\&\snode[red!30]\\\&\&\dnode\&\snode\&\snode\&\snode[red!30]\\\&\snode\&\snode\\\&\dnode}$
	\fi
}
\newcommand{\figele}{
	\ifnum\aspdf=1\[
		\Psi_3\,\,=\kern-1.1em
		\gen{H}{-0.35cm}{2.5em}{\includegraphics{}}-
		\gen{H}{-0.35cm}{0.2em}{\includegraphics{}}+
		\gen{H}{-0.35cm}{0.7em}{\includegraphics{}}+
		\gen{H}{-0.35cm}{0.7em}{\includegraphics{}}-
		\gen{H}{-0.35cm}{1.2em}{\includegraphics{}}-\kern-0.4em
		\gen{H}{-0.35cm}{1.2em}{\includegraphics{}}+\kern-0.1em
		\gen{H}{-0.35cm}{1.6em}{\includegraphics{}}-\kern-0.8em
		\gen{H}{-0.35cm}{2.0em}{\includegraphics{}}.
	\]\else
		\tabpart{\dnode\&\snode\&\snode\&\snode}
		\tabpart{\&\&\dnode\\\snode\&\snode\&\snode}
		\tabpart{\&\dnode\\\snode\&\snode\\\snode}
		\tabpart{\&\dnode\\\&\snode\\\snode\&\snode}
		\tabpart{\dnode\\\snode\\\snode\\\snode}
		\tabpart{\&\dnode\&\snode\\\snode\&\snode}
		\tabpart{\dnode\&\snode\\\snode\\\snode}
		\tabpart{\dnode\&\snode\&\snode\\\snode}
	\fi
}
\newcommand{\figtwe}{
	\ifnum\aspdf=1
		\[\includegraphics{}\]
	\else
		\begin{tikzpicture}
			\node at(0,0)(0){
				\begin{tikzpicture}[trees]
					\node[rnode]{}
					child[child]{node[tnode]{}
						\tnode
						\tnode
					}
					child[child]{node[fnode]{}};
					\draw[cuts](0.2,0.9)to(0.4,0.9);
					\draw[cuts,rotate=41](-0.1,0.2)to(0.1,0.2);
				\end{tikzpicture}\,\,
			};
			\node[align=center,inner sep=0.0pt]at(3.25,+0.6)(1){
				\,\,\,\begin{tikzpicture}[trees]
					\node[rnode]{}
					child[child]{node[tnode]{}}
					child[child]{node[fnode]{}};
				\end{tikzpicture}
			};			
			\node[align=center,inner sep=8.5pt]at(3.25,-0.6)(2){
				\,\,\,\kern-0.1em\begin{tikzpicture}[trees]
					\node[rnode]{}
					child[child]{node[tnode]{}
						\tnode
					};
				\end{tikzpicture}
			};
			\node at(1.9,+0.6)(4){$P^c(t_{\dot{3}})\,\,=$};
			\node at(1.9,-0.6)(5){$R^c(t_{\dot{3}})\,\,=$};
			\draw(0)--(4);\draw(0)--(5);
		\end{tikzpicture}
	\fi
}
\newcommand{\figthi}{
	\ifnum\aspdf=1\[
		t_{\alpha}\,=
		\includegraphics{}\kern+0.8em
		\includegraphics{}\kern+0.8em
		\includegraphics{}\kern+0.8em
		\includegraphics{}\kern+0.8em
		\includegraphics{}\kern+0.8em
		\qquad
		P^c(t_{\alpha})\,=
		\includegraphics{}\kern+0.8em
		\includegraphics{}\kern+0.8em
		\includegraphics{}\kern+0.8em
		\includegraphics{}\kern+0.8em
		\qquad
		R^c(t_{\alpha})\,=
		\includegraphics{}\kern+0.8em
		\includegraphics{pics/019.pdf}\kern+0.8em
		\includegraphics{pics/020.pdf}
	\]\else
		\begin{tikzpicture}[trees]
			\node[rnode]{}
			child[child]{node[tnode]{}
				\tnode
			}
			child[child]{node[fnode]{}};
			\node at(-0.28,0.65){$_{\red1}$};
		\end{tikzpicture}\,\,\,
		\begin{tikzpicture}[trees]
			\node[rnode]{}
			child[child]{node[fnode]{}};
			\node at(0.02,0.65){$_{\red2}$};
			\draw[cuts](-0.1,0.16)to(0.1,0.16);
		\end{tikzpicture}\,\,\,
		\begin{tikzpicture}[trees]
			\node[rnode]{}
			child[child]{node[tnode]{}
				\tnode
			};
			\draw[cuts](-0.1,0.54)to(0.1,0.54);
		\end{tikzpicture}\,\,\,
		\begin{tikzpicture}[trees]
			\node[rnode]{}
			child[child]{node[tnode]{}
				\tnode
				\tnode
			}
			child[child]{node[fnode]{}};
			\node at(-0.28,0.65){$_{\red3}$};
			\draw[cuts](0.2,0.9)to(0.4,0.9);
			\draw[cuts,rotate=41](-0.1,0.2)to(0.1,0.2);
		\end{tikzpicture}\,\,\,
		\begin{tikzpicture}[trees]
			\node[rnode]{}
			child[child]{node[tnode]{}}
			child[child]{node[fnode]{}};
			\node at(-0.28,0.65){$_{\red4}$};
			\draw[cuts,rotate=41](-0.1,0.2)to(0.1,0.2);
			\draw[cuts,rotate=-41](-0.1,0.2)to(0.1,0.2);
		\end{tikzpicture}\,\,\,
		\quad
		\begin{tikzpicture}[trees]
			\node[rnode]{}
			child[child]{node[fnode]{}};
			\node at(0.02,0.65){$_{\red2}$};
		\end{tikzpicture}\,\,\,
		\begin{tikzpicture}[trees]
			\node[rnode]{}
			child[child]{node[tnode]{}};
		\end{tikzpicture}\,\,\,
		\begin{tikzpicture}[trees]
			\node[rnode]{}
			child[child]{node[tnode]{}}
			child[child]{node[fnode]{}};
			\node at(-0.28,0.65){$_{\red3}$};
		\end{tikzpicture}\,\,\,
		\begin{tikzpicture}[trees]
			\node[rnode]{}
			child[child]{node[tnode]{}}
			child[child]{node[fnode]{}};
			\node at(-0.28,0.65){$_{\red4}$};
		\end{tikzpicture}\,\,\,
		\quad
		\begin{tikzpicture}[trees]
			\node[rnode]{}
			child[child]{node[tnode]{}
				\tnode
			}
			child[child]{node[fnode]{}};
			\node at(-0.28,0.65){$_{\red1}$};
		\end{tikzpicture}\,\,\,
		\begin{tikzpicture}[trees]
			\node[rnode]{}
			child[child]{node[tnode]{}};
		\end{tikzpicture}\,\,\,
		\begin{tikzpicture}[trees]
			\node[rnode]{}
			child[child]{node[tnode]{}
				\tnode
			};
		\end{tikzpicture}
	\fi
}
\newcommand{\figftn}{
	\ifnum\aspdf=1\[\begin{array}{cccccc}
		\includegraphics[scale=0.9]{}&
		\includegraphics[scale=0.9]{}&
		\includegraphics[scale=0.9]{}&
		\includegraphics[scale=0.9]{}&
		\includegraphics[scale=0.9]{}&
		\includegraphics[scale=0.9]{}\\[-1.2mm]
		^{(1,\dot{3},2)}&^{()(1,\dot{3},2)}&^{(1)(\dot{3},2)}&^{(1,\dot{3})(2)}&^{(1,\dot{3},2)()}&^{(1,\dot{0})\odot(3,2)}\\
		\includegraphics[scale=0.9]{}&
		\includegraphics[scale=0.9]{}&
		\includegraphics[scale=0.9]{}&
		\includegraphics[scale=0.9]{}&
		\includegraphics[scale=0.9]{}&
		\includegraphics[scale=0.9]{}\\[-1.2mm]
		^{(1,\dot{1})\odot(2,2)}&^{(1,\dot{2})\odot(1,2)}&^{(1,\dot{3},1)\odot(1)}&^{(1,3)\odot(\dot{0},2)}&^{(1,2)\odot(\dot{1},2)}&^{(1,1)\odot(\dot{2},2)}
	\end{array}\]\else
		\tabpart{
			\&\&\&\snode\&\snode\\
			\dnode\&\snode\&\snode\&\snode\\
			\snode
		}
		\tabpart{
			\&\&\&\snode[red!30]\&\snode[red!30]\\
			\dnode\&\snode[red!30]\&\snode[red!30]\&\snode[red!30]\\
			\snode[red!30]
		}
		\tabpart{
			\&\&\&\snode[red!30]\&\snode[red!30]\\
			\dnode\&\snode[red!30]\&\snode[red!30]\&\snode[red!30]\\
			\snode[blue!30]
		}
		\tabpart{
			\&\&\&\snode[red!30]\&\snode[red!30]\\
			\dnode\&\snode[blue!30]\&\snode[blue!30]\&\snode[blue!30]\\
			\snode[blue!30]
		}
		\tabpart{
			\&\&\&\snode[blue!30]\&\snode[blue!30]\\
			\dnode\&\snode[blue!30]\&\snode[blue!30]\&\snode[blue!30]\\
			\snode[blue!30]
		}
		\tabpart{
			\&\&\&\snode[red!30]\&\snode[red!30]\\
			\dnode\&\snode[red!30]\&\snode[red!30]\&\snode[red!30]\\
			\snode
		}
		\tabpart{
			\&\&\&\snode[red!30]\&\snode[red!30]\\
			\dnode\&\snode\&\snode[red!30]\&\snode[red!30]\\
			\snode
		}
		\tabpart{
			\&\&\&\snode[red!30]\&\snode[red!30]\\
			\dnode\&\snode\&\snode\&\snode[red!30]\\
			\snode
		}
		\\[1cm]
		\tabpart{
			\&\&\&\snode\&\snode[red!30]\\
			\dnode\&\snode\&\snode\&\snode\\
			\snode
		}
		\tabpart{
			\&\&\&\snode\&\snode\\
			\snode[blue!30]\&\snode[blue!30]\&\snode[blue!30]\&\dnode\\
			\snode[blue!30]
		}
		\tabpart{
			\&\&\&\snode\&\snode\\
			\snode[blue!30]\&\snode[blue!30]\&\snode\&\dnode\\
			\snode[blue!30]
		}
		\tabpart{
			\&\&\&\snode\&\snode\\
			\snode[blue!30]\&\snode\&\snode\&\dnode\\
			\snode[blue!30]
		}
	\fi
}
\newcommand{\figfif}{
	\ifnum\aspdf=1\[\begin{array}{rcl}
		\Delta(\kern-0.3em\gen{L}{-0.35cm}{0.4em}{\includegraphics{}})&\!\!\!=\!\!\!&1\otimes\kern-0.7em
		\gen{L}{-0.35cm}{0.4em}{\includegraphics{pics/073.pdf}}+\!
		\gen{L}{-0.35cm}{1.3em}{\includegraphics{}}\kern-0.1em\otimes\kern-0.7em
		\gen{L}{-0.35cm}{0.4em}{\includegraphics{}}+\kern-1.4em
		\gen{L}{-0.35cm}{2.4em}{\includegraphics{}}\kern-0.1em\otimes\kern-0.9em
		\gen{L}{-0.35cm}{2.0em}{\includegraphics{}}+\!
		\gen{L}{-0.35cm}{0.4em}{\includegraphics{pics/073.pdf}}\otimes1\,+
		\gen{L}{-0.35cm}{1.0em}{\includegraphics{}}\kern-0.3em\otimes\kern-0.7em
		\gen{L}{-0.35cm}{1.0em}{\includegraphics{}}+\kern-0.4em
		\gen{L}{-0.35cm}{1.5em}{\includegraphics{}}\kern-0.1em\otimes\kern-0.7em
		\gen{L}{-0.35cm}{1.4em}{\includegraphics{}}\\&&+\kern-0.8em
		\gen{L}{-0.35cm}{1.9em}{\includegraphics{}}\!\otimes\kern-0.8em
		\gen{L}{-0.35cm}{1.8em}{\includegraphics{}}\,\,+\,\,
		\gen{L}{-0.35cm}{-0.05em}{\includegraphics{}}\!\otimes\kern-0.2em
		\gen{L}{-0.35cm}{1.35em}{\includegraphics{}}\,+\kern-1.1em
		\gen{L}{-0.35cm}{2.3em}{\includegraphics{}}\!\otimes\kern-0.7em
		\gen{L}{-0.35cm}{1.8em}{\includegraphics{}}+\kern-0.5em
		\gen{L}{-0.35cm}{1.8em}{\includegraphics{}}\otimes\kern-0.7em
		\gen{L}{-0.35cm}{1.3em}{\includegraphics{}}+
		\gen{L}{-0.35cm}{1.3em}{\includegraphics{}}\otimes\kern-0.6em
		\gen{L}{-0.35cm}{0.9em}{\includegraphics{}}.
	\end{array}\]\else
		\tabpart{
			\snode
		}
		\tabpart{
			\&\&\&\snode\&\snode\\
			\dnode\&\snode\&\snode\&\snode
		}
		\tabpart{
			\dnode\&\snode\&\snode\&\snode\\
			\snode
		}
		\tabpart{
			\&\&\&\snode\&\snode
		}
		\tabpart{
			\dnode\\
			\snode
		}
		\tabpart{
			\&\&\&\snode\&\snode\\
			\&\snode\&\snode\&\snode
		}
		\tabpart{
			\dnode\&\snode\\
			\snode
		}
		\tabpart{
			\&\&\snode\&\snode\\
			\&\snode\&\snode
		}
		\tabpart{
			\dnode\&\snode\&\snode\\
			\snode
		}
		\tabpart{
			\snode\&\snode\\
			\snode\\
		}
		\tabpart{
			\&\&\&\snode\\
			\dnode\&\snode\&\snode\&\snode\\
			\snode
		}
		\tabpart{
			\snode
		}
		\tabpart{
			\snode\&\snode\&\snode\\
			\snode
		}
		\tabpart{
			\snode\&\snode\\
			\dnode
		}
		\tabpart{
			\snode\&\snode\\
			\snode
		}
		\tabpart{
			\&\snode\&\snode\\
			\snode\&\dnode
		}
		\tabpart{
			\snode\\
			\snode
		}
		\tabpart{
			\&\&\snode\&\snode\\
			\snode\&\snode\&\dnode
		}
	\fi
}
\newcommand{\figstn}{
	\ifnum\aspdf=1\[
		\alpha\,\,=\!\!\vcdraw{\includegraphics{}}\qquad
		\tilde{\alpha}\,\,=\,\vcdraw{\includegraphics{}}
	\]\else
		\tabpart{
			\&\&\&\&\&\&\&\snode\&\snode\&\snode\&\snode\&\dnode\\
			\&\&\&\&\&\snode\&\snode\&\snode\\
			\&\&\&\&\snode\&\snode\&\dnode\\
			\&\&\snode\&\snode\&\snode\\
			\&\snode\&\snode\\\
			\snode\&\dnode
		}
		\tabpart{
			\snode\&\snode\\\
			\snode\&\snode\&\snode\\
			\snode\&\snode\&\snode\\
			\snode\&\dnode\\
			\snode\&\snode\&\dnode\\			
			\snode\&\snode\&\snode\&\snode\&\dnode
		}
	\fi
}
\newcommand{\figsvt}{
	\ifnum\aspdf=1\[\begin{array}{rcl}
		\Delta(\,
			\includegraphics{}\,\,
			\includegraphics{}\,\,
			\includegraphics{}
		\,)&\!=\!&
		\includegraphics{}\otimes
		\includegraphics{pics/094-1U.pdf}\,\,
		\includegraphics{pics/094-3D.pdf}\,\,
		\includegraphics{pics/094-2U.pdf}\,\,+\,\,
		\includegraphics{pics/094-1U.pdf}\otimes
		\includegraphics{pics/094-3D.pdf}\,\,
		\includegraphics{pics/094-2U.pdf}\,\,+\,\,
		\includegraphics{pics/094-1U.pdf}\,\,
		\includegraphics{}\otimes
		\includegraphics{}\,\,
		\includegraphics{pics/094-2U.pdf}\,\,+\,\,
		\includegraphics{pics/094-1U.pdf}\,\,
		\includegraphics{}\otimes
		\includegraphics{pics/094-2U.pdf}\,\,
		\includegraphics{pics/094-2U.pdf}\,\,+\,\,
		\includegraphics{pics/094-1U.pdf}\,\,
		\includegraphics{}\otimes
		\includegraphics{pics/094-1U.pdf}\,\,
		\includegraphics{pics/094-2U.pdf}\,\,+\,\,
		\includegraphics{pics/094-1U.pdf}\,\,
		\includegraphics{pics/094-3D.pdf}\otimes
		\includegraphics{pics/094-2U.pdf}\kern+2cm\\[0.2cm]&&+\,\,
		\includegraphics{pics/094-1U.pdf}\,\,
		\includegraphics{pics/094-1U.pdf}\otimes
		\includegraphics{pics/094-2D.pdf}\,\,
		\includegraphics{pics/094-2U.pdf}\,\,+\,\,
		\includegraphics{pics/094-1U.pdf}\,\,
		\includegraphics{pics/094-2U.pdf}\otimes
		\includegraphics{pics/094-1D.pdf}\,\,
		\includegraphics{pics/094-2U.pdf}\,\,+\,\,
		\includegraphics{pics/094-1U.pdf}\,\,
		\includegraphics{pics/094-3U.pdf}\otimes
		\includegraphics{pics/094-0D.pdf}\,\,
		\includegraphics{pics/094-2U.pdf}\,\,+\,\,
		\includegraphics{pics/094-1U.pdf}\,\,
		\includegraphics{pics/094-3D.pdf}\,\,
		\includegraphics{pics/094-1U.pdf}\otimes
		\includegraphics{pics/094-1U.pdf}\,\,+\,\,
		\includegraphics{pics/094-1U.pdf}\,\,
		\includegraphics{pics/094-3D.pdf}\,\,
		\includegraphics{pics/094-2U.pdf}\otimes
		\includegraphics{pics/094-0U.pdf}
	\end{array}\]\else
		\begin{tikzpicture}[trees]
			\node[rnode]{}
		;\end{tikzpicture}
		\begin{tikzpicture}[trees]
			\node[rnode]{}
			child[child]{node[tnode]{}}
		;\end{tikzpicture}
		\begin{tikzpicture}[trees]
			\node[rnode]{}
			child[child]{node[tnode]{}
				\tnode
			}
		;\end{tikzpicture}
		\begin{tikzpicture}[trees]
			\node[rnode]{}
			child[child]{node[tnode]{}
				\tnode
				\tnode
			}
		;\end{tikzpicture}
		\begin{tikzpicture}[trees]
			\node[rnode]{}
			child[child]{node[fnode]{}};
		;\end{tikzpicture}
		\begin{tikzpicture}[trees]
			\node[rnode]{}
			child[child]{node[tnode]{}}
			child[child]{node[fnode]{}};
		;\end{tikzpicture}
		\begin{tikzpicture}[trees]
			\node[rnode]{}
			child[child]{node[tnode]{}
				\tnode
			}
			child[child]{node[fnode]{}};
		;\end{tikzpicture}
		\begin{tikzpicture}[trees]
			\node[rnode]{}
			child[child]{node[tnode]{}
				\tnode
				\tnode
			}
			child[child]{node[fnode]{}};
		;\end{tikzpicture}
	\fi
}
\title{On the Hopf algebra of noncommutative\\symmetric functions in superspace}
\author[D. Arcis, C. Gonz\'alez and S. M\'arquez]{Diego Arcis\\Camilo Gonz\'alez\\Sebasti\'an M\'arquez}
\address{Departamento de Matem\'aticas, Universidad Aut\'onoma de Chile -- Sede Talca, 5 Poniente 1670 -- 3460000 Talca, Chile.\textcolor{white}{$\underbrace{1}$}\newline
Instituto de Matem\'aticas, Universidad de Talca -- Campus Norte, Camino Lircay S/N -- 3460000 Talca, Chile.\textcolor{white}{$\underbrace{.}$}\newline
Departamento de Matem\'aticas, Universidad Aut\'onoma de Chile -- Sede Talca, 4 Norte 95 -- 3460000 Talca, Chile.}
\email{diego.arcis@uautonoma.cl}
\email{cgonzalez@inst-mat.utalca.cl}
\email{sebastian.marquez@uautonoma.cl}
\numberwithin{equation}{section}
\begin{document}

\newtheorem{thm}{Theorem}[section]
\newtheorem{crl}[thm]{Corollary}
\newtheorem{dfn}[thm]{Definition}
\newtheorem{lem}[thm]{Lemma}
\newtheorem{pro}[thm]{Proposition}

\theoremstyle{definition}
\newtheorem{exm}[thm]{Example}
\newtheorem{ntn}[thm]{Notation}
\newtheorem{rem}[thm]{Remark}

\def\blue{\color{blue}}\def\red{\color{red}}\def\green{\color{green}}
\newcommand{\stred}[1]{\textcolor{red}{\st{#1}}}

\newcommand{\sNSym}{\mathrm{\bf sNSym}}
\newcommand{\NSym}{\mathrm{\bf NSym}}
\newcommand{\sSym}{\mathrm{\bf sSym}}
\newcommand{\sQSym}{\mathrm{\bf sQSym}}
\newcommand{\QSym}{\mathrm{\bf QSym}}
\newcommand{\SSym}{\mathrm{\bf Sym}}

\newcommand{\mH}{\mathcal{H}}
\newcommand{\mP}{\mathcal{P}}

\newcommand{\Prim}{\mathrm{Prim}}

\newcommand{\tC}{\tilde{C}}
\newcommand{\tH}{\tilde{H}}
\newcommand{\tS}{\tilde{S}}
\newcommand{\tP}{\tilde{P}}

\newcommand{\te}{\tilde{e}}
\newcommand{\tp}{\tilde{p}}

\renewcommand{\th}{\tilde{h}}

\newcommand{\field}{\Q}

\newcommand{\N}{\mathbb{N}}

\newcommand{\adm}{\operatorname{adm}}
\newcommand{\sgn}{\operatorname{sgn}}

\newcommand{\cv}{\operatorname{cv}}
\newcommand{\df}{\operatorname{df}}
\newcommand{\id}{\operatorname{id}}
\newcommand{\rg}{\operatorname{rg}}

\newcommand{\Sym}{\mathfrak{S}}

\begin{abstract}
We study in detail the Hopf algebra of noncommutative symmetric functions in superspace $\sNSym$, introduced by Fishel, Lapointe and Pinto. We introduce a family of primitive elements of $\sNSym$ and extend the noncommutative elementary and power sum functions to superspace. Then, we give formulas relating these families of functions. Also, we introduce noncommutative Ribbon Schur functions in superspace and provide a explicit formula for their product. We show that the dual basis of these function is given by a family of the so--called fundamental quasisymmetric functions in superspace. This allows us to obtain a explicit formula for the coproduct of fundamental quasisymmetric functions in superspace. Additionally, by projecting the noncommutative Ribbon Schur functions in superspace, we define a new basis for the algebra of symmetric functions in superspace. On the other hand, we also show that $\sNSym$ can be realised as a Hopf algebra of trees.
\end{abstract}

\maketitle


\section{Introduction}

The classical ring of symmetric functions $\SSym$ is a very important object in algebraic combinatorics. This has been widely studied due to its richness properties and multiple applications in several areas such as representations theory, algebraic geometry and Lie algebras \cite{Mac99,St99}. Symmetric functions are important not only in mathematics, but also in connection with integrable models in physics \cite{MiJiDa00}. Due to this connection, and motivated by a supersymmetric generalization of the Calogero--Moser--Sutherland model \cite{DeLaMa01,DeLaMaProc04}, it was developed in \cite{DeLaMa03,DeLaMa04} a new class of functions called \emph{symmetric functions in superspace}, which generalize the classic symmetric functions.

A \emph{symmetric function in superspace} is obtained by considering the classic infinite collection of commutative variables $x=(x_1,x_2,\ldots)$ together with an infinite collection of anticommutative variables $\theta=(\theta_1,\theta_2,\ldots)$, that is, $\theta_i\theta_j=-\theta_j\theta_i$. In particular, $\theta_i^2=0$. As it was shown in \cite{DeLaMa06}, the classic bases of $\SSym$, as the monomials, power--sum, elementary and homogeneous symmetric functions, can be naturally extended to superspace. More complex bases as Schur, Jack and Macdonald polynomial were also extended and studied in \cite{BlDeLaMa12,BlMa15,DeLaMa07,JoLa17,DeLaMa11,GoLa20,GaJoLa19}. The combinatorial tools used to study these objects are the so--called \emph{superpartitions}, which generalize the classic partitions of numbers.

A \emph{superpartition} is a pair $\Lambda=(\Lambda^a;\Lambda^s)$, where $\Lambda^a$ is a strictly decreasing partition, including possible a zero, and $\Lambda^s$ is a usual partition. The \emph{degree} $|\Lambda|$ of $\Lambda$ is the sum of the components of $\Lambda^a$ with the ones of $\Lambda^s$. The \emph{fermionic degree} $\df(\Lambda)$ of $\Lambda$ is the length of $\Lambda^a$. In particular, every usual partition can be regarded as a superpartition with null fermionic degree.

Thus, given a superpartition $\Lambda=(\Lambda_1,\ldots,\Lambda_m;\Lambda_{m+1},\ldots,\Lambda_n)$, some of the classical symmetric functions in superspace are defined as follows:\begin{itemize}
\item\emph{Monomial symmetric functions in superspace}:\[m_{\Lambda}(x,\theta)=\sum_{\sigma\in\Sym_n} \theta_{\sigma(1)}\cdots\theta_{\sigma(m)}x_{\theta(1)}^{\Lambda_1}\cdots x_{\sigma(n)}^{\Lambda_n},\]where the sum consider no repeated terms.
\item\emph{Power--sum symmetric functions in superspace}: $p_{\Lambda}=\tp_{\Lambda_1}\cdots\tp_{\Lambda_m}p_{\Lambda_{m+1}}\cdots p_{\Lambda_n}$, where\[\tp_k=\sum_{i=1}^n\theta_ix_i^k\quad\text{and}\quad p_r=\sum_{i=1}^nx_i^r,\quad\text{for}\quad k\geq0, r\geq1.\]
\item\emph{Elementary symmetric functions in superspace}: $e_{\Lambda}=\te_{\Lambda_1}\cdots\te_{\Lambda_m}e_{\Lambda_{m+1}}\cdots e_{\Lambda_n}$, where\[\te_k=m_{(0;1^k)}\quad\text{and}\quad e_r=m_{(\emptyset;1^r)},\quad\text{for}\quad k\geq0, r\geq1.\] 
\item\emph{Complete homogeneous symmetric functions in superspace}: $h_{\Lambda}=\th_{\Lambda_1}\ldots\th_{\Lambda_m}h_{\Lambda_{m+1}}\cdots h_{\Lambda_n}$, where\[\th_k=\sum_{|\Lambda|=k,\,\df(\Lambda)=1}(\Lambda_1+1)m_{\Lambda}\quad\text{ and}\quad h_r=\sum_{|\Lambda|=r,\,\df(\Lambda)=0}m_\Lambda,\quad\text{for}\quad k\geq0, r\geq1.\]
\end{itemize}
Note that if $\Lambda^a=\emptyset$, we obtain a classic symmetric function.

It is well known that $\SSym$ has a Hopf algebra structure \cite{Eh96}. In \cite{FiLaPi19}, the set of symmetric functions in superspace was also given with a Hopf algebra structure $\sSym$, which has explicit formulas for the product, coproduct and antipode in terms of the bases given above. This Hopf algebra is commutative, cocommutative (up a sign), self dual \cite[Proposition 4.8]{FiLaPi19} and contains $\SSym$ as a Hopf subalgebra.

Another important ring related to symmetric functions is the one of \emph{quasisymmetric functions} $\QSym$ \cite{Ges84}, whose Hopf structure has several applications in the theory of symmetric functions such as an expansion of the Macdonald polynomial in terms of the so--called fundamental quasisymmetric functions \cite{Ges84}. The fundamental quasisymmetric functions provide an important basis of $\QSym$, since they share similar properties with Schur functions. The product, coproduct and antipode of these type of functions in superspace is an open problem. Here, we partially lead with this problem. 

The dual structure of $\QSym$, defined in \cite{GKLLRT95}, is the Hopf algebra of \emph{noncommutative symmetric functions} $\NSym$. It was shown in \cite{GKLLRT95} that there is a connection of $\NSym$ with the Solomon's descent algebra, which is applied to the study of formal power series with coefficients in a noncommutative algebra. The usual bases of $\QSym$ and $\NSym$ are indexed by compositions of numbers.

It was shown in \cite{FiLaPi19} that both $\QSym$ and its dual $\NSym$ can be extended to superspace as the Hopf algebras of \emph{quasisymmetric function in superspace} $\sQSym$ and of \emph{noncommutative symmetric functions in superspace} $\sNSym$, respectively. In superspace, the classical bases are indexed by the so--called \emph{dotted compositions}, that is, tuples of nonnegative integers in which some of their components are labelled by a dot. In particular, classic compositions can be regarded as dotted compositions which none of its components is labelled. The vector space $\sQSym$ has a basis formed by the \emph{monomial quasisymmetric functions in superspace} $M_{\alpha}$, with $\alpha=(\alpha_1,\ldots,\alpha_k)$ a dotted composition, defined by\[M_{\alpha}=\sum_{i_1<\cdots<i_k}\theta_{i_1}^{\eta_1}\cdots\theta_{i_k}^{\eta_k}x_{i_1}^{\alpha_1}\cdots x_{i_k}^{\alpha_k},\]where each $\eta_i$ is either $0$ if $\alpha_i$ is labelled by a dot or $1$ otherwise. The product of these monomials in $\sQSym$ can be described by means a type of \emph{overlapping shuffles}. Respectively, the coproduct of a monomial is obtained by deconcatenating its components. Since monomial symmetric functions in superspace $m_{\Lambda}$ can be expanded in terms of the $M_{\alpha}$'s, then $\sSym$ is a Hopf subalgebra of $\sQSym$. See \cite{FiLaPi19} for details.

The Hopf algebra $\sNSym$ has a basis $\{H_{\alpha}\mid\alpha\text{ is a dotted composition}\}$ that is dual to the monomial quasisymmetric functions in superspace, that is, $\langle H_{\alpha},M_{\beta}\rangle=\delta_{\alpha\beta}$.

It is known that $\sNSym$ is isomorphic to the free algebra generated by the family of elements $H_n:=H_{(n)}$ and $\tH_n:=H_{(\dot{n})}$. Moreover, the coproducts of $H_n$ and $\tH_n$ are similar to the ones of $h_n$ and $\th_n$ in $\sSym$, respectively. See \cite[Proposition 6.2]{FiLaPi19}. Note that $\NSym$ is the Hopf subalgebra generated by the $H_n$'s.

In this paper, we study in detail the Hopf algebra $\sNSym$. 

In Section \ref{021}, we recall the dotted compositions and characterize one of their lattice structure. It is shown in Section \ref{022} that $\sNSym$ can be graded via the degree of a dotted composition. In particular, we deduce that the dimension of its $n$th homogeneous component is $2\cdot3^{n-1}$. By means this graduation, we explicitly describe the antipode $S:\sNSym\to\sNSym$. In this section we also show two Hopf subalgebras of $\sNSym$ that can be realised as polynomial structures.

Recall that the classic noncommutative power sum function $P_n$ is a primitive element of $\NSym$, and so is of $\sNSym$. In Subsection \ref{023}, by direct study of the coalgebra structure of $\sNSym$, we deduce the existence of a second family of primitive elements $\{\Psi_n\mid n\geq0\}$, which are different of the analogous power functions in superspace.

Similar to the classic noncommutative case, in Section \ref{024} we define families of noncommutative elementary and power sum functions in superspace $\tS_n$ and $\tP_n$, respectively. In particular, we show that $S(\tH_n)=(-1)^{n+1}\tS_n$ and that each $\tP_n$ can be written as a sum of Lie brackets $[P_i,\Psi_j]$, where $i+j=n$.

The classic noncommutative Ribbon Schur functions were introduced in \cite{GKLLRT95} via quasi determinants. In Section \ref{027}, we extend these functions to superspace. In particular we provide a formula for the product of noncommutative Ribbon Schur functions in superspace, and we study their relation with another bases of $\sNSym$.

In Section \ref{030}, we conclude the paper by showing the interaction of $\sNSym$ with other Hopf algebras. First, we extend the identification of $\NSym$ as a substructure of the noncommutative version of the Connes--Kreimer Hopf algebra \cite{Fo02,Ho03,PreFo09,Hof09}, obtaining a Hopf algebra generated by planar rooted trees. Thus, the description of the coproduct of $\sNSym$ can be given in terms of the admissible cuts of these trees. Secondly, due to the dual relation between $\sNSym$ and $\sQSym$, we show that, for every dotted composition $\alpha$, $M_{\alpha}$ can be written in terms of the so--called \emph{fundamental quasisymmetric functions in superspace}. This implies that the set of these functions $\{L_{\alpha}\mid\alpha\text{ a dotted composition}\}$ is a basis of $\sQSym$. Furthermore, we provide formulas for the coproduct of these functions, which can be described by means a new type of diagrams that extend the classic Ribbon diagrams. Finally, we study the morphism $\pi:\sNSym\to\sSym$, introduced in \cite{FiLaPi19}, obtaining new relations for symmetric functions in superspace and a new basis for $\sSym$ by means the projection of noncommutative Ribbon Schur functions in superspace on $\sSym$.

In the present paper, every linear algebraic structure will be taken over the field of rational numbers $\field$. The word \emph{algebra} will mean associative algebra with unit and the word \emph{coalgebra} will mean coassociative coalgebra with counit.

For every positive integer $n$, we will denote by $[n]$ the set $\{1,\ldots,n\}$, and set $[n]_0=\{0\}\cup[n]$. As usual, the \emph{symmetric group} on $[n]$ will be denoted by $\Sym_n$.

\section{Hopf algebras}

Here, we will recall some notions about bialgebras and Hopf algebras.

A \emph{bialgebra} is a vector space $\mH$ endowed with an associative product $m:\mH\otimes\mH\to\mH$ together with a unit $u:\field\to \mH$, and a coassociative coproduct $\Delta:\mH\to\mH\otimes\mH$ together with a counit $\epsilon:\mH\to\field$, which are compatible, that is, the coproduct and the counit are algebra morphisms. The \emph{reduced coproduct} of $\mH$ is defined by $\bar{\Delta}(x)=\Delta(x)-x\otimes1-1\otimes x$ for all $h\in\mH$. We say that $\mH$ is said to be \emph{graded} if there are subspaces $\mH_0,\mH_1,\ldots$ of $\mH$, called \emph{homogeneous components}, satisfying the conditions\[\mH=\bigoplus_{n\geq0}\mH_n,\quad\mH_i\mH_j\subseteq\mH_{i+j},\,\,i,j\geq0,\quad\Delta(\mH_n)\subseteq\bigoplus_{i+j=n}\mH_i\otimes\mH_j.\]Moreover, $\mH$ is called \emph{connected} whenever $\mH_0\simeq\Q$.

A bialgebra $\mH$, as above, is said to be a \emph{Hopf algebra} if there is a linear map $S:\mH\to\mH$, called \emph{antipode}, satisfying the relations\[m\circ(S\otimes\id)\circ\Delta=u\circ\epsilon=m\circ(\id\otimes S)\circ\Delta.\]

Note that, as $\Delta:\mH\to\mH\otimes\mH$ must be an algebra morphism, the bialgebra structure of $\mH$ depends on the product of $\mH\otimes\mH$. In particular, if $\mH$ is graded, this product may be defined as $(m\otimes m)\circ(\id\otimes\tau\otimes\id)$, where $\tau:\mH\otimes\mH\to\mH\otimes\mH$ is the \emph{twist map} defined by $\tau(x\otimes y)=(-1)^{ab}(y\otimes x)$ with $x\in\mH_a$ and $y\in\mH_b$.

An element $x$ of a Hopf algebra $\mH$ is called \emph{primitive}, if $\Delta(x)=1\otimes x+x\otimes1$ or equivalently $\bar{\Delta}(x)=0$.

It is known that if $\mH$ is a connected graded bialgebra, then it is a Hopf algebra with antipode defined recursively by $S(1)=1$ and for $x$ in some homogeneous component,\[S(x)=-x-\sum S(x_{(1)})x_{(2)},\quad\text{where}\quad\bar{\Delta}(x)=x_{(1)}\otimes x_{(2)}\quad\text{(Sweedler's notation)}.\]

We conclude this subsection by describing the dual notion of a connected graded Hopf algebra ${\mH=\displaystyle\bigoplus_{n\geq0}\mH_n}$\\[-3.5mm]with finite dimensional homogeneous components. 

First, observe that $(\mH\otimes\mH)^*\simeq\mH^*\otimes\mH^*$, and that $\mH\otimes\mH$ is also graded as a vector space. More specifically,\[\mH\otimes\mH=\bigoplus_{n\geq0}(\mH\otimes\mH)_n,\quad\text{where}\quad(\mH\otimes \mH)_n=\bigoplus_{k=0}^n\mH_k\otimes\mH_{n-k}.\]
This, together with the coproduct $\Delta$ of $\mH$ induce a product $m_d$ on ${\displaystyle\mH^*=\bigoplus_{n\geq0}\mH_n^*}$ given by the composition\[m_d:\mH^*\otimes\mH^*\stackrel{\sim\,\,\,}{\to}(\mH\otimes\mH)^*\stackrel{\Delta^*}{\to}\mH^*,\quad\text{where}\quad\Delta^*(f)(x):=f\circ\Delta(x)\quad\text{with}\quad f\in(\mH\otimes\mH)^*,\,x\in\mH.\]Respectively, the product $m$ of $\mH$ induces a coproduct $\Delta_d$ of $\mH^*$ defined as follows\[\Delta_d:\mH^*\stackrel{m^*}{\to}(\mH\otimes\mH)^*\stackrel{\sim\,\,\,}{\to}\mH^*\otimes\mH^*,\quad\text{where}\quad m^*(f)(x\otimes y):=f\circ m(x\otimes y)\quad\text{with}\quad x\otimes y\in\mH\otimes\mH,\,f\in\mH^*.\]The structure $(\mH^*,m_d,\Delta_d)$ is called the \emph{dual Hopf algebra} of $\mH$.

For more details about Hopf algebras, see, for example \cite{GrRe14,PreFo09}.

\section{Dotted compositions}\label{021}

In what follows, $\N=\{1,2,\ldots\}$ and $\dot{\N}_0=\{\dot{0},\dot{1},\ldots\}$.

A \emph{dotted composition} is a tuple $\alpha=(\alpha_1,\ldots,\alpha_k)$ whose \emph{components}, denoted by $\alpha_1,\ldots,\alpha_k$, belong to the set $\N\cup\dot{\N}_0$. A component of $\alpha$ is called \emph{dotted} if it belongs to $\dot{\N}_0$. We call $k$ the \emph{length} of $\alpha$ and denoted it by $\ell(\alpha)$. The \emph{fermionic degree} of $\alpha$ is the number $\df(\alpha)$ of dotted components of it. The \emph{degree} of $\alpha$ is the number $|\alpha|$ obtained by adding its fermionic degree with the numerical values of its components.

For instance, below the six dotted compositions of degree two:\[(1,1)\kern+1.5em(\dot{0},1)\kern+1.5em(1,\dot{0})\kern+1.5em(\dot{0},\dot{0})\kern+1.5em(2)\kern+1.5em(\dot{1})\]

For a length $k$ dotted composition $\alpha$, we will denote by $\rg(\alpha)$ the \emph{rightmost component} of $\alpha$, that is, $\rg(\alpha)=\alpha_k$. Note that $\alpha_1$ is its \emph{leftmost component}. Also, if $\alpha_1=\cdots=\alpha_k$, we denote $\alpha$ by $(\alpha_1^k)$ instead.

Graphically, dotted compositions are represented by \emph{Ribbon diagrams} that extend the classical ones, such that $m\in\N\cup\dot{\N}$ is represented by $m$ boxes, and if $m$ is dotted, a bullet is added either on the left or on the right.\begin{figure}[H]\figone\end{figure}The diagram with all its dots on the left is called the \emph{left Ribbon diagram} of the dotted composition. Similarly, we define the \emph{right Ribbon diagram}. For instance, below the left and right Ribbon diagrams of $(\dot{2},3,\dot{0},1,2,\dot{2})$.
\begin{figure}[H]\figtwo\end{figure}

\begin{pro}\label{002}
For every positive integer $n$, the number of dotted compositions of degree $n$ is $2\cdot3^{n-1}$. See \cite[A025192]{OEIS}.
\end{pro}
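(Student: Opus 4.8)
The plan is to reduce the enumeration to a standard count of two-colored compositions. The first step is to determine, for each positive integer $v$, how many individual components of a dotted composition can contribute exactly $v$ to the degree. An undotted component $m\in\N$ contributes its value $m$, while a dotted component $\dot{m}\in\dot{\N}_0$ contributes $m+1$ (its numerical value plus the $1$ it adds to the fermionic degree $\df$). Hence each value $v\geq1$ is realised in exactly two ways: by the undotted $v$, or by the dotted $\dot{v-1}$. This packages a dotted composition of degree $n$ as precisely the datum of an ordinary composition of $n$ into positive parts together with a choice of one of two colours for each part.

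With this reformulation in hand, I would finish either by a generating function or by a short recursion. For the generating function, a single part carries $\sum_{v\geq1}2x^v=\frac{2x}{1-x}$, so summing over all lengths $k\geq0$ gives
\[
\sum_{k\geq0}\left(\frac{2x}{1-x}\right)^k=\frac{1-x}{1-3x},
\]
and extracting the coefficient of $x^n$ from $(1-x)\sum_{n\geq0}3^nx^n$ yields $3^n-3^{n-1}=2\cdot3^{n-1}$ for every $n\geq1$. The route I would actually record, since it avoids power series, is the recursion: letting $a_n$ be the number of dotted compositions of degree $n$ with $a_0=1$ for the empty tuple, and conditioning on the contribution $v\in\{1,\ldots,n\}$ of the rightmost component (two colour choices each), one gets $a_n=2\sum_{j=0}^{n-1}a_j$ for $n\geq1$. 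Subtracting the corresponding identity for $n-1$ gives $a_n=3a_{n-1}$ for $n\geq2$; together with $a_1=2a_0=2$, an easy induction delivers $a_n=2\cdot3^{n-1}$.

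There is no genuine obstacle here; the only point demanding care is the bookkeeping of the degree, namely verifying that the two-to-one packaging of part-values is an honest bijection — that distinct coloured compositions yield distinct dotted compositions and conversely — which is immediate from the definitions of numerical value and fermionic degree. As a sanity check, the six dotted compositions of degree two exhibited just before the statement agree with $2\cdot3^{2-1}=6$.
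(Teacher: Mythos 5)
Your proof is correct and rests on the same key observation as the paper's: each component contributes a positive amount to the degree, realised in exactly two ways (undotted $v$ or dotted $\dot{v-1}$), so dotted compositions of degree $n$ are two-coloured compositions of $n$. The only difference is the final tally — the paper sums $\sum_{k=1}^n\binom{n-1}{k-1}2^k$ over lengths while you use the recursion $a_n=3a_{n-1}$ (or the generating function $\tfrac{1-x}{1-3x}$) — which is an immaterial variation on the same argument.
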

\begin{proof}
By considering the degree of its components, every dotted composition of degree $n$ can be regarded as a composition of $n$, where its components can be of two kinds. Since for each $k\in[n]$, there are $\binom{n-1}{k-1}$ usual compositions of degree $n$ and length $k$, then there are $\binom{n-1}{k-1}2^k$ dotted compositions of degree $n$ and length $k$. Hence, the number of dotted compositions of degree $n$ is\[\sum_{k=1}^n\binom{n-1}{k-1}2^k=\sum_{k=0}^n\binom{n-1}{k}2^{k+1}=2\sum_{k=0}^n\binom{n-1}{k}2^k=2(1+2)^{n-1}=2\cdot3^{n-1}.\]
\end{proof}

Given two dotted compositions $\alpha$ and $\beta$, we denote by $\alpha\beta$ the dotted composition obtained by concatenating $\alpha$ with $\beta$. If $\beta=(x)$ for some $x\in\N\cup\dot{\N}$, we denote $\alpha\beta$ by $\alpha x$ instead. A \emph{convex partition} of a dotted composition $\alpha$ is a tuple of dotted compositions $(\beta_1,\ldots,\beta_k)$ such that $\alpha=\beta_1\cdots\beta_k$.

In order to describe one of the partial orders on dotted compositions introduced in \cite{FiLaPi19}, we first extend the usual sum of $\N_0=\{0,1,2,\ldots\}$ to $\N_0\cup\dot{\N}_0$ as follows:\[m\oplus n=r,\kern+1.5em m\oplus\dot{n}=\dot{r},\kern+1.5em\dot{m}\oplus n=\dot{r},\kern+1.5em\dot{m}\oplus\dot{n}=0,\quad\text{where}\quad r=m+n.\]For instance,\[1\oplus1=2\qquad\dot{0}\oplus1=\dot{1}\qquad1\oplus\dot{0}=\dot{1}\qquad\dot{0}\oplus\dot{0}=0\]
Note that the length two compositions of $m\in\N\cup\N_0$, with respect to $\oplus$, are given by cutting internally its left and right Ribbon diagrams. For instance, for $2$ and $\dot{2}$, we have:\[\figfou\]

\begin{rem}\label{015}
Note that $\oplus$ is not associative in general, indeed $(1\oplus\dot{1})\oplus\dot{1}=\dot{2}\oplus\dot{1}=0$ and on the other hand $1\oplus(\dot{1}\oplus\dot{1})=1\oplus0=1$. However, if at most one of $x,y,z$ belongs to $\dot{\N}$, then $(x\oplus y)\oplus z=x\oplus(y\oplus z)$. This property will be important to describe the partial order on dotted compositions.
\end{rem}

For $\alpha,\beta$ dotted compositions of degree $n$, we say that $\beta$ \emph{covers} $\alpha$, if $\beta$ is obtained by $\oplus$-summing two consecutive components of $\alpha$ that are not dotted at once. The partial order obtained by closuring this relation is simply denoted by $\preceq$ and gives a structure of poset to the collection of all dotted compositions of degree $n$. More generally, by abuse of language, if $\alpha\preceq\beta$, we will say that $\beta$ \emph{covers} $\alpha$ as well.

Note that two dotted compositions that are comparable respect to covering have the same degree and fermionic degree. Also, a dotted composition is maximal respect to covering if it has a unique component or its components are all dotted, that is, the length and fermionic degree coincide.

\begin{pro}\label{003}
For every positive integer $n$, the number of all dotted compositions covered by $(\dot{n})$ is $(n+2)2^{n-1}$. 
\end{pro}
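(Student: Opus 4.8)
The plan is to identify the order ideal below $(\dot n)$ with an explicit, easily-described set of dotted compositions, and then to enumerate that set. Since comparable dotted compositions share the same degree and the same fermionic degree (as observed just after the definition of $\preceq$), and since $(\dot n)$ has degree $n+1$ and fermionic degree $1$, every $\alpha$ with $\alpha\preceq(\dot n)$ must satisfy $\df(\alpha)=1$ and $|\alpha|=n+1$. Equivalently, such an $\alpha$ has exactly one dotted component and the numerical values of its components sum to $n$.

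For the converse I would show that every such $\alpha$ does reach $(\dot n)$ by repeated covering. The key observation is that $\alpha$ carries a single dotted component, so at no stage of a merging process can two adjacent dotted components appear; hence every $\oplus$-sum of two consecutive components is admissible, never falling into the forbidden case $\dot m\oplus\dot n=0$. Merging adjacent components one at a time therefore collapses $\alpha$ to a length-one composition, which remains dotted and retains the total numerical value $n$, i.e. it is $(\dot n)$. This establishes that $\{\alpha:\alpha\preceq(\dot n)\}$ is precisely the set of dotted compositions with $\df(\alpha)=1$ and $|\alpha|=n+1$.

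It then remains to count these. Each such $\alpha$ factors uniquely as $\gamma\,(\dot a)\,\delta$, where $\dot a$ (with $a\geq0$) is the unique dotted component and $\gamma,\delta$ are possibly empty ordinary compositions into positive parts, subject to $|\gamma|+a+|\delta|=n$. Ordinary compositions into positive parts have generating function $1+\sum_{m\geq1}2^{m-1}x^m=\frac{1-x}{1-2x}$ (the empty one contributing the constant term), while the dotted part contributes $\sum_{a\geq0}x^a=\frac{1}{1-x}$. Hence the generating function for the count is $\frac{1-x}{1-2x}\cdot\frac{1}{1-x}\cdot\frac{1-x}{1-2x}=\frac{1-x}{(1-2x)^2}$, and extracting the coefficient of $x^n$ via $\frac{1}{(1-2x)^2}=\sum_{n\geq0}(n+1)2^nx^n$ yields $(n+1)2^n-n\,2^{n-1}=(n+2)2^{n-1}$, as claimed.

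The main obstacle is the characterization rather than the enumeration: one must confirm that lying below $(\dot n)$ imposes no condition beyond having a single dot together with the correct total degree. The crucial and easily overlooked point is that a lone dotted part never obstructs a merge, so reachability is automatic. In the counting, the only subtlety is that the dotted part may be $\dot0$, which is exactly what the factor $\frac{1}{1-x}$ (rather than $\frac{x}{1-x}$) records and what makes the answer exceed a naive sum over ordinary compositions; a purely combinatorial argument splitting into the cases $a\geq1$ and $a=0$ and using that the lengths of all compositions of $n$ sum to $(n+1)2^{n-2}$ gives the same total $(n+2)2^{n-1}$.
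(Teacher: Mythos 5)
Your proof is correct and follows essentially the same route as the paper: both arguments reduce to identifying the order ideal below $(\dot{n})$ with the dotted compositions having a single dotted component and numerical total $n$, and then counting by the location and value of that dotted part; you enumerate with generating functions where the paper sums $(2k+1)\binom{n-1}{k-1}$ over the length $k$ of the underlying ordinary composition. Your explicit verification that a lone dotted component never obstructs a merge (so that reachability of $(\dot{n})$ is automatic) is a point the paper leaves implicit, and is a worthwhile addition.
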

\begin{proof}
Let $\alpha$ be a dotted composition satisfying $\alpha\preceq(\dot{n})$. Denote by $\bar{\alpha}$ the tuple obtained by removing the zeros and the dots from the components of $\alpha$. Note that $\alpha$ has a unique component in $\dot{\N}$ and that $\bar{\alpha}$ is a composition of $n$. For $k\in[n]$, we will count the number of dotted compositions $\alpha\preceq(\dot{n})$ such that $\ell(\bar{\alpha})=k$. 

Recall that $n$ has $\binom{n-1}{k-1}$ compositions of length $k$. Then, there are $(k+1)\binom{n-1}{k-1}$ dotted compositions $\alpha\preceq(\dot{n})$ such that $\dot{0}$ is a component of $\alpha$ and $\ell(\bar{\alpha})=k$. On the other hand, there are $k\binom{n-1}{k-1}$ dotted compositions $\alpha\preceq(\dot{n})$ in which the unique component belonging to $\dot{\N}$ is not $\dot{0}$. Hence, there are $(2k+1)\binom{n-1}{k-1}$ dotted compositions $\alpha\preceq(\dot{n})$ satisfying $\ell(\bar{\alpha})=k$. Therefore, the number of dotted compositions covered by $(\dot{n})$ is\[\textstyle{\sum_{k=1}^n(2k+1)\binom{n-1}{k-1}=(n+2)2^{n-1}.}\]
\end{proof}

For instance, there are eight dotted compositions covered by $(\dot{2})$.\begin{equation}\label{001}
\figthr
\end{equation}

For a dotted composition $\alpha$, denote by $\cv(\alpha)$ the set of all dotted compositions that covers $\alpha$, that is, $\cv(\alpha)=\{\beta\mid\alpha\preceq\beta\}$. Note that $\cv(\alpha)=\{\alpha\}$ if $\alpha$ is maximal respect to covering. Moreover, if $\alpha$ is a usual composition of degree $n$, then $\cv(\alpha)$ has a maximum given by $(n)$. 

If $\df(\alpha)\geq1$, we characterize the maximal elements in $\cv(\alpha)$ by means the following proposition.

\begin{pro}
Let $\alpha$ be a dotted composition with $s:=\df(\alpha)\geq1$, and let $m$ be a maximal element in $\cv(\alpha)$. Then, each component of $m$ is obtained by $\oplus$-summing the components of a dotted composition in some convex partition $(\beta_1,\ldots,\beta_s)$ of $\alpha$ whose elements have exactly one dotted component. More specifically, $m=(\hat{\beta}_1,\ldots,\hat{\beta}_s)$, where $\hat{\beta}_i$ is the $\oplus$-sum of all components of $\beta_i$.
\end{pro}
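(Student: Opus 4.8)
The plan is to first pin down the shape of a maximal element $m\in\cv(\alpha)$, and then to reconstruct the advertised convex partition from any chain of coverings witnessing $\alpha\preceq m$. For the first step, observe that $\cv(\alpha)$ is upward closed: if $m\preceq m'$ then $\alpha\preceq m'$, so $m'\in\cv(\alpha)$. Hence a maximal element of $\cv(\alpha)$ is in fact maximal in the whole poset of dotted compositions of degree $|\alpha|$. By the characterization of maximal dotted compositions recalled above (maximal iff it has a single component or all of its components are dotted), together with the fact that the fermionic degree is preserved by coverings, $m$ must satisfy $\df(m)=\df(\alpha)=s$ with all of its components dotted; in particular $\ell(m)=\df(m)=s$, say $m=(m_1,\ldots,m_s)$ with each $m_i$ dotted. (When $s=1$ the two alternatives of the characterization coincide.)

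Next I would recover the partition from the dynamics of the covering relation. Choose a chain $\alpha=\gamma^{(0)}\to\gamma^{(1)}\to\cdots\to\gamma^{(t)}=m$ in which each arrow is an elementary covering, i.e.\ replaces two adjacent components (not both dotted) by their $\oplus$-sum. Since every such step merges adjacent components, an induction on $t$ shows that the components of $m$ are indexed by the blocks of a \emph{convex} partition $(\beta_1,\ldots,\beta_s)$ of $\alpha$: the block $\beta_i$ collects, in order, precisely those components of $\alpha$ eventually merged into $m_i$, and $m_i$ is the value obtained by $\oplus$-summing the components of $\beta_i$ in the order dictated by the chain.

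The heart of the argument is then a conservation law for dots within a block. I would track, during the portion of the chain that assembles $\beta_i$, the number $d$ of current sub-components that are dotted. A legal merge is of type (dotted,\,undotted), (undotted,\,dotted) or (undotted,\,undotted), and in each case one checks directly from the definition of $\oplus$ that the merged component is dotted exactly when one of the two summands was; hence $d$ is unchanged, while the merge (dotted,\,dotted) is forbidden. Thus $d$ is invariant and equals $\df(\beta_i)$ throughout; since $\beta_i$ is ultimately merged to the single component $m_i$, at the end $d\le 1$, forcing $\df(\beta_i)\le 1$, and since $m_i$ is dotted we get $\df(\beta_i)=1$. Consequently each block contains exactly one dotted component, and by Remark \ref{015} (associativity of $\oplus$ whenever at most one summand is dotted) the $\oplus$-sum $\hat{\beta}_i$ is independent of the bracketing used along the chain; hence $m_i=\hat{\beta}_i$ and $m=(\hat{\beta}_1,\ldots,\hat{\beta}_s)$, as claimed. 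The main obstacle I anticipate is making the passage from an arbitrary covering chain to a well-defined convex partition fully rigorous while simultaneously controlling the non-associativity of $\oplus$; the invariant $d$ is what lets both issues be handled at once, since it guarantees that no step ever attempts to combine two dots.
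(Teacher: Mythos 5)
Your argument is correct. The paper's own ``proof'' is the single sentence that the claim follows directly from the definition of $\preceq$, so your write-up --- upward-closure of $\cv(\alpha)$ reducing to the characterization of maximal elements, the convex blocks read off from a covering chain, the invariance of the dot count under legal merges, and the appeal to the restricted associativity of $\oplus$ to make $\hat{\beta}_i$ well defined --- is simply a fully rigorous elaboration of that same definitional approach, with no gaps.
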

\begin{proof}
It follows directly from the definition of the partial order $\preceq$.
\end{proof}

For instance, for $\alpha=(\dot{0},1,2,\dot{3},1,\dot{2},\dot{4})$, the set $\cv(\alpha)$ contains the following six maximal elements:
\begin{figure}[H]
\figfiv
\end{figure}

\section{The Hopf structure of $\sNSym$}\label{022}

In this section, we describe the Hopf structure of noncommutative symmetric functions in superspace $\sNSym$, introduced in \cite{FiLaPi19}.

Recall that, as in the classic case, the set of monomial quasisymmetric functions in superspace $M_{\alpha}$, with $\alpha$ a dotted composition, form a basis of the Hopf algebra $\sQSym$.

The \emph{Hopf algebra of noncommutative symmetric functions in superspace} $\sNSym$ is the dual structure of the Hopf algebra of quasisymmetric functions in superspace, given by\[\langle H_{\alpha},M_{\beta}\rangle=\delta_{\alpha\beta},\quad\text{where}\quad\alpha,\beta\quad\text{are dotted compositions}.\]

As it was shown in \cite{FiLaPi19}, the algebra structure of $\sNSym$ is freely generated by the set $\{H_r\mid r\in\N\cup\dot{\N}\}$. The generators $H_r$ will be called \emph{noncommutative complete homogeneus symmetric function in superspace}. For a length $k$ dotted composition $\alpha$, we write $H_{\alpha}=H_{\alpha_1}\cdots H_{\alpha_k}$. The set $\{H_{\alpha}\mid \alpha\text{ is a dotted composition}\}$ is a basis of $\sNSym$.

As with $\sQSym$, the algebra $\sNSym$ can be graded by the fermionic degree. With respect to this graduation, $\sNSym\otimes\sNSym$ can be endowed with the following product:\begin{equation}\label{041}(f_1\otimes g_1)\cdot(f_2\otimes g_2)=(-1)^{\df(g_1)\df(f_2)}\,f_1\,f_2\otimes g_1g_2,\quad\text{where}\quad f_1,g_1,f_2,g_2\in\sNSym.\end{equation}

Since $\sNSym$ is a free algebra, it is enough to define the coproduct $\Delta$ on its generators and so extend it as an algebra morphism, with respect to the product in \ref{041}. Thus, we have:\[\Delta(H_r)=\sum_{p\oplus q=r}H_p\otimes H_q,\quad\text{where}\quad p,q,r\in\N_0\cup\dot{\N}_0\quad\text{and}\quad H_0=1.\]

As in \cite{FiLaPi19}, for every nonnegative integer $n$, we will write $H_{\dot{n}}$ by $\tH_n$ instead. Note that $\Delta(1)=1\otimes1$ and, for every $n\in\N$, we have the following:\begin{equation}\label{004}\Delta(H_n)=\sum_{k=0}^nH_k\otimes H_{n-k},\qquad\Delta(\tH_n)=\sum_{k=0}^n(\tH_k\otimes H_{n-k}+H_{n-k}\otimes\tH_k).\end{equation}

There is another graduation of $\sNSym$ given by the degree of dotted compositions. Thus, if $\alpha$ is a dotted composition, then $H_{\alpha}$ is an homogeneous element of degree $|\alpha|$. For every $n\in\N$, we denote by $\sNSym_n$ the subspace spanned by the elements $H_{\alpha}$ such that $\alpha$ is a dotted composition of degree $n$. By Proposition \ref{002}, $\dim(\sNSym_n)=2\cdot3^{n-1}$. Fixing $\sNSym_0=K$, we have that\[\sNSym=\bigoplus_{n\geq0}\sNSym_n,\]is a connected graded bialgebra, and then a Hopf algebra. Note that $\sNSym$ is cocommutative up a sign and that $\NSym$ is a Hopf subalgebra of it.

\begin{rem}
Note that if $\alpha$ is a dotted composition, the coproduct of $H_{\alpha}$ depends both on the fermionic degree of $\alpha$ and on the action of $\Delta$ on each $H_{\alpha_i}$. In particular, if $\df(\alpha)\in\{0,1\}$, the addends of $\Delta(H_{\alpha})$ are all positive. Otherwise, some of these addends can be negative. For instance, if $\alpha=(\dot{0},1,\dot{0})$, we have\figeig
In Subsection \ref{029}, we will give formulas to compute the coproduct of $H_{\alpha}$ in terms of trees.
\end{rem}

\subsection{Polynomial Hopf subalgebras}

Here, to show how acts the fermionic degree, we present two simple Hopf subalgebras of $\sNSym$, which can be released as Hopf algebras of polynomials in one variable.

The first one is the well known Hopf subalgebra of $\sNSym$ generated by $H_1$, which is isomorphic to the classic Hopf structure of polynomials in one variable $X$, by identifying $H_1$ with this variable. It is easy to check the following\[\Delta(H_{(1^n)})=\sum_{k=0}^n\binom{n}{k}H_{(1^{n-k})}\otimes H_{(1^k)},\quad\text{where}\quad H_{(1^0)}=1.\] 

The second substructure is the Hopf subalgebra of $\sNSym$ generated by $\tH_0$. To describe the coproduct of $H_{(\dot{0}^n)}=\tH_0\cdots\tH_0$, $n$ times, for $0\leq k\leq n$ we define the following number\[\tC(n,k)=\left\{\begin{array}{ll}
1&\text{if }k=0\text{ or }k=n,\\
0&\text{if }n\text{ is even and }k\text{ is odd},\\
\tC(n-1,k-1)+\tC(n-1,k)&\text{otherwise}.
\end{array}\right.\]
Note that if $n$ is odd and $k$ is even, then\begin{equation}\label{044}\tC(n,k)=\tC(n,k+1)=\tC(n-1,k).\end{equation}Diagrammatically,\[\begin{array}{lc}
n=0\qquad&{\red1}\\
n=1&1\quad1\\
n=2&{\red1}\quad0\quad{\red1}\\
n=3&1\quad1\quad1\quad1\\
n=4&{\red1}\quad0\quad{\red2}\quad0\quad{\red1}\\
n=5&1\quad1\quad2\quad2\quad1\quad1\\
n=6&{\red1}\quad0\quad{\red3}\quad0\quad{\red3}\quad0\quad{\red1}\\
n=7&1\quad1\quad3\quad3\quad3\quad3\quad1\quad1\\
n=8&{\red1}\quad0\quad{\red4}\quad0\quad{\red6}\quad0\quad{\red4}\quad0\quad{\red1}
\end{array}\kern+3em
\begin{array}{c}
{\red1}\\
{\red1}\quad{\red1}\\
{\red1}\quad{\red2}\quad{\red1}\\
{\red1}\quad{\red3}\quad{\red3}\quad{\red1}\\
{\red1}\quad{\red4}\quad{\red6}\quad{\red4}\quad{\red1}
\end{array}\]

\begin{pro}\label{045}
We have $\tC(2n,2k)={\displaystyle\binom{n}{k}}$ for all $0\leq k\leq n$. In consequence, we obtain\[\tC(2n-1,2k)=\binom{n-1}{k}\quad\text{and}\quad\tC(2n-1,2k-1)=\binom{n-1}{k-1}.\]
\end{pro}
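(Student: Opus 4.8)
The plan is to prove the main identity $\tC(2n,2k)=\binom{n}{k}$ by induction on $n$, and then to read off the two consequences directly from it by a single application of the auxiliary relation \eqref{044}. For the base case $n=0$ we simply have $\tC(0,0)=1=\binom{0}{0}$.

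For the inductive step, assume $\tC(2(n-1),2j)=\binom{n-1}{j}$ for all $0\le j\le n-1$. The boundary values are immediate from the first line of the definition of $\tC$, namely $\tC(2n,0)=\tC(2n,2n)=1$. For $0<k<n$, the row index $2n$ is even and the column index $2k$ is even with $0<2k<2n$, so the recursive (third) line of the definition applies and gives
\[
\tC(2n,2k)=\tC(2n-1,2k-1)+\tC(2n-1,2k).
\]
Both terms on the right lie in the odd row $2n-1$, and the crux is to rewrite them as entries of the even row $2n-2$, where the inductive hypothesis is available.

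First, since $2n-1$ is odd and $2k$ is even, relation \eqref{044} yields $\tC(2n-1,2k)=\tC(2n-2,2k)=\binom{n-1}{k}$ by the inductive hypothesis (legitimate since $k\le n-1$). Second, writing $2k-1=(2k-2)+1$ with $2k-2=2(k-1)$ even and $2n-1$ odd, relation \eqref{044} applied to the pair $(2n-1,2k-2)$ gives $\tC(2n-1,2k-2)=\tC(2n-1,2k-1)$, whence $\tC(2n-1,2k-1)=\tC(2n-2,2k-2)=\binom{n-1}{k-1}$, again by the inductive hypothesis (legitimate since $k\ge 1$). Substituting these two values and invoking Pascal's rule,
\[
\tC(2n,2k)=\binom{n-1}{k-1}+\binom{n-1}{k}=\binom{n}{k},
\]
which closes the induction.

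Finally, the two stated consequences require no further induction. Since $2n-1$ is odd and $2k$ is even, \eqref{044} gives $\tC(2n-1,2k)=\tC(2n-2,2k)=\binom{n-1}{k}$ by the identity just proved; and applying \eqref{044} to the pair $(2n-1,2k-2)$ as above gives $\tC(2n-1,2k-1)=\tC(2n-1,2k-2)=\tC(2n-2,2k-2)=\binom{n-1}{k-1}$. I expect the only delicate point to be the bookkeeping of parities when invoking \eqref{044}: at each use one must verify that the row index is odd and the column index is even, and keep track of the admissible range of $k$ so that the inductive hypothesis is applied only to genuine even-row entries.
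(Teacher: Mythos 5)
Your proof is correct and follows essentially the same route as the paper's: induction on $n$, using the recursive clause of the definition of $\tC$ together with relation \ref{044} to push both terms from row $2n-1$ down to row $2n-2$, and then Pascal's rule. The only difference is that you treat the boundary cases $k=0$, $k=n$ and the admissible range for the inductive hypothesis explicitly, which the paper's displayed computation glosses over.
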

\begin{proof}
We proceed by induction on $n$. If $n=0$, it is clear because $k=0$ as well. If $n>0$, by applying \ref{044} and inductive hypothesis, we have\[\begin{array}{rcl}
\tC(2n,2k)&=&\tC(2n-1,2k-1)+\tC(2n-1,2k),\\
&=&\tC(2n-2,2k-2)+\tC(2n-2,2k),\\
&=&\tC(2(n-1),2(k-1))+\tC(2(n-1),2k),\\[1mm]
&=&\binom{n-1}{k-1}+\binom{n-1}{k},\\[1.5mm]
&=&\binom{n}{k}.
\end{array}\]
\end{proof}

\begin{pro}\label{046}
For $0\leq k\leq n$, we have ${\displaystyle\Delta(H_{(\dot{0}^n)})=\sum_{k=0}^n\tC(n,k)\,H_{(\dot{0}^{n-k})}\otimes H_{(\dot{0}^k)}}$, where $H_{(\dot{0}^0)}=1$.
\end{pro}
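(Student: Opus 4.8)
The plan is to induct on $n$. Since $H_{(\dot0^n)}=\tH_0^{\,n}$ and $\tH_0=H_{\dot0}$ is primitive (the $n=0$ case of \ref{004} gives $\Delta(\tH_0)=\tH_0\otimes1+1\otimes\tH_0$), and since $\Delta$ is an algebra morphism for the twisted product \ref{041}, I would peel off one factor and write
\[
\Delta(H_{(\dot0^n)})=\Delta(H_{(\dot0^{n-1})})\cdot\bigl(\tH_0\otimes1+1\otimes\tH_0\bigr),
\]
the product being taken in $\sNSym\otimes\sNSym$. The cases $n=0,1$ are immediate from $\Delta(1)=1\otimes1$ and \ref{004}.

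Feeding in the inductive hypothesis $\Delta(H_{(\dot0^{n-1})})=\sum_j\tC(n-1,j)\,H_{(\dot0^{n-1-j})}\otimes H_{(\dot0^j)}$, I would evaluate the two resulting products with \ref{041}. Multiplying $H_{(\dot0^{n-1-j})}\otimes H_{(\dot0^j)}$ on the right by $\tH_0\otimes1$ forces $\tH_0$ past a right-hand factor of fermionic degree $j$, producing the sign $(-1)^j$ and the term $(-1)^j H_{(\dot0^{n-j})}\otimes H_{(\dot0^j)}$; multiplying instead by $1\otimes\tH_0$ costs no sign and gives $H_{(\dot0^{n-1-j})}\otimes H_{(\dot0^{j+1})}$. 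Collecting the coefficient of $H_{(\dot0^{n-k})}\otimes H_{(\dot0^k)}$, to which the first product contributes at $j=k$ and the second at $j=k-1$, yields
\[
\Delta(H_{(\dot0^n)})=\sum_{k=0}^n\bigl(\tC(n-1,k-1)+(-1)^k\tC(n-1,k)\bigr)H_{(\dot0^{n-k})}\otimes H_{(\dot0^k)}.
\]
Thus it remains to verify the signed Pascal identity $\tC(n,k)=\tC(n-1,k-1)+(-1)^k\tC(n-1,k)$.

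The last step is to check this identity against the three-case definition of $\tC$, and here the only real work appears. For $k$ even the sign is trivial and the identity is exactly the Pascal recursion of the ``otherwise'' branch, while the boundary $k\in\{0,n\}$ follows since the missing summand vanishes and the boundary clause $\tC(n,0)=\tC(n,n)=1$ applies. For $k$ odd I would split on the parity of $n$: if $n$ is odd then $n-1$ is even, so $\tC(n-1,k)=0$ and the identity collapses to $\tC(n,k)=\tC(n-1,k-1)$, again the ``otherwise'' branch; if $n$ is even I must instead show $\tC(n,k)=0$, that is $\tC(n-1,k-1)=\tC(n-1,k)$, which is exactly \ref{044} applied to the odd index $n-1$ with even $k-1$. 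Reconciling the alternating-sign recursion with the piecewise definition through \ref{044} is the crux; everything else is bookkeeping of the signs coming from \ref{041}.
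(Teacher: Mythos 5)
Your proof is correct and follows essentially the same route as the paper's: induct on $n$, peel off one primitive factor $\tH_0$, track the sign $(-1)^j$ produced by the twisted product \ref{041}, and reduce to the signed Pascal identity $\tC(n,k)=\tC(n-1,k-1)+(-1)^k\tC(n-1,k)$, which both you and the paper verify by the same parity case analysis via \ref{044}. The only difference is cosmetic (you organize the cases by the parity of $k$ first, the paper by the parity of $n$).
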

\begin{proof}
We proceed by induction on $n$. The result is obvious if $n\in\{1,2\}$. If $n\geq3$, assume that claim is true for smaller values. By inductive hypothesis, we have\[\begin{array}{rcl}
\Delta(H_{(\dot{0}^n)})&=&\Delta(H_{(\dot{0}^{n-1})})\Delta(H_{\dot{0}}),\\[2mm]
&=&{\displaystyle\left(H_{(\dot{0}^{n-1})}\otimes1+\sum_{k=1}^{n-1}\tC(n-1,k)\,H_{(\dot{0}^{n-1-k})}\otimes H_{(\dot{0}^k)}\right)\left(H_{\dot{0}}\otimes1+1\otimes H_{\dot{0}}\right)},\\[1.5mm]
&=&{\displaystyle H_{(\dot{0}^n)}\otimes1+H_{(\dot{0}^{n-1})}\otimes H_{\dot{0}}+\sum_{k=1}^{n-1}(-1)^k\tC(n-1,k)\,H_{(\dot{0}^{n-k})}\otimes H_{(\dot{0}^k)}}\\[-2mm]
&&{\displaystyle+\sum_{k=1}^{n-1}\tC(n-1,k)\,H_{(\dot{0}^{n-(k+1)})}\otimes H_{(\dot{0}^{k+1})}},\\[0.6cm]
&=&{\displaystyle H_{(\dot{0}^n)}\otimes1+1\otimes H_{(\dot{0}^n)}+(1-\tC(n-1,1))H_{(\dot{0}^{n-1})}\otimes H_{\dot{0}}}\\[2mm]
&&{\displaystyle+\sum_{k=2}^{n-1}\left[\tC(n-1,k-1)+(-1)^k\tC(n-1,k)\right]H_{(\dot{0}^{n-k})}\otimes H_{(\dot{0}^k)}}.
\end{array}\]If $n$ is even, we have $\tC(n-1,1)=\tC(n,0)=1$, and so $1-\tC(n-1,1)=0$. For $k$ odd, we have $\tC(n-1,k-1)=\tC(n-1,k)$, then $\tC(n-1,k-1)+(-1)^k\tC(n-1,k)=0$, because $(-1)^k=-1$. For $k$ even, we have $\tC(n-1,k-1)+(-1)^k\tC(n-1,k)=\tC(n-1,k-1)+\tC(n-1,k)=\tC(n,k)$.

Now, assume that $n$ is odd. If $k$ is even, by using \ref{044}, we have $\tC(n-1,k-1)+(-1)^k\tC(n-1,k)=\tC(n-1,k)=\tC(n,k)$. Finally, if $k$ is odd, we have $\tC(n-1,k-1)+(-1)^k\tC(n-1,k)=\tC(n-1,k-1)=\tC(n,k)$.
\end{proof}

By using Proposition \ref{046} and Proposition \ref{045}, we obtain
\begin{itemize}
\item${\displaystyle\Delta(H_{(\dot{0}^{2n})})=\sum_{k=0}^n\binom{n}{k}H_{(\dot{0}^{2n-2k})}\otimes H_{(\dot{0}^{2k})}}$.
\item${\displaystyle\Delta(H_{(\dot{0}^{2n-1})})=\sum_{k=0}^{n-1}\binom{n-1}{k}H_{(\dot{0}^{2n-1-2k})}\otimes H_{(\dot{0}^{2k})}+\sum_{k=1}^n\binom{n-1}{k-1}H_{(\dot{0}^{2n-2k})}\otimes H_{(\dot{0}^{2k-1})}}$.
\end{itemize}

By identifying $\tH_0$ with the variable $X$, we obtain a new structure on the set $\mP$ of polynomials in one variable. In particular, the fermionic degree inherits an algebraic structure on $\mP\otimes\mP$ given by\[(X^{m_1}\otimes X^{m_2})\cdot(X^{n_1}\otimes X^{n_2})=(-1)^{m_2n_1}X^{m_1+n_1}\otimes X^{m_2+n_2}.\]

Note that the algebraic structures presented above differ in their coalgebraic structure. For instance, the coproducts of $X^4$, with respect to the identifications above, are given respectively by\[X^4\otimes1+4X^3\otimes X+6X^2\otimes X^2+4X\otimes X^3+1\otimes X^4\quad\text{and}\quad X^4\otimes1+2X^2\otimes X^2+1\otimes X^4.\]

\subsection{Antipode}

Recall that the antipode of a connected graded bialgebra $\mH$ is defined recursively by conditions $S(1)=1$ and\[S(x)=-\left(x+\sum S(x_{(1)})x_{(2)}\right),\quad\text{where}\quad x\in\mH\quad\text{and}\quad\bar{\Delta}(x)=\sum x_{(1)}\otimes x_{(2)}.\]

For a dotted composition $\alpha$, denote by $\mathrm{rev}(\alpha)$ the dotted composition obtained by reversing its components.

It is well known that \cite[Remark 5.3.4]{GrRe14}, for a usual composition $\alpha$, we have\begin{equation}\label{042}S(H_{\alpha})=\sum_{\beta\preceq\mathrm{rev}(\alpha)}(-1)^{\ell(\beta)}H_{\beta}.\end{equation}
A similar result can be generalized for dotted compositions up a sign that depends on the fermionic degree. Previously, we determine the antipode of generators indexed by dotted compositions of length one.

\begin{pro}\label{000}
For every integer $n\geq0$, we have ${\displaystyle S(\tH_n)=\sum_{\alpha\preceq(\dot{n})}(-1)^{\ell(\alpha)}H_{\alpha}}$.
\end{pro}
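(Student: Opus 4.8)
The plan is to induct on $n$, using the standard antipode recursion for a connected graded bialgebra together with the explicit coproduct \ref{004}. For the base case $n=0$, the element $\tH_0$ is primitive, so $S(\tH_0)=-\tH_0$; since $(\dot 0)$ is the only dotted composition with $(\dot 0)\preceq(\dot 0)$ and it has length one, the claimed right-hand side is $(-1)^{1}H_{(\dot 0)}=-\tH_0$, so the two sides agree.

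For the inductive step I would first read off the reduced coproduct from \ref{004}: deleting the two primitive terms $\tH_n\otimes 1$ and $1\otimes\tH_n$ leaves $\bar\Delta(\tH_n)=\sum_{k=0}^{n-1}\tH_k\otimes H_{n-k}+\sum_{k=0}^{n-1}H_{n-k}\otimes\tH_k$. Feeding this into $S(x)=-x-\sum S(x_{(1)})x_{(2)}$ (the multiplication here being the ordinary free-algebra product on $\sNSym$, so no sign from \ref{041} enters the antipode equation) gives
\[S(\tH_n)=-\tH_n-\sum_{k=0}^{n-1}S(\tH_k)\,H_{n-k}-\sum_{k=0}^{n-1}S(H_{n-k})\,\tH_k.\]
I would then substitute the inductive hypothesis $S(\tH_k)=\sum_{\gamma\preceq(\dot k)}(-1)^{\ell(\gamma)}H_\gamma$ (valid since $k<n$) into the first sum, and the known formula \ref{042}, namely $S(H_{n-k})=\sum_{\delta\preceq(n-k)}(-1)^{\ell(\delta)}H_\delta$, into the second. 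Since $H_\gamma H_{n-k}=H_{\gamma(n-k)}$ and $H_\delta\tH_k=H_{\delta\dot k}$ are again basis elements whose length is one greater than that of $\gamma$, respectively $\delta$, each leading minus sign is absorbed: $-(-1)^{\ell(\gamma)}=(-1)^{\ell(\gamma(n-k))}$ and likewise for $\delta\dot k$. Thus every term of $S(\tH_n)$ already has the target shape $(-1)^{\ell(\beta)}H_\beta$.

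It then remains to verify that the indices $\beta$ so produced range exactly once over $\{\alpha:\alpha\preceq(\dot n)\}$. Here I would use that covering preserves both degree and fermionic degree, together with the characterization of the maximal elements of $\cv(\alpha)$ given above, to reduce to the clean statement that $\alpha\preceq(\dot n)$ if and only if $\df(\alpha)=1$ and $|\alpha|=n+1$ (with $(\dot n)$ the unique maximal such composition, as in Proposition \ref{003}). I would classify these $\alpha$ by their rightmost component $\rg(\alpha)$: the single length-one composition $(\dot n)$ accounts for the term $-\tH_n$; those with $\rg(\alpha)\in\N$ are uniquely of the form $\gamma(n-k)$ with $\gamma\preceq(\dot k)$ and $0\le k\le n-1$, matching the first sum; and those of length at least two with $\rg(\alpha)$ dotted are uniquely of the form $\delta\dot k$ with $\delta\preceq(n-k)$ and $0\le k\le n-1$, matching the second sum. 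Tracking degrees shows the index $k$ is forced in each case, so the three families are disjoint and exhaustive, which closes the induction.

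The main obstacle I expect is precisely this last combinatorial bookkeeping: one must confirm that reading off $\rg(\alpha)$ sets up a bijection with no double counting — in particular that the degree conventions force $k=n-\rg(\alpha)$ in the undotted case and $|\delta|=n-k$ in the dotted case — and that the sign $(-1)^{\ell}$ transports correctly through concatenation. By contrast, the algebraic part (extracting $\bar\Delta(\tH_n)$ and substituting into the recursion) is routine once \ref{004} and \ref{042} are in hand.
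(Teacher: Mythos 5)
Your proposal is correct and follows essentially the same route as the paper's proof: induction on $n$, the antipode recursion applied to $\bar\Delta(\tH_n)$ read off from \ref{004}, substitution of the inductive hypothesis and of \ref{042}, and then the observation that every $\alpha\preceq(\dot n)$ other than $(\dot n)$ itself decomposes uniquely as $\gamma(n-k)$ with $\gamma\preceq(\dot k)$ or as $\delta\dot k$ with $\delta\preceq(n-k)$ according to its rightmost component. Your treatment of the final bijection is in fact slightly more explicit than the paper's, which simply asserts the classification and concludes.
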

\begin{proof}
We proceed by induction on $n$. Since $\bar{\Delta}(\tH_0)=0$, $S(\tH_0)=-\tH_0$. Consider $n\geq1$ and assume the claim is true for all $k<n$. Since $\bar{\Delta}(\tH_n)=\sum_{k=0}^{n-1}(\tH_k\otimes H_{n-k}+H_{n-k}\otimes\tH_k)$, we obtain the following formula\[S(\tH_n)=-\left(\tH_n+\sum_{k=0}^{n-1}(S(\tH_k)H_{n-k}+S(H_{n-k})\tH_k)\right).\]By rewriting ${\displaystyle\sum_{k=0}^{n-1}S(\tH_k)H_{n-k}}$ as ${\displaystyle\sum_{k=1}^nS(\tH_{n-k})H_k}$ and applying the inductive hypothesis, we obtain\[S(\tH_n)=-\tH_n+\sum_{k=1}^n\sum_{\beta\preceq(\dot{m})}(-1)^{\ell(\beta)+1}H_{\beta}H_k+\sum_{k=0}^{n-1}\sum_{\beta\preceq(m)}(-1)^{\ell(\beta)+1}H_{\beta}\tH_k,\]where $m=n-k$. On the other hand, if $\alpha\preceq(\dot{n})$, then either $H_{\alpha}=H_{\beta}\tH_k$ for some $k\in[n]$ and $\beta\preceq(m)$, or $H_{\alpha}=H_{\beta}H_k$ for some $k\in[n]_0$ and $\beta\preceq(\dot{m})$. Hence, the result follows.
\end{proof}

For instance, from the lattice in \ref{001}, we obtain\fignin

\begin{pro}\label{043}
For a dotted composition $\alpha$ be a dotted composition, we have\[S(H_{\alpha})=(-1)^{\binom{\df(\alpha)}{2}}\sum_{\beta\preceq\mathrm{rev}(\alpha)}(-1)^{\ell(\beta)}H_{\beta}.\]
\end{pro}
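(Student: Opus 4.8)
The plan is to exploit the fact that $\sNSym$, graded by fermionic degree and equipped with the sign-twisted product \eqref{041} on $\sNSym\otimes\sNSym$, is a Hopf algebra in the symmetric monoidal category of super vector spaces, so that its antipode is a graded algebra anti-homomorphism. Concretely, the first step is to establish, for fermionic-homogeneous elements $x,y$ (with $\df$ now denoting the fermionic degree of an element),
\[
S(xy)=(-1)^{\df(x)\,\df(y)}\,S(y)\,S(x).
\]
I would obtain this either by invoking the general theory of Hopf algebras in braided categories, or by reproducing the classical convolution argument that the antipode is an anti-homomorphism with the twist map $\tau$ inserted at the appropriate place; the only thing to verify is that the Koszul sign produced by $\tau$ is exactly the sign appearing in \eqref{041}, which it is by construction.

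Granting this, writing $\alpha=(\alpha_1,\dots,\alpha_k)$ and iterating the anti-homomorphism property on $H_\alpha=H_{\alpha_1}\cdots H_{\alpha_k}$ yields
\[
S(H_\alpha)=(-1)^{\sum_{i<j}\df(\alpha_i)\df(\alpha_j)}\,S(H_{\alpha_k})\cdots S(H_{\alpha_1}).
\]
Since each $\df(\alpha_i)\in\{0,1\}$, the exponent counts the pairs $i<j$ for which both $\alpha_i$ and $\alpha_j$ are dotted, which is precisely $\binom{\df(\alpha)}{2}$; this accounts for the global sign in the statement.

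For the remaining factor I plug in the known antipodes of the one-letter generators: for a non-dotted letter this is \eqref{042} applied to the length-one composition $(\alpha_i)$, and for a dotted letter it is Proposition \ref{000}, both of which read uniformly $S(H_{\alpha_i})=\sum_{\gamma_i\preceq(\alpha_i)}(-1)^{\ell(\gamma_i)}H_{\gamma_i}$. Expanding the product, and using $H_{\gamma_k}\cdots H_{\gamma_1}=H_{\gamma_k\cdots\gamma_1}$ together with additivity of length, reduces everything to one combinatorial identity: the concatenation map $(\gamma_k,\dots,\gamma_1)\mapsto\gamma_k\cdots\gamma_1$ is a bijection from the tuples with $\gamma_i\preceq(\alpha_i)$ onto the set of $\beta\preceq\mathrm{rev}(\alpha)=(\alpha_k,\dots,\alpha_1)$.

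The delicate point, and the one I expect to require the most care, is this last bijection, i.e.\ that the refinement order is compatible with concatenation. I would prove it by tracking the covering moves: any chain realizing $\beta\preceq(\alpha_k,\dots,\alpha_1)$ never merges across the $k-1$ gaps separating the target components, since a gap once merged cannot reappear; hence these gaps cut $\beta$ into consecutive blocks $\gamma_k,\dots,\gamma_1$ with $\gamma_i\preceq(\alpha_i)$, and conversely blocks with $\gamma_i\preceq(\alpha_i)$ concatenate to a refinement of $(\alpha_k,\dots,\alpha_1)$. Here I must use that each $(\alpha_i)$ has fermionic degree at most $1$, so each block $\gamma_i$ contains at most one dotted component and, by Remark \ref{015}, the $\oplus$-sums inside a block are unambiguous despite the non-associativity of $\oplus$; this is exactly what makes the decomposition well defined. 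Combining the global sign with this bijection gives $S(H_\alpha)=(-1)^{\binom{\df(\alpha)}{2}}\sum_{\beta\preceq\mathrm{rev}(\alpha)}(-1)^{\ell(\beta)}H_\beta$, as claimed.
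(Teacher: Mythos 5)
Your proposal is correct and follows essentially the same route as the paper: use the signed anti-homomorphism property of the antipode to reduce to the one-letter antipodes (equation \ref{042} and Proposition \ref{000}), identify the resulting sign $\sum_{i<j}\df(\alpha_i)\df(\alpha_j)=\binom{\df(\alpha)}{2}$, and recognize the product of the one-letter sums as the sum over $\beta\preceq\mathrm{rev}(\alpha)$. The only difference is that you spell out the concatenation/refinement bijection (which the paper asserts without comment), and your observation that each block contains at most one dotted component, making Remark \ref{015} applicable, is a worthwhile detail.
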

\begin{proof}
Let $\alpha$ be a dotted composition with $k=\ell(\alpha)$, and, for each $i\in[k]$, let $a_i$ be the fermionic degree of $(\alpha_i)$. Note that $a_i=1$ if $\alpha_i\in\dot{\N}$ and $a_i=0$ otherwise. If $k=1$, the result follows from \ref{042} or Proposition \ref{000}. Now, assume that $k\geq2$. Since the antipode is a signed antihomomorphism, Proposition \ref{000} implies that\[S(H_{\alpha})=(-1)^aS(H_{\alpha_k})\cdots S(H_{\alpha_1})=(-1)^a\sum_{\beta\preceq\mathrm{rev}(\alpha)}(-1)^{\ell(\beta)}H_{\beta},\quad\text{where}\quad a=\sum_{i=1}^{k-1}a_i(a_{i+1}+\cdots+a_k).\]Now, we will show that $a=\binom{\df(\alpha)}{2}$. Let $\alpha_{i_1},\ldots,\alpha_{i_s}$, with $i_1<\cdots<i_s$, be the dotted components of $\alpha$, that is, $s=\df(\alpha)$. We have $a_{i_1}=\cdots=a_{i_s}=1$ and $a_i=0$ otherwise. This implies that\[a=\sum_{j=1}^{s-1}a_{i_j}(a_{i_{j+1}}+\cdots+a_{i_s})=\sum_{j=1}^{s-1}(s-j)=s(s-1)-\frac{(s-1)s}{2}=\frac{(s-1)s}{2}=\binom{s}{2}=\binom{\df(\alpha)}{2}.\]
\end{proof}
Note that Proposition \ref{043} is dual to \cite[Proposition 5.10]{FiLaPi19} for the antipode of $M_{\alpha}$ in $\sQSym$.

\subsection{Primitive elements}\label{023}

In this section we study the primitive part of $\sNSym$. Recall that the classic noncommutative power sum functions $P_n$ are defined recursively by $P_1=H_1$ and\begin{equation}\label{020}\sum_{k=0}^{n-1}H_nP_{n-k}=nH_n,\quad n\geq1.\end{equation}It is known that $P_n$ is a primitive element of the Hopf algebra $\NSym$, and that the primitive part of $\NSym$ is the free Lie algebra generated by these functions \cite[Subsection 3.1]{GKLLRT95}. Note that, as $\NSym$ is a Hopf subalgebra of $\sNSym$, then $P_n$ is a primitive element of $\sNSym$ as well.

Here, we determined another primitive elements of $\sNSym$, which are defined recursively by $\Psi_0=\tH_0$ and\begin{equation}\label{019}\Psi_n=\tH_n-\sum_{k=0}^{n-1}H_{n-k}\Psi_k,\quad n\geq1.\end{equation}

\begin{pro}\label{012}
For every $n\geq0$, $\Psi_n$ is a primitive of $\sNSym$.
\end{pro}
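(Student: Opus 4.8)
The plan is to prove $\bar{\Delta}(\Psi_n)=0$ for all $n$, and the cleanest route is to encode the whole family in generating functions. Put $H(t)=\sum_{n\geq0}H_nt^n$, $\tH(t)=\sum_{n\geq0}\tH_nt^n$ and $\Psi(t)=\sum_{n\geq0}\Psi_nt^n$ in $\sNSym[[t]]$, with $t$ central and $H_0=1$. The recursion \ref{019} is exactly the assertion $\tH_n=\sum_{k=0}^nH_{n-k}\Psi_k$ (the $k=n$ summand being $H_0\Psi_n=\Psi_n$), that is, $\tH(t)=H(t)\Psi(t)$; since $H(t)$ has constant term $1$ it is invertible in $\sNSym[[t]]$, so $\Psi(t)=H(t)^{-1}\tH(t)$.

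First I would translate the coproduct formulas \ref{004} into the series identities $\Delta(H(t))=H(t)\otimes H(t)$ and $\Delta(\tH(t))=\tH(t)\otimes H(t)+H(t)\otimes\tH(t)$, applying $\Delta$ coefficientwise. Because $\Delta$ is an algebra morphism for the twisted product \ref{041} and $H(t)$ is even (fermionic degree $0$), the equality $\Delta(H(t))\,\Delta(H(t)^{-1})=1\otimes1$ forces $\Delta(H(t)^{-1})=(H(t)\otimes H(t))^{-1}=H(t)^{-1}\otimes H(t)^{-1}$; the inverse factorizes because $H(t)\otimes1$ and $1\otimes H(t)$ commute and carry no sign.

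Then I would compute directly in $(\sNSym\otimes\sNSym)[[t]]$:
\[
\Delta(\Psi(t))=\Delta(H(t)^{-1})\,\Delta(\tH(t))=\bigl(H(t)^{-1}\otimes H(t)^{-1}\bigr)\bigl(\tH(t)\otimes H(t)+H(t)\otimes\tH(t)\bigr).
\]
In both products the left-hand tensor factor has right slot $H(t)^{-1}$, which is even, so the sign in \ref{041} is $+1$; the products therefore collapse to $H(t)^{-1}\tH(t)\otimes1+1\otimes H(t)^{-1}\tH(t)=\Psi(t)\otimes1+1\otimes\Psi(t)$. Comparing coefficients of $t^n$ gives $\Delta(\Psi_n)=\Psi_n\otimes1+1\otimes\Psi_n$, which is primitivity.

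If a generating-function argument is to be avoided, the identical cancellation can be organized as an induction on $n$: with $\Psi_0,\dots,\Psi_{n-1}$ primitive one expands $\Delta(\Psi_n)=\Delta(\tH_n)-\sum_{k=0}^{n-1}\Delta(H_{n-k})\,\Delta(\Psi_k)$, substitutes $\tH_j=\sum_{l=0}^jH_{j-l}\Psi_l$ into $\Delta(\tH_n)$, and reindexes; the $k=n$ contributions produce $\Psi_n\otimes1+1\otimes\Psi_n$, two of the remaining double sums cancel outright and the other two after the substitution $i\mapsto n-k-i$. The only delicate point—and the single place the computation could fail—is the tracking of the signs coming from \ref{041}; here I expect them all to be trivial because every $H_i$ is even, so that no two odd factors are ever interchanged, but this should be verified product by product rather than taken for granted.
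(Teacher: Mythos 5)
Your generating-function argument is correct, and it is organized genuinely differently from the paper's proof, which proceeds by direct induction on $n$: assuming $\Psi_0,\dots,\Psi_{n-1}$ primitive, the paper expands $\Delta(\tH_n)-\sum_{k=0}^{n-1}\Delta(H_{n-k})\Delta(\Psi_k)$ and regroups the resulting double sums against the defining recursion \ref{019} to recover $\Psi_n\otimes1+1\otimes\Psi_n$ --- essentially the fallback computation you sketch at the end. Your main route instead packages the recursion as $\tH(t)=H(t)\Psi(t)$, i.e.\ $\Psi(t)=H(t)^{-1}\tH(t)$, and reads primitivity off the series identities $\Delta(H(t))=H(t)\otimes H(t)$ and $\Delta(\tH(t))=\tH(t)\otimes H(t)+H(t)\otimes\tH(t)$. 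The two points you flag as delicate are indeed the only ones, and both check out: every coefficient of $H(t)^{\pm1}$ is a polynomial in the $H_i$ and hence has fermionic degree $0$, so the sign in \ref{041} is $+1$ in each product you form, and $(H(t)\otimes H(t))^{-1}=H(t)^{-1}\otimes H(t)^{-1}$ because the twisted product is associative and the factors $H(t)\otimes1$, $1\otimes H(t)$ carry no sign. What your route buys is that the cancellation collapses to a one-line algebra identity with no reindexing of double sums; what it costs is the (routine but worth stating) verification that $\Delta$ extends coefficientwise to an algebra morphism of $\sNSym[[t]]$ with respect to the product \ref{041} on the tensor square. Both versions are complete proofs.
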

\begin{proof}
By definition, $\Psi_0$ is a primitive, so we proceed by induction on $n$. Assume the claim is true for all $k<n$, that is $\Delta(\Psi_k)=\Psi_k\otimes1+1\otimes\Psi_k$. Since $\Delta(H_{n-k})=\sum_{k=0}^{n-k}H_i\otimes H_{n-k-1}$, we obtain\[\Delta(H_{n-k})\Delta(\Psi_k)=\left(\sum_{i=0}^{n-k}H_i\otimes H_{n-k-i}\right)\left(\Psi_k\otimes1+1\otimes\Psi_k\right)=\sum_{i=0}^{n-k}H_i\Psi_k\otimes H_{n-k}+\sum_{i=0}^{n-k}H_i\otimes H_{n-k}\Psi_k.\]Hence,\[\begin{array}{rcl}
\Delta(\Psi_n)&=&{\displaystyle\Delta(\tH_n)-\sum_{k=0}^{n-1}\Delta(H_{n-k})\Delta(\Psi_k),}\\[0.5cm]
&=&{\displaystyle\sum_{k=0}^n(\tH_k\otimes H_{n-k}+H_{n-k}\otimes\tH_k)-\sum_{k=0}^{n-1}\sum_{i=0}^{n-k}(H_i\Psi_k\otimes H_{n-k-i}+H_{n-k-i}\otimes H_i\Psi_k)},\\[0.5cm]
&=&{\displaystyle\sum_{i=1}^n\left(\tH_{n-i}\otimes H_i+H_i\otimes\tH_{n-i}\right)+\tH_n\otimes1+1\otimes\tH_n-\left(\sum_{k=0}^{n-1}H_{n-k}\Psi_k\right)\otimes1-1\otimes\left(\sum_{k=0}^{n-1}H_{n-k}\Psi_k\right)}\,\,\\[0.5cm]
&&-{\displaystyle\sum_{i=1}^n\left[\left(\sum_{k=0}^{n-i}H_{n-i-k}\Psi_k\right)\otimes H_i+H_i\otimes\left(\sum_{k=0}^{n-i}H_{n-i-k}\Psi_k\right)\right]},\\[0.5cm]

&=&{\displaystyle\sum_{i=1}^n\left[\left(\tH_{n-i}-\sum_{k=0}^{n-i}H_{n-i-k}\Psi_k\right)\otimes H_i+H_i\otimes\left(\tH_{n-i}-\sum_{k=0}^{n-i}H_{n-i-k}\Psi_k\right)\right]}\\[0.5cm]
&&+{\displaystyle\left(\tH_n-\sum_{k=0}^{n-1}H_{n-k}\Psi_k\right)\otimes1+1\otimes\left(\tH_n-\sum_{k=0}^{n-1}H_{n-k}\Psi_k\right)}.
\end{array}\]
Therefore $\Delta(\Psi_n)=\Psi_n\otimes1+1\otimes\Psi_n$.
\end{proof}

Recall that $P_n$ can be written as\begin{equation}\label{007}P_n=\sum_{\alpha\preceq(n)}(-1)^{\ell(\alpha)-1}\rg(\alpha)H_{\alpha}.\end{equation}

Now, by an inductive argument, we have the following proposition for $\Psi_n$.
\begin{pro}\label{010}
For every integer $n\geq1$, $\Psi_n$ is the sum of $2^n$ terms given as\[\Psi_n=\tH_n+\sum_{k=0}^{n-1}\sum_{\alpha\preceq(n-k)}(-1)^{\ell(\alpha)}H_{\alpha}\tH_k.\]
\end{pro}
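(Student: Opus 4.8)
The plan is to argue by induction on $n$, using the recursion \ref{019} together with the freeness of $\sNSym$, which lets me rewrite products $H_{n-k}H_\beta$ as concatenations $H_{(n-k)\beta}$. For the base case $n=1$ one checks directly that $\Psi_1=\tH_1-H_1\Psi_0=\tH_1-H_1\tH_0$, matching the right-hand side, since the only composition $\alpha\preceq(1)$ is $(1)$, contributing $(-1)^1H_1\tH_0$.

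For the inductive step, assume the formula holds for every $k<n$ (with the convention $\Psi_0=\tH_0$). First I would substitute the inductive expressions into \ref{019}, obtaining
\[
\Psi_n=\tH_n-\sum_{k=0}^{n-1}H_{n-k}\tH_k-\sum_{k=1}^{n-1}\sum_{j=0}^{k-1}\sum_{\beta\preceq(k-j)}(-1)^{\ell(\beta)}H_{n-k}H_\beta\tH_j.
\]
The middle sum supplies the terms with a single undotted block: reindexing by $j=k$ gives $-\sum_{j=0}^{n-1}H_{(n-j)}\tH_j=\sum_{j=0}^{n-1}(-1)^{\ell((n-j))}H_{(n-j)}\tH_j$, which are exactly the contributions of the length-one compositions $\alpha=(n-j)$ to the target formula.

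The heart of the argument is the triple sum. Here I would use $H_{n-k}H_\beta=H_{(n-k)\beta}$, where $(n-k)\beta$ denotes prepending the block $n-k$ to $\beta$; since $\beta\preceq(k-j)$ is a composition of $k-j$, the tuple $(n-k)\beta$ is a composition of $(n-k)+(k-j)=n-j$ of length $\ell(\beta)+1\geq2$. The crucial combinatorial claim is that, for each fixed $j$, the assignment $(k,\beta)\mapsto\alpha=(n-k)\beta$ is a bijection between the pairs appearing in the sum and all compositions $\alpha\preceq(n-j)$ with $\ell(\alpha)\geq2$: the first part $n-k$ ranges over $1,\ldots,n-j-1$ as $k$ runs from $n-1$ down to $j+1$, and the remaining blocks form an arbitrary composition $\beta$ of $n-j-(n-k)=k-j$. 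Under this bijection the sign satisfies $(-1)^{\ell(\beta)}=-(-1)^{\ell(\alpha)}$, so the leading minus sign of the triple sum is absorbed, turning it into $\sum_{j=0}^{n-1}\sum_{\alpha\preceq(n-j),\,\ell(\alpha)\geq2}(-1)^{\ell(\alpha)}H_\alpha\tH_j$. Adding the length-one contributions from the previous paragraph yields the asserted formula. Finally, the count $2^n$ follows from the fact that the single block $(m)$ has $2^{m-1}$ compositions, so the total number of terms is $1+\sum_{j=0}^{n-1}2^{n-j-1}=1+(2^n-1)=2^n$.

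I expect the main obstacle to be the bookkeeping in the triple sum: pinning down the index ranges of the bijection so that $1\leq k\leq n-1$ and $0\leq j\leq k-1$ correspond precisely to the compositions of $n-j$ of length at least two, and tracking the interaction of the two minus signs (the overall sign from the recursion and the sign change $\ell(\beta)\to\ell(\beta)+1$ under concatenation). Everything else is a routine substitution once the concatenation identity $H_{n-k}H_\beta=H_{(n-k)\beta}$ is invoked.
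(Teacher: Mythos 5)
Your proof is correct and is exactly the argument the paper has in mind: the paper gives no written proof, stating only that the proposition follows "by an inductive argument" from the recursion \ref{019}, and your induction—splitting off the length-one compositions from the first substitution and matching the triple sum bijectively with the compositions of $n-j$ of length at least two via concatenation, with the sign flip $(-1)^{\ell(\beta)}=-(-1)^{\ell(\alpha)}$ absorbing the leading minus—fills in those details correctly, including the count $1+\sum_{j=0}^{n-1}2^{n-j-1}=2^n$.
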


For instance, $\Psi_3$ is given by the following sum of eight terms.\begin{figure}[H]\figele\end{figure}
Recall that if $\alpha$ is a usual composition of length $k$, then $P_{\alpha}=P_{\alpha_1}\cdots P_{\alpha_k}$. It is known that\[H_n=\sum_{\alpha\preceq(n)}\frac{1}{\nu_{\alpha}}P_{\alpha},\quad\text{where}\quad\nu_{\alpha}=\alpha_1(\alpha_1+\alpha_2)\cdots(\alpha_1+\cdots+\alpha_k).\]This together with Equation \ref{019} imply the following\[\tH_n=\Psi_n+\sum_{k=0}^{n-1}\left(\sum_{\alpha\preceq(n-k)}\frac{1}{\nu_{\alpha}}P_{\alpha}\right)\Psi_k.\]

\begin{rem}
Unlike the classic notion of Hopf algebra, if $x,y$ are primitive elements of $\sNSym$, the Lie bracket $[x,y]$ is not necessarily primitive. Indeed,\[\begin{array}{rcl}\Delta([x,y])&=&(x\otimes1+1\otimes x)(y\otimes1+1\otimes y)-(y\otimes1+1\otimes y)(x\otimes1+1\otimes x),\\
&=&[x,y]\otimes1+1\otimes[x,y]+[(-1)^{\df(x)\df(y)}-1]y\otimes x+[1-(-1)^{\df(x)\df(y)}]x\otimes y.
\end{array}\]Thus, $[x,y]$ is primitive if and only if $\df(x)$ or $\df(y)$ is even.

Also note that if we consider the usual product on $\sNSym\otimes\sNSym$, instead of the one in \ref{041}, we obtain a classic structure of Hopf algebra on $\sNSym$, and so the primitive part of $\sNSym$ would be the free Lie algebra generated by the set $\{P_n\mid n\in\N\}\cup\{\Psi_n\mid n\in\N_0\}$.
\end{rem}

\section{Noncommutative elementary and power sum functions in superspace}\label{024}

In this section, we introduce noncommutative analogue versions of elementary and power sum functions in superspace, defined in \cite{DeLaMa06}, by means generating functions.

In what follows, we write the generating function of the complete homogeneous functions $H_n$ and $\tH_n$, by\[\lambda(t,\tau):=\sum_{n\geq 0} t^n(H_n+\tau\tH_n),\]where $t$ and $\tau$ are indeterminate parameters.

\subsection{Elementary functions}\label{025}

The \emph{elementary functions in superspace} are defined by means the following generating function\[\sigma(t,\tau):=\sum_{n\geq 0}t^n(S_n+\tau\tS_n)\quad\text{satisfying}\quad\sigma(t,\tau)\lambda(-t,-\tau)=1.\]

\begin{pro}
For $n\geq1$, we obtain the following recursive formulas:
\[\sum_{k=0}^n(-1)^{n-k}H_kS_{n-k}=\sum_{k=0}^n(-1)^{n-k}S_{n-k}H_{n-k}=0\quad\text{and}\quad\sum_{k=0}^n(-1)^{n-k}(S_k\tH_{n-k}-\tS_kH_{n-k})=0.\]
\end{pro}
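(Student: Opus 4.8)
The plan is to read off all three identities by expanding the defining relation $\sigma(t,\tau)\lambda(-t,-\tau)=1$ as a formal power series in the parameter $t$ and in the odd parameter $\tau$, so that $\tau^2=0$ and $\tau$ anticommutes with the fermionic generators $\tS_m,\tH_n$ while commuting with the even ones. First I would write $\lambda(-t,-\tau)=\sum_{n\ge0}(-1)^nt^n(H_n-\tau\tH_n)$ and multiply on the left by $\sigma(t,\tau)=\sum_{m\ge0}t^m(S_m+\tau\tS_m)$, collecting terms by the total $t$-degree $N=m+n$.

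The key computation is the single bilinear term
\[
(S_m+\tau\tS_m)(H_n-\tau\tH_n)=S_mH_n+\tau\bigl(\tS_mH_n-S_m\tH_n\bigr),
\]
where the term carrying $\tau^2$ vanishes: since $S_m$ is even, $\tau$ passes through it freely, while $\tau\tS_m\tau=-\tau^2\tS_m=0$. Summing against the sign $(-1)^n$ and setting the coefficient of $t^N$ equal to $0$ for every $N\ge1$ separates into a $\tau$-free part and a $\tau$-linear part. The $\tau$-free part gives $\sum_{k=0}^N(-1)^{N-k}S_kH_{N-k}=0$ (writing $m=k$), which is the second identity — read with the ordering $S_kH_{N-k}$, opposite to the first. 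The $\tau$-linear part gives $\sum_{k=0}^N(-1)^{N-k}\bigl(\tS_kH_{N-k}-S_k\tH_{N-k}\bigr)=0$, which is the third identity after multiplying by $-1$.

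To obtain the first identity, with the opposite ordering $H_kS_{n-k}$, I would note that $\lambda(-t,-\tau)$ has invertible constant ($t^0$) coefficient $1-\tau\tH_0$, whose inverse is $1+\tau\tH_0$; hence $\lambda(-t,-\tau)$ is a two-sided unit in the power series ring over $\sNSym$. The defining relation exhibits $\sigma(t,\tau)$ as a left inverse, which must therefore coincide with the unique two-sided inverse, so $\lambda(-t,-\tau)\sigma(t,\tau)=1$ holds as well. Expanding this product in the same way, via $(H_n-\tau\tH_n)(S_m+\tau\tS_m)=H_nS_m+\tau(H_n\tS_m-\tH_nS_m)$, the $\tau$-free coefficient of $t^N$ yields $\sum_{k=0}^N(-1)^kH_kS_{N-k}=0$, equivalent to $\sum_{k=0}^N(-1)^{N-k}H_kS_{N-k}=0$ after multiplying by $(-1)^N$.

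The only delicate points are bookkeeping: tracking the signs produced by the substitution $(t,\tau)\mapsto(-t,-\tau)$ and by moving $\tau$ past the odd generators, and justifying the passage from a one-sided to a two-sided inverse in the noncommutative power series ring. I expect this sign-and-ordering bookkeeping to be the main obstacle, though it is entirely routine once the vanishing of the $\tau^2$-terms is observed.
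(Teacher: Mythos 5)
Your argument is correct and follows the route the paper clearly intends (the paper states this proposition without proof, as an immediate consequence of the defining relation $\sigma(t,\tau)\lambda(-t,-\tau)=1$): expanding in $t$ and the odd parameter $\tau$, with the $\tau^2$-terms vanishing, gives exactly the second and third identities, and your observation that the left inverse must be the two-sided inverse correctly supplies the reversed relation $\lambda(-t,-\tau)\sigma(t,\tau)=1$ needed for the first identity. The only discrepancy is with the statement itself, not your proof: the middle sum as printed, $\sum_{k=0}^n(-1)^{n-k}S_{n-k}H_{n-k}$, is evidently a typo for $\sum_{k=0}^n(-1)^{n-k}S_{n-k}H_{k}$, which is what your expansion produces.
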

Note that $S_n$ is the classic noncommutative elementary function and\[\tS_n=S_n\tH_0+\sum_{k=0}^{n-1}(-1)^{n-k}(S_k\tH_{n-k}-\tS_kH_{n-k}).\]

\begin{pro}\label{009}
For every integer $n\geq0$, we have $S(\tH_n)=(-1)^{n+1}\tS_n$. As a consequence,\[\tS_n=(-1)^{n+1}\sum_{\alpha\preceq(\dot{n})}(-1)^{\ell(\alpha)}H_{\alpha}.\]
\end{pro}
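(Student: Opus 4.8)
The plan is to reduce everything to the group-like behaviour of the generating function $\lambda(t,\tau)$ together with the defining relation $\sigma(t,\tau)\lambda(-t,-\tau)=1$. First I would verify that $\lambda(t,\tau)$ is group-like, i.e. $\Delta(\lambda(t,\tau))=\lambda(t,\tau)\otimes\lambda(t,\tau)$, where the tensor product carries the signed product \ref{041} and $\tau$ is treated as an odd parameter with $\tau^2=0$. Computing $\lambda(t,\tau)\otimes\lambda(t,\tau)$ and collecting by powers of $t$ and $\tau$, the ``both-dotted'' contributions $\tau\tH_a\otimes\tau\tH_b$ vanish because $\tau^2=0$, and what survives is exactly $\sum_N t^N\bigl[\sum_{a+b=N}H_a\otimes H_b+\tau\sum_{a+b=N}(H_a\otimes\tH_b+\tH_a\otimes H_b)\bigr]$, which coincides termwise with $\Delta(\lambda(t,\tau))$ once $\Delta$ is expanded through \ref{004}. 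The vanishing of the $\tau^2$ terms is precisely what matches the absence of both-dotted terms in $\Delta(\tH_N)$ and the identity $\Delta(H_0)=1\otimes1$.

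Next, since the $t^0$-term of $\lambda(t,\tau)$ is $H_0+\tau\tH_0=1+\tau\tH_0$ and $\epsilon(\tH_0)=0$, the counit sends $\lambda(t,\tau)$ to $1$; for a group-like element $g$ with $\epsilon(g)=1$ the antipode axiom $m\circ(S\otimes\id)\circ\Delta$ gives $S(g)\,g=1=g\,S(g)$, so $S(g)=g^{-1}$. The same holds after the substitution $(t,\tau)\mapsto(-t,-\tau)$, which preserves both group-likeness (the computation above is formal in the parameters) and $\tau^2=0$. The defining relation then exhibits $\sigma(t,\tau)$ as the two-sided inverse of the invertible series $\lambda(-t,-\tau)$ (invertible since its $t^0$-term $1-\tau\tH_0$ is a unit), so that
\[
S(\lambda(-t,-\tau))=\lambda(-t,-\tau)^{-1}=\sigma(t,\tau).
\]

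Finally I would extract coefficients. Writing $\lambda(-t,-\tau)=\sum_n(-1)^nt^nH_n-\sum_n(-1)^nt^n\tau\tH_n$ and applying $S$ linearly over the scalar ring $\field[t,\tau]$ (so that $\tau$, being a parameter and not an algebra element, is simply carried through), one obtains
\[
S(\lambda(-t,-\tau))=\sum_n(-1)^nt^nS(H_n)-\sum_n(-1)^nt^n\tau\,S(\tH_n).
\]
Comparing with $\sigma(t,\tau)=\sum_n t^n(S_n+\tau\tS_n)$, the coefficient of $t^n$ recovers the classical identity $S(H_n)=(-1)^nS_n$, while the coefficient of $t^n\tau$ yields $-(-1)^nS(\tH_n)=\tS_n$, that is $S(\tH_n)=(-1)^{n+1}\tS_n$. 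The stated consequence is then immediate: solving for $\tS_n$ and substituting Proposition \ref{000} gives $\tS_n=(-1)^{n+1}S(\tH_n)=(-1)^{n+1}\sum_{\alpha\preceq(\dot n)}(-1)^{\ell(\alpha)}H_{\alpha}$.

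The delicate point throughout is the sign bookkeeping attached to the odd parameter $\tau$. One must check that the signed product \ref{041} together with $\tau^2=0$ really makes $\lambda$ group-like, and, in the last step, that the extra factor $-1$ distinguishing the $t^n\tau$-coefficient from the $t^n$-coefficient arises solely from the $-\tau$ in $\lambda(-t,-\tau)$ and not from any spurious Koszul sign introduced by $S$. This is exactly why it is essential to regard $\tau$ as a coefficient on which $S$ acts trivially, rather than as an odd element of the algebra; with that convention the even and odd parts separate cleanly and produce $(-1)^n$ and $(-1)^{n+1}$ respectively.
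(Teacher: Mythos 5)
Your proof is correct, but it takes a genuinely different route from the paper's. The paper argues by induction on $n$: it writes $\Delta(\tH_n)=\sum_k H_k\otimes\tH_{n-k}+\sum_k\tH_k\otimes H_{n-k}$, feeds this into the recursive antipode formula $S(x)=-\bigl(x+\sum S(x_{(1)})x_{(2)}\bigr)$, and matches the result against the recursion $\tS_n=S_n\tH_0+\sum_{k=0}^{n-1}(-1)^{n-k}(S_k\tH_{n-k}-\tS_kH_{n-k})$ extracted from $\sigma(t,\tau)\lambda(-t,-\tau)=1$, using $S(H_k)=(-1)^kS_k$ and the inductive hypothesis. You instead work directly at the level of generating series: $\lambda(t,\tau)$ is group-like for the signed tensor product \ref{041} once $\tau$ is an odd parameter with $\tau^2=0$ (the would-be $\tau\otimes\tau$ terms vanish, matching the absence of doubly dotted terms in $\Delta(\tH_n)$), so $S(\lambda(-t,-\tau))=\lambda(-t,-\tau)^{-1}=\sigma(t,\tau)$, and extracting the coefficient of $t^n\tau$ gives the claim. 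I checked the sign bookkeeping you flag as delicate: unpacking $S(\lambda(-t,-\tau))\,\lambda(-t,-\tau)=1$ coefficient by coefficient reproduces exactly the antipode axiom applied to $H_n$ and to $\tH_n$ (the term $\tau S(\tH_a)\tau\tH_b$ dies because $S(\tH_a)$ is odd), so treating $\tau$ as a scalar through which $S$ passes is consistent with treating it as odd in the algebra. Your approach is more conceptual, recovers $S(H_n)=(-1)^nS_n$ in the same stroke, and makes the parallel with the classical $\NSym$ argument transparent; the paper's induction is more elementary and avoids having to set up the superalgebra conventions for the formal parameter $\tau$. Both proofs obtain the displayed consequence identically, by substituting Proposition \ref{000}.
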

\begin{proof}
We proceed by induction on $n$. Note that $S(\tH_0)=-\tS_0$ because $\tS_0=\tH_0$. If $n\geq1$, assume the claim is true for all $k<n$. By rewriting the coproduct of $\tH_n$ as\[\Delta(\tH_n)=\sum_{k=0}^nH_k\otimes\tH_{n-k}+\sum_{k=0}^n\tH_k\otimes H_{n-k},\]we obtain\[S(\tH_n)=-\left(\tH_n+\sum_{k=1}^nS(H_k)\tH_{n-k}+\sum_{k=0}^{n-1}S(\tH_k)H_{n-k}\right).\]The fact that $S(H_k)=(-1)^kS_k$ and the inductive hypothesis imply that\[\begin{array}{rcl}
S(\tH_n)&=&{\displaystyle-\left(\tH_n+\sum_{k=1}^n(-1)^kS_k\tH_{n-k}+\sum_{k=0}^{n-1}(-1)^{k+1}\tS_kH_{n-k}\right),}\\[1mm]

&=&{\displaystyle-\left((-1)^nS_n\tH_0+\sum_{k=0}^{n-1}(-1)^k\left(S_k\tH_{n-k}-\tS_kH_{n-k}\right)\right),}\\[1mm]

&=&{\displaystyle(-1)^{n+1}\left(S_n\tH_0+\sum_{k=0}^{n-1}(-1)^{n-k}\left(S_k\tH_{n-k}-\tS_kH_{n-k}\right)\right),}\\[1mm]

&=&(-1)^{n+1}\tS_n.
\end{array}\]Finally, Proposition \ref{000} implies that ${\displaystyle\tS_n=(-1)^{n+1}\sum_{\alpha\preceq(\dot{n})}(-1)^{\ell(\alpha)}H_{\alpha}}$.
\end{proof}

\begin{pro}\label{011}
For every $n\geq0$, $\tS_n={\displaystyle\sum_{k=0}^n(-1)^{n-k}\Psi_{n-k}S_k}$.
\end{pro}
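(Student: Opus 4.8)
The plan is to prove the identity by passing to generating functions. Writing $H(t)=\sum_{n\geq0}t^nH_n$ and $\tH(t)=\sum_{n\geq0}t^n\tH_n$, we have $\lambda(t,\tau)=H(t)+\tau\tH(t)$, and analogously $\sigma(t,\tau)=S(t)+\tau\tS(t)$ with $S(t)=\sum_{n\geq0}t^nS_n$ and $\tS(t)=\sum_{n\geq0}t^n\tS_n$. Setting $\Psi(t)=\sum_{n\geq0}t^n\Psi_n$, the coefficient of $t^n$ in $\Psi(-t)S(t)$ is exactly $\sum_{k=0}^n(-1)^{n-k}\Psi_{n-k}S_k$. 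Hence it suffices to establish the single generating-function identity $\tS(t)=\Psi(-t)S(t)$ and then read off the coefficient of $t^n$.

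First I would record the identities coming from the definitions. Rewriting the recursion \ref{019} as $\sum_{k=0}^nH_{n-k}\Psi_k=\tH_n$ (valid for all $n\geq0$, since the $k=n$ term is $H_0\Psi_n=\Psi_n$ and $\Psi_0=\tH_0$), this is precisely the statement $H(t)\Psi(t)=\tH(t)$. Next I would expand the defining relation $\sigma(t,\tau)\lambda(-t,-\tau)=1$. Since $\lambda(-t,-\tau)=H(-t)-\tau\tH(-t)$ and $\tau$ is odd with $\tau^2=0$ (and $\tau$ commutes with the even series $S(t)$ and $H(-t)$), the product collapses to $S(t)H(-t)+\tau\bigl(\tS(t)H(-t)-S(t)\tH(-t)\bigr)$. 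Comparing with $1$ gives the $\tau^0$-part $S(t)H(-t)=1$ and the $\tau^1$-part $\tS(t)H(-t)=S(t)\tH(-t)$.

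Since $H(-t)$ has constant term $H_0=1$, it is invertible in $\sNSym[[t]]$, so the relation $S(t)H(-t)=1$ forces $S(t)$ to be its two-sided inverse; in particular $H(-t)S(t)=1$, and replacing $t$ by $-t$ gives $H(t)^{-1}=S(-t)$. Now I would combine these facts: from $H(t)\Psi(t)=\tH(t)$ we get $\Psi(t)=S(-t)\tH(t)$, and substituting $t\mapsto -t$ yields $\Psi(-t)=S(t)\tH(-t)$. Therefore
\[\Psi(-t)S(t)=\bigl(S(t)\tH(-t)\bigr)S(t)=\tS(t)\bigl(H(-t)S(t)\bigr)=\tS(t),\]
where the first equality uses $\Psi(-t)=S(t)\tH(-t)$, the second uses the $\tau^1$-identity $S(t)\tH(-t)=\tS(t)H(-t)$, and the last uses $H(-t)S(t)=1$. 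Extracting the coefficient of $t^n$ then completes the proof.

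The step I expect to require the most care is the bookkeeping in the second paragraph: one must check that the mixed term $\tau\tS(t)\cdot(-\tau\tH(-t))$ really vanishes (this is exactly where $\tau^2=0$ is used) and that no stray signs survive when separating the even and odd parts of $\sigma(t,\tau)\lambda(-t,-\tau)$. The only other delicate point is the passage from the one-sided relation $S(t)H(-t)=1$ to a genuine two-sided inverse, which is legitimate because constant-term-$1$ elements of the noncommutative power-series ring $\sNSym[[t]]$ are invertible. An alternative, fully inductive argument mirroring the proof of Proposition \ref{009} is also available, but the generating-function route is considerably cleaner.
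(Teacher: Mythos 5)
Your argument is correct, but it is genuinely different from the paper's. The paper proves this proposition by brute-force comparison in the $H$-basis: it cites Proposition \ref{009} (the expansion $\tS_n=(-1)^{n+1}\sum_{\alpha\preceq(\dot{n})}(-1)^{\ell(\alpha)}H_{\alpha}$), Proposition \ref{010} (the expansion of $\Psi_n$ over compositions $\alpha\preceq(n-k)$), and the analogous expansion of $S_k$, and then matches terms using the combinatorics of dotted compositions covered by $(\dot{n})$. You instead work entirely at the level of generating series: the rearranged recursion $H(t)\Psi(t)=\tH(t)$, the two components $S(t)H(-t)=1$ and $\tS(t)H(-t)=S(t)\tH(-t)$ of the defining relation $\sigma(t,\tau)\lambda(-t,-\tau)=1$, and the invertibility of constant-term-one series in $\sNSym[[t]]$ combine to give $\tS(t)=\Psi(-t)S(t)$ directly. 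All the ingredients you use are either definitional or already recorded in Section \ref{025}, the $\tau$-bookkeeping is handled correctly (the cross term dies by $\tau^2=0$ regardless of whether $\tau$ commutes or anticommutes with odd elements, since $S(t)$ and $H(-t)$ are even), and the promotion of the one-sided inverse to a two-sided one is legitimate and explicitly justified. What your route buys is independence from the basis expansions and from the lattice of dotted compositions --- in particular you do not need Proposition \ref{009} or \ref{010} at all, only the recursions \ref{019} and the defining relation for $\sigma$; what the paper's route buys is that, having already computed those expansions, the identity falls out as a coefficient comparison with no new machinery. Either proof is acceptable; yours is arguably the more transparent of the two.
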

\begin{proof}
It is a consequence of Proposition \ref{009}, Proposition \ref{010} and the fact that ${\displaystyle S_k=(-1)^k\sum_{\beta\preceq(k)}(-1)^{\ell(\beta)}H_{\beta}}$.
\end{proof}

\begin{pro}
For every $n\geq0$, we have\[\Delta(\tS_n)=\sum_{k=0}^n\left(\tS_k\otimes S_{n-k}+S_{n-k}\otimes\tS_k\right).\]In consequence, $S(\tS_n)=(-1)^{n+1}\tH_n$.
\end{pro}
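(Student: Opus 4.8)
The plan is to deduce both assertions from the antipode, using that $\tS_n$ is, up to sign, the antipode of $\tH_n$. Concretely, Proposition \ref{009} gives $S(\tH_n)=(-1)^{n+1}\tS_n$, the coproduct of $\tH_n$ is \ref{004}, and I will use the two classical facts about $\NSym$ that are already invoked above, namely $S(H_k)=(-1)^kS_k$ and $\Delta(S_k)=\sum_{j=0}^kS_j\otimes S_{k-j}$. First I would compute $\Delta(\tS_n)$. Since the antipode of a connected graded super Hopf algebra is a super coalgebra anti-endomorphism, it satisfies $\Delta\circ S=(S\otimes S)\circ\tau\circ\Delta$, where $\tau$ is the signed twist of Section 2. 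Applying this to $\tH_n$ and using \ref{004}, the degrees $\df(\tH_k)=1$ and $\df(H_{n-k})=0$ force every sign produced by $\tau$ to be $1$, so $\tau(\Delta(\tH_n))=\Delta(\tH_n)$, as expected from super-cocommutativity. Substituting $S(H_{n-k})=(-1)^{n-k}S_{n-k}$ and $S(\tH_k)=(-1)^{k+1}\tS_k$ yields
\[
\Delta\bigl(S(\tH_n)\bigr)=(-1)^{n+1}\sum_{k=0}^n\bigl(S_{n-k}\otimes\tS_k+\tS_k\otimes S_{n-k}\bigr).
\]
Because $S(\tH_n)=(-1)^{n+1}\tS_n$, the scalar $(-1)^{n+1}$ cancels, and reindexing $k\mapsto n-k$ in the first summand recovers the claimed formula for $\Delta(\tS_n)$.

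For the consequence I would invoke $S^2=\id$, which holds because $\sNSym$ is cocommutative up to sign. Then, applying $S$ to the identity $\tS_n=(-1)^{n+1}S(\tH_n)$ of Proposition \ref{009} gives $S(\tS_n)=(-1)^{n+1}S^2(\tH_n)=(-1)^{n+1}\tH_n$ at once.

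The main obstacle I anticipate is the sign bookkeeping in the super setting: one must justify that the antipode is a super (rather than ordinary) coalgebra anti-morphism and that $S^2=\id$ follows from super-cocommutativity, so that the displayed signs are exactly as above. To make the argument self-contained using only results stated earlier, I would instead start from Proposition \ref{011}, writing $\tS_n=\sum_{k=0}^n(-1)^{n-k}\Psi_{n-k}S_k$, and apply $\Delta$. Here $\Psi_{n-k}$ is primitive by Proposition \ref{012} and $\Delta(S_k)=\sum_jS_j\otimes S_{k-j}$; moreover all the signs in the tensor product \ref{041} are trivial because the $S_j$ are even. Expanding and then collecting the terms whose right (resp.\ left) tensor factor is a fixed $S_b$, the remaining sum over the other indices is exactly $\tS_{n-b}$ by another application of Proposition \ref{011}, which reassembles $\Delta(\tS_n)=\sum_{k=0}^n(\tS_k\otimes S_{n-k}+S_{n-k}\otimes\tS_k)$. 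With the coproduct in hand, the relation $S(\tS_n)=(-1)^{n+1}\tH_n$ can then be obtained by induction from the antipode recursion, the only real work again being the reindexing of the resulting double sums.
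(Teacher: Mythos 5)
Your second, self-contained argument---expanding $\Delta$ on $\tS_n=\sum_{k}(-1)^{n-k}\Psi_{n-k}S_k$ using the primitivity of the $\Psi$'s (Proposition \ref{012}), the classical coproduct of $S_k$, and regrouping via Proposition \ref{011}---is exactly the proof in the paper, and your observation that all Koszul signs are trivial because the $S_j$ have fermionic degree zero is correct. Your first route, via $\Delta\circ S=(S\otimes S)\circ\tau\circ\Delta$ applied to \ref{004} together with $S^2=\id$, is also sound and has the merit of actually justifying the consequence $S(\tS_n)=(-1)^{n+1}\tH_n$, which the paper states without argument.
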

\begin{proof}
By Proposition \ref{011}, we have\[\tS_n=\sum_{q+r=n}(-1)^q\Psi_qS_r,\qquad q,r\geq0.\]Recall that $\Psi_q$ is primitive (Proposition \ref{012}), that is, $\Delta(\Psi_q)=\Psi_q\otimes1+1\otimes\Psi_q$. Thus,\[\begin{array}{rcl}
\Delta(\tS_n)&=&{\displaystyle\sum_{i+j+k=n}\left[(-1)^i\Psi_iS_j\otimes S_k+S_k\otimes(-1)^i\Psi_iS_j\right],}\\[5mm]
&=&{\displaystyle\sum_{k=0}^n\left[\left(\sum_{i=0}^k(-1)^{k-i}\Psi_{k-i}S_i\right)\otimes S_{n-k}+S_{n-k}\otimes\left(\sum_{i=0}(-1)^{k-i}\Psi_{k-i}S_i\right)\right],}\\[5mm]
&=&{\displaystyle\sum_{k=0}^n\left(\tS_k\otimes S_{n-k}+S_{n-k}\otimes\tS_k\right).}
\end{array}\] 
\end{proof}

\subsection{Power sum functions}\label{026}

The \emph{power sum functions in superspace}, formed by the classic $P_n$ and its analogue in superspace $\tP_n$, are obtained by means the following generating function\[\Pi(t,\tau):=\sum_{n\geq0}t^n(P_n+\tau(n+1)\tP_n)\quad\text{satisfying}\quad\lambda(t,\tau)\Pi(t,\tau)=(t\partial_t+\tau\partial_\tau)\lambda(t,\tau).\]Due to this relation, we obtain Equation \ref{020} and the following proposition.
\begin{pro}\label{006}
For every $n\geq0$, we have\begin{equation}\label{037}\sum_{k=0}^n\left(\tH_{n-k}P_k+(k+1)H_{n-k}\tP_k\right)=(n+1)\tH_n,\quad\text{where}\quad P_0=0.\end{equation}
\end{pro}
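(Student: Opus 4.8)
The plan is to extract the statement from the defining identity $\lambda(t,\tau)\Pi(t,\tau)=(t\partial_t+\tau\partial_\tau)\lambda(t,\tau)$ by comparing the coefficients of $t^n\tau$ on both sides. First I would separate each generating function into its bosonic (that is, $\tau$-free) and fermionic ($\tau$-linear) parts, writing $\lambda(t,\tau)=\sum_{m\geq0}t^mH_m+\tau\sum_{m\geq0}t^m\tH_m$ and $\Pi(t,\tau)=\sum_{m\geq0}t^mP_m+\tau\sum_{m\geq0}t^m(m+1)\tP_m$, and using that $\tau$ is odd with $\tau^2=0$.

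Second, I would expand the product $\lambda\Pi$. Since the bosonic part $\sum_m t^mH_m$ has fermionic degree $0$, the parameter $\tau$ passes through it without a sign, and the term quadratic in the fermionic parts vanishes because $\tau^2=0$. Hence the $\tau$-linear part of $\lambda\Pi$ is
\[
\tau\left[\left(\sum_{m\geq0}t^mH_m\right)\left(\sum_{m\geq0}t^m(m+1)\tP_m\right)+\left(\sum_{m\geq0}t^m\tH_m\right)\left(\sum_{m\geq0}t^mP_m\right)\right],
\]
whose coefficient of $t^n$ is $\sum_{k=0}^n\bigl((k+1)H_{n-k}\tP_k+\tH_{n-k}P_k\bigr)$. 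For the record, the $\tau$-free part reproduces $\sum_{k=0}^nH_kP_{n-k}=nH_n$, i.e.\ Equation \ref{020}, since $P_0=0$.

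Third, I would compute the right-hand side. Applying $t\partial_t$ multiplies the coefficient of $t^m$ by $m$, while $\tau\partial_\tau$ acts as the fermion-number operator, picking out and restoring the $\tau$-linear part; thus the $\tau$-linear part of $(t\partial_t+\tau\partial_\tau)\lambda$ equals $\tau\sum_{m\geq0}(m+1)t^m\tH_m$, with coefficient $(n+1)\tH_n$ at $t^n$. Equating the two coefficients of $t^n\tau$ then yields the claimed identity.

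The computation is routine; the only point requiring care is the bookkeeping of signs attached to the odd parameter $\tau$. This is precisely why the product $\lambda\Pi$ (rather than $\Pi\lambda$) and the convention $P_0=0$ appear in the statement: the absence of extra signs is guaranteed by the facts that $\tau$ need only commute past the \emph{even} homogeneous part $\sum_m t^mH_m$ and that $\tau^2=0$ annihilates the fermion–fermion cross term.
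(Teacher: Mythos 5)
Your proposal is correct and follows exactly the route the paper intends: Proposition \ref{006} is stated as an immediate consequence of the defining relation $\lambda(t,\tau)\Pi(t,\tau)=(t\partial_t+\tau\partial_\tau)\lambda(t,\tau)$, and your extraction of the coefficient of $t^n\tau$ (using that $\tau$ commutes past the even part $\sum_m t^mH_m$ and that $\tau^2=0$ kills the fermion--fermion cross term) is precisely the omitted computation. The only quibble is your closing claim that the order $\lambda\Pi$ versus $\Pi\lambda$ is what guarantees the absence of signs; in fact either order is sign-free for the same reasons, and the choice of order merely determines which of the two (genuinely different, since $\sNSym$ is noncommutative) identities one obtains.
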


\begin{pro}
For every $n\geq0$, we have
\[(n+1)\tP_n=\sum_{\alpha\preceq\,\dot{n}}(-1)^{\ell(\alpha)-1}|\rg(\alpha)|H_{\alpha},\]where $|\rg(\alpha)|$ denotes the degree of dotted composition $(\rg(\alpha))$.
\end{pro}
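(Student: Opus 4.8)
The plan is to induct on $n$, using Proposition \ref{006} as the engine, the known expansion \ref{007} of the classical $P_n$, and the inductive form of the claim for smaller indices. Isolating the $k=n$ summand $(n+1)H_0\tP_n=(n+1)\tP_n$ on the right of the identity in Proposition \ref{006}, and discarding its $k=0$ summand $\tH_nP_0$ (which vanishes since $P_0=0$), I would first rewrite that identity as
\[(n+1)\tP_n=(n+1)\tH_n-\sum_{k=1}^n\tH_{n-k}P_k-\sum_{k=0}^{n-1}(k+1)H_{n-k}\tP_k.\]
For $n=0$ this reads $\tP_0=\tH_0$, while the claimed right-hand side is $|\rg((\dot0))|H_{(\dot0)}=\tH_0$ because $(\dot0)$ is the unique dotted composition $\preceq(\dot0)$; so the base case holds.

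Next I would expand the three pieces and show each matches a family of target summands. The first is already the $\alpha=(\dot n)$ contribution, since $\ell((\dot n))=1$, $\rg((\dot n))=\dot n$ and $|(\dot n)|=n+1$, so $(n+1)\tH_n=(-1)^{\ell-1}|\rg|\,H_{(\dot n)}$. Substituting \ref{007} into the middle sum gives
\[-\sum_{k=1}^n\tH_{n-k}P_k=\sum_{k=1}^n\sum_{\beta\preceq(k)}(-1)^{\ell(\beta)}\rg(\beta)\,H_{(\dot{(n-k)},\beta)},\]
where each $\alpha=(\dot{(n-k)},\beta)$ has $\ell(\alpha)=\ell(\beta)+1$ and rightmost component $\rg(\beta)$, hence $|\rg(\alpha)|=\rg(\beta)$ and the summand is exactly $(-1)^{\ell(\alpha)-1}|\rg(\alpha)|H_\alpha$. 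Invoking the inductive hypothesis $(k+1)\tP_k=\sum_{\gamma\preceq(\dot k)}(-1)^{\ell(\gamma)-1}|\rg(\gamma)|H_\gamma$ in the last sum gives
\[-\sum_{k=0}^{n-1}(k+1)H_{n-k}\tP_k=\sum_{k=0}^{n-1}\sum_{\gamma\preceq(\dot k)}(-1)^{\ell(\gamma)}|\rg(\gamma)|\,H_{(n-k,\gamma)},\]
and again $\alpha=(n-k,\gamma)$ satisfies $\ell(\alpha)=\ell(\gamma)+1$ and $\rg(\alpha)=\rg(\gamma)$, so the summand equals $(-1)^{\ell(\alpha)-1}|\rg(\alpha)|H_\alpha$.

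It then remains to verify that these three families tile $\{\alpha\preceq(\dot n)\}$ bijectively. As recorded in the proof of Proposition \ref{003}, a dotted composition lies $\preceq(\dot n)$ exactly when it has fermionic degree $1$ and degree $n+1$, every such composition merging up to $(\dot n)$ by $\oplus$-summing its single dotted entry with neighbours (two dotted entries never being summed). Any such $\alpha$ of length $\geq2$ is classified by its leftmost component: if $\alpha_1=\dot m$ is dotted, the tail is an ordinary composition $\beta$ of some $k\geq1$ with $m=n-k$, so $\beta\preceq(k)$ and $\alpha$ lies in the middle family; if $\alpha_1=j\in\N$ is not dotted, the tail $\gamma$ carries the unique dot, has fermionic degree $1$ and degree $(n+1)-j$, hence $\gamma\preceq(\dot{(n-j)})$ with $j=n-k\geq1$, placing $\alpha$ in the last family. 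These cases are disjoint (distinguished by whether $\alpha_1$ is dotted) and exhaustive among lengths $\geq2$, and $(\dot n)$ is the unique length-one element; summing the three pieces therefore reproduces $\sum_{\alpha\preceq(\dot n)}(-1)^{\ell(\alpha)-1}|\rg(\alpha)|H_\alpha$, closing the induction.

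The sign bookkeeping and the recursion are routine; the genuine content is the bijection of the last paragraph, namely the identity between $\{\gamma\preceq(\dot k)\}$ and the set of \emph{all} fermionic-degree-one, degree-$(k+1)$ dotted compositions, which guarantees that prepending a head to a tail stays inside the down-set of $(\dot n)$ and covers it without repetition. This is exactly where the characterization of $\preceq$ (that $(\dot k)$ is the maximum of its homogeneous piece) enters, and it is the step I would write out most carefully.
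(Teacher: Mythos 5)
Your proof is correct and follows the same route as the paper's: induction on $n$, rewriting Proposition \ref{006} to isolate $(n+1)\tP_n$, then substituting \ref{007} for $P_k$ and the inductive hypothesis for $(k+1)\tP_k$. The paper leaves the final bookkeeping implicit, whereas you spell out the bijection between the three families of summands and the down-set of $(\dot{n})$ (classified by whether the leftmost component is dotted); that is exactly the step the paper elides, and your treatment of it is sound.
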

\begin{proof}
We proceed by induction on $n$. Proposition \ref{006} implies that $\tP_0=\tH_0$. Let $n\geq1$ and assume the claim is true for $0\leq k<n$. By Proposition \ref{006}, we have:\[(n+1)\tP_n=(n+1)\tH_n-\sum_{k=1}^n\tH_{n-k}P_k-\sum_{k=0}^{n-1}H_{n-k}(k+1)\tP_k.\]The results follows by replacing $P_k$ as in \ref{007} and applying the inductive hypothesis to $(k+1)\tP_k$. 
\end{proof}

\begin{pro}\label{005}
$\tP_0=\Psi_0$, and for every $n\geq1$, we have\begin{equation}\label{038}(n+1)\tP_n=(n+1)\Psi_n+\sum_{k=0}^{n-1}[P_{n-k},\Psi_k].\end{equation}
\end{pro}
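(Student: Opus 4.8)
The plan is to package the families $H_n,\tH_n,\Psi_n,P_n,\tP_n$ into generating series in a central indeterminate $t$ and to reduce the statement to one identity in $\sNSym[[t]]$. I set $h=\sum_{n\ge0}t^nH_n$, $\eta=\sum_{n\ge0}t^n\tH_n$, $\psi=\sum_{n\ge0}t^n\Psi_n$, $\pi=\sum_{n\ge0}t^nP_n$ and $\varpi=\sum_{n\ge0}t^n(n+1)\tP_n$, so that $\lambda(t,\tau)=h+\tau\eta$ and $\Pi(t,\tau)=\pi+\tau\varpi$. The recursion \ref{019} for $\Psi_n$ is $\sum_{k=0}^nH_{n-k}\Psi_k=\tH_n$, i.e. $h\psi=\eta$; the defining relation \ref{020} for $P_n$ is, in generating-series form, $h\pi=th'$; and Proposition \ref{006} is precisely the $\tau$-component of $\lambda\Pi=(t\partial_t+\tau\partial_\tau)\lambda$, which reads $h\varpi+\eta\pi=\partial_t(t\eta)$. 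With these three inputs the assertion \ref{038} is equivalent to the single equation $\varpi=\partial_t(t\psi)+[\pi,\psi]$, since its coefficient of $t^n$ is exactly $(n+1)\tP_n=(n+1)\Psi_n+\sum_{k=0}^{n-1}[P_{n-k},\Psi_k]$ (the bracket sum stops at $n-1$ because $P_0=0$), and at $n=0$ it degenerates to $\tP_0=\Psi_0$.

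To obtain this equation I start from $h\varpi=\partial_t(t\eta)-\eta\pi$ and substitute $\eta=h\psi$. The Leibniz rule gives $\partial_t(t\eta)=\partial_t(th\psi)=h\psi+th'\psi+th\psi'$, while $\eta\pi=h\psi\pi$; replacing $th'$ by $h\pi$ turns the middle term into $h\pi\psi$, so that
\[ h\varpi=h\psi+h\pi\psi+th\psi'-h\psi\pi=h\bigl(\psi+t\psi'+\pi\psi-\psi\pi\bigr)=h\bigl(\partial_t(t\psi)+[\pi,\psi]\bigr). \]
Because $h$ has constant term $H_0=1$ it is invertible in $\sNSym[[t]]$, so cancelling it on the left gives $\varpi=\partial_t(t\psi)+[\pi,\psi]$, and reading off the coefficient of $t^n$ yields \ref{038}.

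The delicate point is the noncommutativity of $\sNSym$: every factor must be kept in its correct order and $h$ may be removed only by left-cancellation. The commutator is not inserted by hand but appears structurally — the term $\eta\pi=h\psi\pi$ cannot be turned into $h\pi\psi$ since $\psi$ and $\pi$ do not commute, and this surplus $-h\psi\pi$ combines with $th'\psi=h\pi\psi$ to form $h[\pi,\psi]$ once $h$ is factored to the left. I expect the only real (and minor) obstacle to be the fermionic bookkeeping needed to justify the form of Proposition \ref{006}: one checks that in $\lambda\Pi=(h+\tau\eta)(\pi+\tau\varpi)$ the coefficient of $\tau$ is genuinely $h\varpi+\eta\pi$, which uses that $\tau$ anticommutes past the odd series $\eta$ so that the cross term $\tau\eta\,\tau\varpi$ dies by $\tau^2=0$. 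After that, all manipulations take place inside $\sNSym[[t]]$ rather than in the tensor square, so the twisted multiplication \ref{041} and its signs never enter and the remaining computation is sign-free.
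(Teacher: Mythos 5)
Your proof is correct, but it takes a genuinely different route from the paper's. The paper argues by induction on $n$: it starts from Proposition \ref{006}, substitutes the inductive hypothesis for each $(k+1)\tP_k$, and then uses the identities $(n+1)\tH_n=(n+1)\sum_{k}H_{n-k}\Psi_k$ and $(n-k)H_{n-k}=\sum_{j}H_{n-k-j}P_j$ to organize a cancellation of two double sums of the form $\sum_{k=1}^{n-1}H_k\bigl(\sum_i[P_{\cdot},\Psi_i]\bigr)$, from which the commutator sum survives. Your generating-series argument replaces that bookkeeping with the single identity $\varpi=\partial_t(t\psi)+[\pi,\psi]$ obtained by left-cancelling the invertible series $h$; the three inputs you use ($h\psi=\eta$ from \ref{019}, $h\pi=th'$ from \ref{020}, and $h\varpi+\eta\pi=\partial_t(t\eta)$ from Proposition \ref{006}) are exactly the same facts the paper feeds into its induction, just packaged once and for all. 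Your observations on the delicate points are also accurate: the orders $\eta\pi$ and $h\varpi$ match the statement of Proposition \ref{006}, all manipulations happen inside $\sNSym[[t]]$ where no Koszul signs arise (the twist \ref{041} only affects the tensor square), and $h$ may indeed be cancelled on the left since $H_0=1$. What your approach buys is transparency — the commutator is seen to arise structurally from the mismatch between $th'\psi=h\pi\psi$ and $\eta\pi=h\psi\pi$ — at the cost of introducing formal series; the paper's induction is more elementary and self-contained but hides this mechanism inside the cancellation of the two $H_k$-weighted sums. One small remark: you flag the verification of Proposition \ref{006} (the $\tau$-component of $\lambda\Pi=(t\partial_t+\tau\partial_\tau)\lambda$) as a possible obstacle, but the paper states it as an already-established proposition and its own proof of \ref{005} likewise takes it as given, so you may simply cite it.
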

\begin{proof}
As $\tH_1=H_1\Psi_0+\Psi_1$, Proposition \ref{006} implies that $\tP_0=\tH_0=\Psi_0$ and $2\tP_1=2\Psi_1+[P_1,\Psi_0]$. Now, we proceed by induction by assuming the claim is true for all $1<k<n$. By Proposition \ref{006}, we have\[(n+1)\tP_n=(n+1)\tH_n-H_n\Psi_0-\sum_{k=1}^{n-1}H_{n-k}(k+1)\tP_k-\sum_{k=1}^n\tH_{n-k}P_k.\]Inductive hypothesis implies that, for each $1\leq k<n$, we have\[(k+1)\tP_k=(k+1)\Psi_k+{\displaystyle\sum_{i=0}^{k-1}[P_{k-i},\Psi_i]}.\]Since $\displaystyle{(n+1)\tH_n=(n+1)\sum_{k=0}^nH_{n-k}\Psi_k}$, then\[(n+1)\tP_n=(n+1)\Psi_n+\sum_{k=0}^{n-1}(n-k)H_{n-k}\Psi_k-\sum_{k=1}^{n-1}\left(H_{n-k}\sum_{i=0}^{k-1}[P_{k-i},\Psi_i]\right)-\sum_{k=1}^n\tH_{n-k}P_k.\]Now, the fact that $(n-k)H_{n-k}=\displaystyle{\sum_{j=1}^{n-k}H_{n-k-j}P_j}$ implies the following\[\begin{array}{rcl}
\displaystyle{\sum_{k=0}^{n-1}(n-k)H_{n-k}\Psi_k}&=&\displaystyle{\sum_{k=0}^{n-1}\left(\sum_{j=1}^{n-k}H_{n-k-j}P_j\right)\Psi_k},\\
&=&\displaystyle{\sum_{k=0}^{n-1}P_{n-k}\Psi_k+\sum_{k=1}^{n-1}H_k(P_{n-k}\Psi_0+\cdots+P_1\Psi_{n-k-1}).}
\end{array}
\]On the other hand, we have\[\begin{array}{rcl}
\displaystyle{\sum_{k=1}^n\tH_{n-k}P_k}&=&\displaystyle{\sum_{k=1}^n\left(\sum_{j=0}^{n-k}H_{n-k-j}\Psi_j\right)P_k,}\\
&=&\displaystyle{\sum_{k=0}^{n-1}\Psi_kP_{n-k}+\sum_{k=1}^{n-1}H_k(\Psi_0P_{n-k}+\cdots+\Psi_{n-k-1}P_1).}
\end{array}\]The results above implies that\[\begin{array}{rcl}
\displaystyle{(n+1)\tP_n}&=&\displaystyle{(n+1)\Psi_n+\sum_{k=0}^{n-1}[P_{n-k},\Psi_k]+\sum_{k=1}^{n-1}H_k\left(\sum_{i=0}^{n-k-1}[P_{n-k-i},\Psi_i]\right)-\sum_{k=1}^{n-1}\left(H_{n-k}\sum_{i=0}^{k-1}[P_{k-i},\Psi_i]\right),}\\[5mm]
&=&\displaystyle{(n+1)\Psi_n+\sum_{k=0}^{n-1}[P_{n-k},\Psi_k].}
\end{array}\]
\end{proof}

Since $\df(P_{n-k})=0$, we have that $[P_{n-k},\Psi_k]$ is a primitive element. As an immediate consequence we have the following result.
\begin{crl}
$\tP_n$ is a primitive element of $\sNSym$ for all $n\geq0$.
\end{crl}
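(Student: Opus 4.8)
The plan is to read off primitivity directly from the explicit expression for $\tP_n$ furnished by Proposition \ref{005}, using that the primitive elements of $\sNSym$ form a linear subspace (they are the kernel of the linear map $\bar{\Delta}$). First I would settle the base case $n=0$: Proposition \ref{005} gives $\tP_0=\Psi_0$, and $\Psi_0$ is primitive by Proposition \ref{012}, so $\tP_0$ is primitive.

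For $n\geq1$ the work is to show that every summand on the right-hand side of the identity $(n+1)\tP_n=(n+1)\Psi_n+\sum_{k=0}^{n-1}[P_{n-k},\Psi_k]$ from Proposition \ref{005} is primitive. The term $(n+1)\Psi_n$ is a scalar multiple of the primitive element $\Psi_n$ (Proposition \ref{012}), hence primitive. For each bracket $[P_{n-k},\Psi_k]$, I would invoke the criterion established in the Remark following Proposition \ref{010}: for primitive $x,y$, the Lie bracket $[x,y]$ is primitive exactly when $\df(x)$ or $\df(y)$ is even. Since $P_{n-k}$ is the classic noncommutative power sum function, it lies in $\NSym$ and carries no dotted component, so $\df(P_{n-k})=0$ is even; as $P_{n-k}$ and $\Psi_k$ are both primitive (the former by \cite[Subsection 3.1]{GKLLRT95}, the latter by Proposition \ref{012}), the criterion applies and $[P_{n-k},\Psi_k]$ is primitive.

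Finally, since a $\Q$-linear combination of primitive elements is again primitive, the entire right-hand side is primitive, and therefore so is $(n+1)\tP_n$; dividing by the nonzero scalar $n+1$ (we work over $\Q$) gives that $\tP_n$ is primitive. There is no genuine obstacle here — the statement is an immediate corollary — and the only point deserving care is verifying the hypotheses of the bracket criterion. This is precisely what the decomposition in Proposition \ref{005} is engineered to provide: it presents $\tP_n$ through brackets $[P_{n-k},\Psi_k]$ in which the fermionic-degree-zero factor $P_{n-k}$ sits on the left, so that the even-fermionic-degree requirement is automatically met.
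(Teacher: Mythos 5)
Your proof is correct and follows essentially the same route as the paper: it applies the decomposition $(n+1)\tP_n=(n+1)\Psi_n+\sum_{k=0}^{n-1}[P_{n-k},\Psi_k]$ from Proposition \ref{005}, observes that $\df(P_{n-k})=0$ so each bracket is primitive by the criterion in the Remark, and concludes by linearity. No issues.
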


In the following proposition, we write $\Psi_n$ in terms of the $P_i$'s and $\tP_i$'s. To simplify the notation, for $g_1,\ldots,g_n$ in a Lie algebra, we set
\[[g_1,\ldots,g_n]=\left\{\begin{array}{ll}
g_1&\text{if }n=1,\\
{[g_1,[g_2,\ldots,g_n]]}&\text{if }n>1\end{array}\right.\]

\begin{pro}\label{008}
For every $n\geq0$, we have\[\Psi_n=\sum_{\alpha=(\alpha_1,\ldots,\alpha_k)}\left(\frac{(-1)^{\ell(\alpha)+1}}{t_{\alpha}}[P_{\alpha_1},\ldots,P_{\alpha_{k-1}},\tP_{\alpha_k}]+\frac{(-1)^{\ell(\alpha)}}{(\alpha_k+1)t_{\alpha}}[P_{\alpha_1},\ldots,P_{\alpha_k},\tP_0]\right),\]where $\alpha$ is a usual composition of $n$, and $\displaystyle{t_{\alpha}=\prod_{i=1}^{k-1}(\alpha_i+\cdots+\alpha_k+1)}$.
\end{pro}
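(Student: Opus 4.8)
The plan is to turn the relation of Proposition~\ref{005} into an explicit recursion for $\Psi_n$ and then induct on $n$. Solving the identity $(n+1)\tP_n=(n+1)\Psi_n+\sum_{k=0}^{n-1}[P_{n-k},\Psi_k]$ for $\Psi_n$ yields
\[\Psi_n=\tP_n-\frac{1}{n+1}\sum_{k=0}^{n-1}[P_{n-k},\Psi_k],\qquad n\geq1,\]
with the base case $\Psi_0=\tP_0$ supplied by the same proposition (for $n=0$ the claimed formula degenerates to the single term $\tP_0$). This rearranged identity is the engine of the argument: it expresses $\Psi_n$ through $\tP_n$ and the strictly smaller $\Psi_k$, to which the inductive hypothesis will apply, and the division by $n+1$ is exactly what the product defining $t_\alpha$ will absorb.

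Assuming the stated closed form for every $\Psi_k$ with $k<n$, I would substitute it into the recursion. Abbreviate, for a composition $\alpha=(\alpha_1,\ldots,\alpha_\ell)$ with $\ell=\ell(\alpha)$, the two brackets $A_\alpha:=[P_{\alpha_1},\ldots,P_{\alpha_{\ell-1}},\tP_{\alpha_\ell}]$ and $B_\alpha:=[P_{\alpha_1},\ldots,P_{\alpha_\ell},\tP_0]$ appearing in the statement. Using bilinearity of the commutator together with the nesting convention $[P_{n-k},[g_1,\ldots,g_m]]=[P_{n-k},g_1,\ldots,g_m]$, each summand $[P_{n-k},\Psi_k]$ becomes a signed sum of iterated brackets obtained by \emph{prepending} $P_{n-k}$ to the brackets indexing $\Psi_k$. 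Concretely, a composition $\beta=(\beta_1,\ldots,\beta_j)$ of $k$ contributing to $\Psi_k$ produces the composition $\alpha=(n-k,\beta_1,\ldots,\beta_j)$ of $n$, and its brackets $A_\beta,B_\beta$ are carried to precisely $A_\alpha,B_\alpha$. The two leftover terms $\tP_n$ and (from $k=0$, where $\Psi_0=\tP_0$) $-\tfrac{1}{n+1}[P_n,\tP_0]$ account exactly for the length-one composition $\alpha=(n)$, matching its $A_{(n)}=\tP_n$ and $B_{(n)}=[P_n,\tP_0]$ contributions.

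The only content beyond bookkeeping is the matching of coefficients, which reduces to one multiplicative identity. For $\ell\geq2$ write $\beta=(\alpha_2,\ldots,\alpha_\ell)$, a composition of $k=n-\alpha_1$, so that $\ell(\beta)=\ell-1$ and the last part of $\beta$ equals $\alpha_\ell$. The factor stripped from the front of $t_\alpha$ is its first factor $\alpha_1+\cdots+\alpha_\ell+1=n+1$, whence $t_\alpha=(n+1)\,t_\beta$. Combining this with the sign shift $\ell(\beta)+1=\ell$, the coefficient $-\tfrac{1}{n+1}\cdot\tfrac{(-1)^{\ell(\beta)+1}}{t_\beta}$ attached to $A_\alpha$ in the recursion collapses to $\tfrac{(-1)^{\ell+1}}{t_\alpha}$, and $-\tfrac{1}{n+1}\cdot\tfrac{(-1)^{\ell(\beta)}}{(\alpha_\ell+1)t_\beta}$ attached to $B_\alpha$ collapses to $\tfrac{(-1)^{\ell}}{(\alpha_\ell+1)t_\alpha}$; these are exactly the two coefficients in the statement. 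Since $\alpha\mapsto(\alpha_1,(\alpha_2,\ldots,\alpha_\ell))$ is a bijection between compositions of $n$ of length $\geq2$ and pairs consisting of a first part $\alpha_1\in\{1,\ldots,n-1\}$ and a composition of $n-\alpha_1$, every bracket $A_\alpha,B_\alpha$ is produced once with the correct coefficient, closing the induction.

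I expect the main obstacle to be purely organizational rather than conceptual: keeping the two families $A_\alpha$ (trailing $\tP_{\alpha_\ell}$) and $B_\alpha$ (trailing $\tP_0$) cleanly separated through the substitution, and verifying that the $\tP_n$ term and the $k=0$ term of the recursion supply exactly the length-one part so that nothing is double counted or omitted. The single analytic fact driving everything is the factorization $t_\alpha=(n+1)\,t_\beta$, which is what makes the $\tfrac{1}{n+1}$ of the recursion dissolve into the telescoping product defining $t_\alpha$.
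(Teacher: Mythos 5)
Your proposal is correct and follows exactly the route the paper indicates (its proof is the one-line remark that the result ``follows by applying Proposition~\ref{005} and an inductive argument on $n$''): you rearrange Proposition~\ref{005} into $\Psi_n=\tP_n-\tfrac{1}{n+1}\sum_{k=0}^{n-1}[P_{n-k},\Psi_k]$, substitute the inductive hypothesis, and match coefficients via $t_\alpha=(n+1)\,t_\beta$. The coefficient bookkeeping and the bijection between length-$\geq 2$ compositions of $n$ and pairs $(\alpha_1,\beta)$ all check out, so you have simply supplied the details the paper omits.
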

\begin{proof}
The proof follows by applying Proposition \ref{005} and an inductive argument on $n$. 
\end{proof}

As a consequence, we obtain the following result.
\begin{crl}
The Lie algebras generated by $P_i,\Psi_i$ and $P_i,\tP_i$ respectively, coincide. 
\end{crl}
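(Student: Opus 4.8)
The plan is to prove the equality by a double-inclusion argument, showing that each generating family lies in the Lie algebra generated by the other. Write $\mathfrak{g}_1$ for the Lie algebra generated by $\{P_i,\Psi_i\}$ and $\mathfrak{g}_2$ for the one generated by $\{P_i,\tP_i\}$. Since the $P_i$ are common to both generating sets, it suffices to prove that every $\Psi_n$ lies in $\mathfrak{g}_2$ and that every $\tP_n$ lies in $\mathfrak{g}_1$.

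For the inclusion $\mathfrak{g}_1\subseteq\mathfrak{g}_2$, I would invoke Proposition \ref{008} directly: it expresses each $\Psi_n$ as a $\field$-linear combination of iterated Lie brackets of the form $[P_{\alpha_1},\ldots,P_{\alpha_{k-1}},\tP_{\alpha_k}]$ and $[P_{\alpha_1},\ldots,P_{\alpha_k},\tP_0]$. Each such bracket is by definition an element of the Lie algebra generated by the $P_i$'s and $\tP_i$'s, so $\Psi_n\in\mathfrak{g}_2$ for all $n\geq0$. Combined with $P_i\in\mathfrak{g}_2$, this gives $\mathfrak{g}_1\subseteq\mathfrak{g}_2$.

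For the reverse inclusion $\mathfrak{g}_2\subseteq\mathfrak{g}_1$, I would use Proposition \ref{005}. Since $\tP_0=\Psi_0\in\mathfrak{g}_1$, and for $n\geq1$ we have $(n+1)\tP_n=(n+1)\Psi_n+\sum_{k=0}^{n-1}[P_{n-k},\Psi_k]$, the right-hand side is a $\field$-linear combination of $\Psi_n$ and the brackets $[P_{n-k},\Psi_k]$, all of which lie in $\mathfrak{g}_1$. Dividing by $n+1$ --- which is legitimate because we work over $\field=\Q$ --- yields $\tP_n\in\mathfrak{g}_1$. Together with $P_i\in\mathfrak{g}_1$, this gives $\mathfrak{g}_2\subseteq\mathfrak{g}_1$, and hence equality.

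The essential content is already carried by Propositions \ref{005} and \ref{008}, so no genuine obstacle remains: the corollary is a formal consequence of having each family expressed inside the Lie algebra spanned by the other. The one point demanding a moment of care is the division by $n+1$ in the second inclusion, which relies on the ground field having characteristic zero; over a field where some $n+1$ vanishes the argument would break, and one would instead need a denominator-free recursive expression for $\tP_n$ in terms of $\{P_i,\Psi_i\}$.
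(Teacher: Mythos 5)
Your proof is correct and matches the paper's intent: the corollary is stated there as an immediate consequence of Proposition \ref{008} (with Proposition \ref{005} implicitly supplying the reverse inclusion), and your double-inclusion argument simply writes out the details the paper leaves unstated. The remark about division by $n+1$ requiring characteristic zero is a sensible observation, consistent with the paper working over $\field=\Q$ throughout.
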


\section{Ribbon Schur functions in superspace}\label{027}

In this section, we extend the classic noncommutative Ribbon Schur functions \cite{GKLLRT95} to superspace. These functions form a new basis of the Hopf algebra $\sNSym$. We give an explicit formula for the product of noncommutative Ribbon Schur functions in superspace and write other basis of $\sNSym$ in terms of these functions.

Let $\alpha$ be a dotted composition. The \emph{noncommutative Ribbon Schur function in superspace} $R_{\alpha}$ is defined inductively as follows\[R_{\alpha}=H_{\alpha}-\sum_{\alpha\prec\beta}R_{\beta}.\]Notice that $R_{\alpha}=H_{\alpha}$ whenever $\alpha$ is maximal. Furthermore, if $\alpha$ is a usual composition, $R_{\alpha}$ is a classic noncommutative Ribbon Schur function.

To characterize the product of ribbon functions in superspace, we introduce a new (partial) operation $\odot$ on dotted compositions. Given $\alpha=(\alpha_1,\ldots,\alpha_r)$ and $\beta=(\beta_1,\ldots,\beta_s)$ be two dotted compositions such that $\alpha_r$ and $\beta_1$ are not dotted at once, we define $\alpha\odot\beta=(\alpha_1,\ldots,\alpha_{r-1},\alpha_r\oplus\beta_1,\beta_2,\ldots,\beta_s)$. For instance,\figten In particular, due to Remark \ref{015}, we obtain the following identities relating $\odot$ and the usual concatenation:\[(\alpha\odot\beta)\odot x=\alpha\odot(\beta\odot x),\quad(\alpha\beta)\odot x=\alpha(\beta\odot x),\quad(\alpha\odot\beta)x=\alpha\odot(\beta x),\qquad x\in\N\cup\dot{\N}.\]

The following is the main result of this section.
\begin{thm}\label{018}
Let $\alpha,\beta$ be two dotted compositions. Then\[R_{\alpha}R_{\beta}=\left\{\begin{array}{ll}
R_{\alpha\beta}&\text{if }\rg(\alpha),\beta_1\in\dot{\N}_0,\\
R_{\alpha\beta}+R_{\alpha\odot\beta}&\text{otherwise}.
\end{array}\right.\]
\end{thm}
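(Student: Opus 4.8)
The plan is to reduce everything to a combinatorial description of the coarsenings of a concatenation and then run an induction on the covering poset. First I would rewrite the defining recursion in the equivalent form $H_\mu=\sum_{\gamma\in\cv(\mu)}R_\gamma$, and record that concatenation is multiplicative on the complete functions, $H_\alpha H_\beta=H_{\alpha\beta}$ (immediate, since $\sNSym$ is free on the $H_r$ and $H_\mu=H_{\mu_1}\cdots H_{\mu_\ell}$). The key preliminary observation is an explicit description of $\cv(\mu)$: a dotted composition covers $\mu$ exactly when it is obtained by grouping the components of $\mu$ into consecutive blocks, each containing at most one dotted component, and $\oplus$-summing every block. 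This holds because a covering move merges two consecutive components that are not both dotted, iterating such moves groups components into consecutive runs, and a run collapses to a single component precisely when it carries at most one dot, so that its $\oplus$-sum is unambiguous by Remark \ref{015} and no forbidden sum of two dots is ever performed.

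With this description I would analyse $\cv(\alpha\beta)$ by splitting according to whether a block straddles the boundary between $\rg(\alpha)$ and $\beta_1$. If no block crosses the boundary, the coarsening factors uniquely as a concatenation $\gamma'\gamma''$ with $\gamma'\in\cv(\alpha)$ and $\gamma''\in\cv(\beta)$, and this is a bijection onto the non-crossing coarsenings. If one block does cross the boundary, that block splits uniquely into a suffix block of $\alpha$ ending at $\rg(\alpha)$ and a prefix block of $\beta$ starting at $\beta_1$; this matches the crossing coarsenings bijectively with the pairs $(\gamma',\gamma'')\in\cv(\alpha)\times\cv(\beta)$ for which $\rg(\gamma')$ and $\gamma''_1$ are not both dotted, the coarsening then being $\gamma'\odot\gamma''$. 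Writing $\varepsilon(\gamma',\gamma'')=1$ in that case and $0$ otherwise, the two families combine into the single identity
\[
H_{\alpha\beta}=\sum_{\gamma'\in\cv(\alpha),\ \gamma''\in\cv(\beta)}\bigl(R_{\gamma'\gamma''}+\varepsilon(\gamma',\gamma'')\,R_{\gamma'\odot\gamma''}\bigr).
\]

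Finally I would prove the theorem by induction on the poset, assuming the formula for every pair $(\gamma',\gamma'')$ with $\alpha\preceq\gamma'$, $\beta\preceq\gamma''$ and $(\gamma',\gamma'')\neq(\alpha,\beta)$. From $H_\alpha H_\beta=H_{\alpha\beta}$ together with $H_\mu=\sum_{\gamma\in\cv(\mu)}R_\gamma$ one extracts
\[
R_\alpha R_\beta=H_{\alpha\beta}-\sum_{\substack{\alpha\preceq\gamma',\ \beta\preceq\gamma''\\(\gamma',\gamma'')\neq(\alpha,\beta)}}R_{\gamma'}R_{\gamma''}.
\]
Replacing each $R_{\gamma'}R_{\gamma''}$ in the sum by $R_{\gamma'\gamma''}+\varepsilon(\gamma',\gamma'')R_{\gamma'\odot\gamma''}$ via the inductive hypothesis and substituting the displayed identity for $H_{\alpha\beta}$, the two double sums cancel term by term except for the single index $(\gamma',\gamma'')=(\alpha,\beta)$, leaving $R_\alpha R_\beta=R_{\alpha\beta}+\varepsilon(\alpha,\beta)R_{\alpha\odot\beta}$. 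Since $\varepsilon(\alpha,\beta)$ is $0$ exactly when $\rg(\alpha),\beta_1\in\dot{\N}_0$ and $1$ otherwise, this is the asserted dichotomy.

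I expect the main obstacle to be the bijection for the crossing coarsenings, and specifically the verification that the constraint ``at most one dot per block'' is preserved when the straddling block is split across the boundary and re-merged by $\odot$. This is exactly the point where the hypothesis that $\rg(\alpha)$ and $\beta_1$ are not both dotted (equivalently, that $\alpha\odot\beta$ is defined) and the restricted associativity of $\oplus$ from Remark \ref{015} are needed, and it is what forces the extra ribbon $R_{\alpha\odot\beta}$ to appear precisely in the ``otherwise'' case.
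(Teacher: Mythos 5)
Your proposal is correct, and it takes a genuinely different route from the paper. The paper proceeds by induction on $\ell(\beta)$: it first settles the cases $\ell(\beta)\leq 2$ (Lemmas \ref{013}, \ref{014} and \ref{017}), then peels off the last component $x=\rg(\beta)$ and runs a substantial case analysis on which of $\rg(\alpha)$, $\rg(\gamma)$, $x$ are dotted, relying repeatedly on the mixed associativity identities $(\alpha\odot\beta)\odot x=\alpha\odot(\beta\odot x)$, $(\alpha\beta)\odot x=\alpha(\beta\odot x)$, $(\alpha\odot\beta)x=\alpha\odot(\beta x)$. You instead establish one global identity expressing $H_{\alpha\beta}$ as a sum over pairs $(\gamma',\gamma'')\in\cv(\alpha)\times\cv(\beta)$, via the block description of $\preceq$ (a coarsening groups components into consecutive runs carrying at most one dot each; since the number of dotted components in a run is invariant under legal merges, runs with two dots never collapse, and runs with at most one dot collapse unambiguously by Remark \ref{015}), and then run a downward induction on the finite product poset in which everything cancels except the term indexed by $(\alpha,\beta)$. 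Your boundary-block bijection is sound: the straddling block carries at most one dot, so its two halves are never both dotted, $\oplus$ is associative on it, the resulting coarsening is exactly $\gamma'\odot\gamma''$, and since the degree sequence of a coarsening determines its block partition there is no double counting; the base case (both $\alpha$ and $\beta$ maximal) is immediate since then $R_\alpha=H_\alpha$ and $R_\beta=H_\beta$. What your argument buys is the elimination of the case analysis and a transparent explanation of where the extra term $R_{\alpha\odot\beta}$ comes from, namely the crossing coarsenings; what the paper's incremental approach buys is that the intermediate single-letter Lemma \ref{014} is itself reused later (for instance in the proofs of Propositions \ref{033} and \ref{047}).
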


To prove Theorem \ref{018}, first we need to show Lemma \ref{013}, Lemma \ref{014} and Lemma \ref{017}.

\begin{lem}\label{013}
Let $\alpha$ be a dotted composition, and let $x\in\N\cup\dot{\N}_0$.
\begin{enumerate}
\item If $x\in\N$, then $\cv(\alpha x)=\{\beta x\mid\alpha\preceq\beta\}\cup\{\beta\odot x\mid\alpha\preceq\beta\}$.
\item If $x\in\dot{\N}$, then $\cv(\alpha x)=\{\beta x\mid\alpha\preceq\beta\}$ whenever $\rg(\alpha)$ is dotted. Otherwise, we have $\cv(\alpha x)=\{\beta x\mid\alpha\preceq\beta\}\cup\{\beta\odot x\mid\alpha\preceq\beta,\,\rg(\beta)\in\N\}$.
\end{enumerate}
\end{lem}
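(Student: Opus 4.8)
The plan is to unwind $\preceq$ as the transitive closure of the covering relation, so that $\gamma\in\cv(\alpha x)$ exactly when $\gamma$ is reachable from $\alpha x$ by a sequence of \emph{merges}, a merge being the replacement of two adjacent non--both--dotted components by their $\oplus$--sum. Since $x$ sits at the right end of $\alpha x$, the whole argument turns on whether and at which step $x$ is ever fused with its left neighbour.

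Two elementary observations drive the proof. First, a dotted rightmost component persists: if $\rg(\beta)$ is dotted and $\beta'$ covers $\beta$, then a merge either leaves the last slot untouched or fuses $\beta_{s-1}$ with the dotted $\rg(\beta)$ — which forces $\beta_{s-1}\in\N$ and yields a dotted sum — so $\rg(\beta')$ is again dotted; by induction, $\rg(\alpha)$ dotted implies $\rg(\beta)$ dotted for every $\beta\succeq\alpha$. Second, $\beta_s\oplus x$ is dotted precisely when exactly one of $\beta_s,x$ is dotted, and in the only situations we meet ($x\in\N$, or $x$ dotted with $\rg(\beta)=\beta_s\in\N$) a legal merge of $\beta_{s-1}$ with $\beta_s\oplus x$ forces at most one of $\beta_{s-1},\beta_s,x$ to be dotted; Remark \ref{015} then yields the push--through identity $\beta_{s-1}\oplus(\beta_s\oplus x)=(\beta_{s-1}\oplus\beta_s)\oplus x$, and the merge is legal on $\beta_s\oplus x$ exactly when the corresponding merge of $\beta_{s-1},\beta_s$ is legal on $\beta$.

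With these in hand I would argue as follows. If in a merge sequence $x$ is never fused, all activity is confined to the first $\ell(\alpha)$ slots and the outcome is $\beta x$ with $\alpha\preceq\beta$; conversely every such $\beta x$ is reachable. If $x$ is fused, examine the first such step: just beforehand the configuration is $\beta x$ with $\alpha\preceq\beta$, and legality forces $\rg(\beta),x$ not both dotted, producing $\beta\odot x$. The push--through identity then lets me show by induction that any further merge carries a configuration $\beta'\odot x$ (with $\alpha\preceq\beta'$, and $\rg(\beta')\in\N$ in the dotted--$x$ case) to another of the same shape, each such merge mirroring a legal merge on $\beta'$. Hence the fused outcomes are exactly $\{\beta\odot x\mid\alpha\preceq\beta\}$ when $x\in\N$ and $\{\beta\odot x\mid\alpha\preceq\beta,\ \rg(\beta)\in\N\}$ when $x$ is dotted, the reverse inclusions being obtained by first running $\alpha\to\beta$ and then fusing $x$. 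Combining the two branches gives part (1), and part (2) in the subcase $\rg(\alpha)\in\N$; when instead $\rg(\alpha)$ and $x$ are both dotted, the first observation shows $x$ can never be fused, leaving only the unfused outcomes. (Since $\dot{0}$ is dotted, the value $x=\dot{0}$ is handled by part (2) as well.)

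The main obstacle will be the closure step in the fused branch: after $x$ has been absorbed into the last component, I must ensure that no further merge escapes the family $\{\beta'\odot x\}$ and that the order of $\oplus$--summation is immaterial. This is precisely where the non--associativity of $\oplus$ flagged in Remark \ref{015} threatens the argument, and it survives only because every merge adjacent to the absorbed $x$ involves at most one dotted entry among the three relevant components, so the push--through identity applies and the legality constraints on $\beta'$ and on $\beta'\odot x$ line up. Keeping this bookkeeping exact — in particular the asymmetry that, for dotted $x$, a merge legal on $\beta'$ need not remain legal on $\beta'\odot x$ — is the delicate point; no bound on fermionic degree is ever needed, so the entire difficulty is combinatorial.
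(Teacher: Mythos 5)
Your proof is correct and follows essentially the same route as the paper's: both arguments rest on the dichotomy of whether the final component $x$ is ever $\oplus$-absorbed into its left neighbour, with the paper reading this off directly from $\rg(\gamma)$ while you track it step by step along the covering chain. Your explicit use of the push-through identity $(\beta_{s-1}\oplus\beta_s)\oplus x=\beta_{s-1}\oplus(\beta_s\oplus x)$ via Remark~\ref{015}, and of the persistence of a dotted rightmost component, makes rigorous the points the paper leaves implicit when it writes $\rg(\gamma)=\alpha[i]\oplus x$ without fixing an order of summation.
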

\begin{proof}
By definition, $\cv(\alpha x)=\{\gamma\mid\alpha x\preceq\gamma\}$. Note that, in all cases, the set on the right is contained in $\cv(\alpha x)$. Conversely, for $\gamma\in\cv(\alpha x)$, we distinguish three cases.

Let $x\in\N$. If $\rg(\gamma)=x$, the other components of $\gamma$ are obtained as $\oplus$-sums of consecutive components of $\alpha$. So, $\gamma=\beta x$ for some $\beta\succeq\alpha$. Now, if $\rg(\gamma)\neq x$, there is $i$ such that $\rg(\gamma)=\alpha[i]\oplus x$, where $\alpha[i]$ is the $\oplus$-sum of the last $\ell(\alpha)-i-1$ components of $\alpha$. Note that at most one of the components that define $\alpha[i]$ is dotted. Consider $\gamma'$ such that $\gamma=\gamma'\rg(\gamma)$ and define the dotted composition $\beta=\gamma'\alpha[i]$. Since $\gamma'$ is obtained by covering the dotted composition formed by the the first $i$ components of $\alpha$, we obtain that $\beta\succeq\alpha$ and $\gamma=\beta\odot x$.

If $x,\rg(\alpha)\in\dot{\N}$, they are not $\oplus$-summed when $\alpha x$ is covered. Then $\gamma=\beta x$ for some dotted composition $\beta\succeq\alpha$.

Finally, consider $x\in\dot{\N}$ and $\rg(\alpha)\in\N$. As in the first case, if $\rg(\gamma)=x$, there is $\beta\succeq\alpha$ such that $\gamma=\beta x$. Now, if $\rg(\gamma)\neq x$, as above, $\gamma=\beta\odot x$, where $\beta=\gamma'\alpha[i]$ with $\alpha[i]\in\N$ because $x$ is already dotted.
\end{proof}

Notice that, from the previous lemma, for $x,y\in\N\cup\dot{\N}$, we have\begin{equation}\label{016}
\cv((x,y))=\left\{\begin{array}{ll}
\{(x,y),(x\oplus y)\}&\text{if }\{x,y\}\not\subset\dot{\N}_0,\\
\{(x,y)\}&\text{otherwise}.
\end{array}\right.
\end{equation}

\begin{lem}\label{014}
Let $\alpha$ be a dotted composition, and let $x\in\N\cup\dot{\N}$. Then\[R_{\alpha}H_x=\left\{\begin{array}{ll}R_{\alpha x}&\text{if }\rg(\alpha),x\in\dot{\N}_0,\\
R_{\alpha x}+R_{\alpha\odot x}&\text{otherwise}.\end{array}\right.\]
\end{lem}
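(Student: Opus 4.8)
The plan is to invert the defining recursion of the Ribbon functions and then run an induction over the covering poset, feeding in the description of $\cv(\alpha x)$ supplied by Lemma \ref{013}. Summing $R_{\alpha}=H_{\alpha}-\sum_{\alpha\prec\beta}R_{\beta}$ gives the inversion $H_{\alpha}=\sum_{\beta\in\cv(\alpha)}R_{\beta}=\sum_{\alpha\preceq\beta}R_{\beta}$, and since $H$ is defined on dotted compositions as the product of the corresponding generators we also have $H_{\alpha}H_x=H_{\alpha x}$. Combining these, $H_{\alpha x}=\sum_{\gamma\in\cv(\alpha x)}R_{\gamma}$, and I would expand the right-hand side using the three alternatives of Lemma \ref{013}.

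For the induction I would argue downward in the poset of dotted compositions of degree $|\alpha|$: the base case is when there is no $\beta$ with $\alpha\prec\beta$ (so $R_{\alpha}=H_{\alpha}$ and the claim is just the inversion formula read off from Lemma \ref{013} with $\cv(\alpha)=\{\alpha\}$), and in the inductive step I assume the statement for every $\beta$ with $\alpha\prec\beta$. This is well-founded because a covering step merges two consecutive components, so $\alpha\prec\beta$ forces $\ell(\beta)<\ell(\alpha)$. Writing $R_{\alpha}H_x=H_{\alpha x}-\sum_{\alpha\prec\beta}R_{\beta}H_x$, I substitute the expansion of $H_{\alpha x}$ and replace each $R_{\beta}H_x$ with $\alpha\prec\beta$ by its value under the inductive hypothesis.

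The computation then splits along the three cases of Lemma \ref{013}. When $x\in\N$ one is always in the ``otherwise'' branch, so $H_{\alpha x}=\sum_{\alpha\preceq\beta}(R_{\beta x}+R_{\beta\odot x})$ while $R_{\beta}H_x=R_{\beta x}+R_{\beta\odot x}$ for every $\alpha\prec\beta$; the sums over $\alpha\prec\beta$ cancel those over $\alpha\preceq\beta$, leaving exactly the $\beta=\alpha$ terms $R_{\alpha x}+R_{\alpha\odot x}$. When $x\in\dot{\N}$ and $\rg(\alpha)$ is dotted, Lemma \ref{013} gives $H_{\alpha x}=\sum_{\alpha\preceq\beta}R_{\beta x}$ and the cancellation leaves only $R_{\alpha x}$. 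When $x\in\dot{\N}$ and $\rg(\alpha)\in\N$, the expansion of $H_{\alpha x}$ carries the restricted sum $\sum_{\alpha\preceq\beta,\,\rg(\beta)\in\N}R_{\beta\odot x}$, and the cancellation yields $R_{\alpha x}+R_{\alpha\odot x}$, the second term surviving because $\alpha$ itself satisfies $\rg(\alpha)\in\N$.

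The step requiring care—the main obstacle—is making the cancellation bookkeeping exact in the two dotted-$x$ cases, where one must know into which branch of the inductive hypothesis each $R_{\beta}H_x$ falls. The key observation is that if $\rg(\alpha)$ is dotted, then $\rg(\beta)$ is dotted for every $\beta$ with $\alpha\preceq\beta$: a covering step merges only two non-dotted components into a non-dotted one, so the dotted last letter of $\alpha$ is never absorbed and persists as the rightmost component of $\beta$. This guarantees that every intermediate product $R_{\beta}H_x$ lands in the same branch as $R_{\alpha}H_x$, and that the restricted index set $\{\beta\mid\alpha\preceq\beta,\ \rg(\beta)\in\N\}$ coming from the expansion of $H_{\alpha x}$ matches the one produced by the inductive hypothesis, so that the cancellation collapses cleanly to the claimed one or two terms.
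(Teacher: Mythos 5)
Your proposal is correct and follows essentially the same route as the paper's proof: both invert the defining recursion to get $H_{\alpha x}=\sum_{\alpha x\preceq\gamma}R_{\gamma}$, run an induction whose hypothesis applies to every $\beta$ with $\alpha\prec\beta$ (your downward induction in the covering poset is equivalent to the paper's induction on $\ell(\alpha)$, since a covering step strictly decreases length), and split along the three cases of Lemma \ref{013}, using that a dotted rightmost component persists under covering. One justification is misstated, although the claim it supports is true: a covering step does \emph{not} merge only two non-dotted components --- it merges any two consecutive components that are not \emph{both} dotted, so a dotted last letter can perfectly well be absorbed (e.g.\ $(1,\dot{0})\prec(\dot{1})$). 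The correct reason that $\rg(\beta)$ remains dotted for every $\beta\succeq\alpha$ is that $m\oplus\dot{n}=\dot{r}$: absorbing the dotted rightmost component into an undotted neighbour still yields a dotted rightmost component, so the case bookkeeping you rely on goes through unchanged.
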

\begin{proof}
We proceed by induction on the length of $\alpha$. If $\ell(\alpha)=1$, it is a consequence of \ref{016}. Now, if $\ell(\alpha)>1$, assume the result is true for all dotted compositions of smaller length. Recall that if $\alpha\prec\beta$, then $\ell(\beta)<\ell(\alpha)$, and that, by definition, we have\[R_{\alpha}=H_{\alpha}-\sum_{\alpha\prec\beta}R_{\beta},\quad\text{then}\quad R_{\alpha}H_x=H_{\alpha x}-\sum_{\alpha\prec\beta}R_{\beta}H_x=\sum_{\alpha x\preceq\gamma}R_{\gamma}-\sum_{\alpha\prec\beta}R_{\beta}H_x.\]We distinguish two cases. If $x\in\N$, Lemma \ref{013} implies that $\cv(\alpha x)=\{\beta x\mid\alpha\preceq\beta\}\cup\{\beta\odot x\mid\alpha\preceq\beta\}$. This together with the inductive hypothesis implies that\[R_{\alpha}H_x=\sum_{\alpha\preceq\beta}(R_{\beta x}+R_{\beta\odot x})-\sum_{\alpha\prec\beta}(R_{\beta x}+R_{\beta\odot x})=R_{\alpha x}+R_{\alpha\odot x}.\]Let $x\in\dot{\N}_0$. If $\rg(\alpha)\in\dot{\N}_0$, then $\rg(\beta)\in\dot{\N}_0$ for all $\beta\succeq\alpha$, and, by Lemma \ref{013}, we have $\cv(\alpha x)=\{\beta x\mid\alpha\preceq\beta\}$. By inductive hypothesis, we obtain\[R_{\alpha}H_x=\sum_{\alpha\preceq\beta}R_{\beta x}-\sum_{\alpha\prec\beta}R_{\beta x}=R_{\alpha x}.\]Now, if $\rg(\alpha)\in\N$, since $\{\beta\succeq\alpha\mid\rg(\beta)\in\N\}$ is nonempty, Lemma \ref{013} implies that\[H_{\alpha x}=\sum_{\alpha\preceq\beta}R_{\beta x}+\sum_{\alpha\preceq\beta,\,\rg(\beta)\in\N}R_{\beta\odot x}.\]On the other hand, by applying the inductive hypothesis for $\beta\succ\alpha$, we have $R_{\beta}H_x=R_{\beta x}$ if $\rg(\beta)\in\dot{\N}_0$, and $R_{\beta}H_x=R_{\beta x}+R_{\beta\odot x}$ if $\rg(\beta)\in\N$. Hence,\[R_{\alpha}H_x=\sum_{\alpha\preceq\beta}R_{\beta x}+\sum_{\alpha\preceq\beta,\,\rg(\beta)\in\N}R_{\beta\odot x}-\sum_{\alpha\prec\beta}R_{\beta x}-\sum_{\alpha\prec\beta,\,\rg(\beta)\in\N}R_{\beta\odot x}=R_{\alpha x}+R_{\alpha\odot x}.\]This concludes the proof.
\end{proof}

\begin{lem}\label{017}
Let $\alpha$ be a dotted composition, and let $\beta=(x,y)$ for some $x,y\in\N\cup\dot{\N}_0$. Then\[R_{\alpha}R_{\beta}=\left\{\begin{array}{ll}
R_{\alpha\beta}&\text{if }\rg(\alpha),x\in\N_0,\\
R_{\alpha\beta}+R_{\alpha\odot\beta}&\text{otherwise}.
\end{array}\right.\]
\end{lem}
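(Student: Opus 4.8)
The plan is to reduce the product to repeated applications of Lemma~\ref{014}, which already evaluates a ribbon function times a single generator. First I would determine $R_\beta$ itself. Since $\beta=(x,y)$, the covering description~\ref{016} (which, like Lemma~\ref{013}, is valid allowing the value $\dot{0}$) gives $\cv(\beta)=\{\beta\}$ when $x,y\in\dot{\N}_0$ and $\cv(\beta)=\{\beta,(x\oplus y)\}$ otherwise; as $(x\oplus y)$ is maximal, this yields, with a local indicator $c=0$ if $x,y\in\dot{\N}_0$ and $c=1$ otherwise,
\[
R_\beta \;=\; H_xH_y-c\,H_{x\oplus y}.
\]
By associativity and bilinearity of the product this gives
\[
R_\alpha R_\beta \;=\; \bigl(R_\alpha H_x\bigr)H_y-c\,R_\alpha H_{x\oplus y},
\]
where each factor on the right is a ribbon function times a single $H$, so that Lemma~\ref{014} applies to every occurrence (its extension to the multiplier $\dot{0}$, needed when $x$ or $y$ equals $\dot{0}$, holds by the same argument, Lemma~\ref{013} being stated for $\N\cup\dot{\N}_0$).

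Next I would expand $(R_\alpha H_x)H_y$ by applying Lemma~\ref{014} twice: first write $R_\alpha H_x$ as $R_{\alpha x}$ when $\rg(\alpha),x\in\dot{\N}_0$ and as $R_{\alpha x}+R_{\alpha\odot x}$ otherwise, then multiply each resulting ribbon on the right by $H_y$. To identify the ribbons that appear I would use
\[
\alpha x\odot y=\alpha(x\oplus y),\qquad (\alpha\odot x)\,y=\alpha\odot\beta,\qquad (\alpha\odot x)\odot y=\alpha\odot(x\oplus y),
\]
each an immediate consequence of the $\odot$--relations recorded just before Theorem~\ref{018} (whose validity rests on the restricted associativity of $\oplus$ from Remark~\ref{015}). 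Besides the wanted $R_{\alpha\beta}$, and $R_{\alpha\odot\beta}$ whenever $\alpha\odot\beta$ is defined, this expansion produces the two ``spurious'' ribbons $R_{\alpha(x\oplus y)}$ and/or $R_{\alpha\odot(x\oplus y)}$ exactly in the cases $c=1$; these come from $\oplus$--summing $y$ into the component on its left.

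Finally I would expand the correction $c\,R_\alpha H_{x\oplus y}$ by one further application of Lemma~\ref{014} and verify that it deletes precisely the spurious ribbons, the case analysis being driven by the dottedness of $\rg(\alpha)$ and $x$. If both are dotted, then $\alpha\odot\beta$ is undefined, $R_\alpha H_x=R_{\alpha x}$, the second multiplication yields at most $R_{\alpha\beta}+R_{\alpha(x\oplus y)}$, and the correction term (here $x\oplus y$ is again dotted, so $R_\alpha H_{x\oplus y}=R_{\alpha(x\oplus y)}$) cancels $R_{\alpha(x\oplus y)}$, leaving $R_{\alpha\beta}$; if at least one of $\rg(\alpha),x$ is non--dotted, then $\alpha\odot\beta$ is defined, $R_\alpha H_x=R_{\alpha x}+R_{\alpha\odot x}$, and after subtracting $c\,R_\alpha H_{x\oplus y}$ the surviving terms are exactly $R_{\alpha\beta}+R_{\alpha\odot\beta}$. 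Thus the single--term outcome occurs precisely when $\rg(\alpha),x\in\dot{\N}_0$, in agreement with Theorem~\ref{018}. I expect the one delicate point to be the bookkeeping at the three invocations of Lemma~\ref{014}: at each step one must select the correct branch from whether the current rightmost component is dotted (noting that $\rg(\alpha\odot x)=\rg(\alpha)\oplus x$ is dotted as soon as one of $\rg(\alpha),x$ is, given that they are not both dotted), and one must confirm the three $\odot$--identities above so that the cancellation of $R_{\alpha(x\oplus y)}$ and $R_{\alpha\odot(x\oplus y)}$ is exact. Running through the finitely many cases for $(\rg(\alpha),x,y)$ then completes the proof.
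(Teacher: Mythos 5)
Your proposal is correct and follows essentially the same route as the paper: both expand $R_{\beta}=R_xR_y-c\,R_{(x\oplus y)}$ (your $H_xH_y-c\,H_{x\oplus y}$, the same thing since length-one ribbons equal the corresponding $H$'s), apply Lemma~\ref{014} three times, and cancel the spurious terms via the $\odot$--identities $(\alpha x)\odot y=\alpha(x\oplus y)$, $(\alpha\odot x)y=\alpha\odot\beta$, $(\alpha\odot x)\odot y=\alpha\odot(x\oplus y)$, with the same case analysis on the dottedness of $\rg(\alpha),x,y$. Your explicit remark that Lemma~\ref{014} must be allowed the multiplier $\dot{0}$ addresses a notational slip the paper also glosses over.
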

\begin{proof}
Notice that, by Lemma \ref{014}, we have\[R_{\beta}=\left\{\begin{array}{ll}
R_xR_y&\text{if }x,y\in\dot{\N}_0,\\
R_xR_y-R_{x\odot y}&\text{otherwise}.
\end{array}\right.\]First, we assume that $\rg(\alpha),x\in\dot{\N}_0$. If $y\in\dot{\N}_0$, then\[R_{\alpha}R_{\beta}=R_{\alpha}(R_xR_y)=(R_{\alpha}R_x)R_y=R_{\alpha x}R_y=R_{\alpha\beta}.\]On the other hand, if $y\in\N$, then\[\begin{array}{rcl}R_{\alpha}R_{\beta}
&=&R_{\alpha}(R_xR_y-R_{x\odot y}),\\
&=&(R_{\alpha}R_x)R_y-R_{\alpha}R_{x\odot y},\\
&=&R_{\alpha x}R_y-R_{\alpha(x\odot y)},\\
&=&R_{\alpha xy}+R_{(\alpha x)\odot y}-R_{\alpha(x\odot y)},\\
&=&R_{\alpha\beta},\end{array}\]because $x\odot y\in\dot{\N}_0$ and $(\alpha x)\odot y=\alpha(x\odot y)$.

Now, we assume that $\{\rg(\alpha),x\}\not\subset\dot{\N}_0$. We will distinguish several cases.

In general, if $x\in\N$, we have\[\begin{array}{rcl}
R_{\alpha}R_{\beta}&=&R_{\alpha}(R_xR_y-R_{x\odot y}),\\
&=&(R_{\alpha}R_x)R_y-R_{\alpha}R_{x\odot y},\\
&=&(R_{\alpha x}+R_{\alpha\odot x})R_y-R_{\alpha}R_{x\odot y},\\
&=&R_{\alpha x}R_y+R_{\alpha\odot x}R_y-R_{\alpha}R_{x\odot y},\\
&=&R_{\alpha\beta}+R_{(\alpha x)\odot y}+R_{\alpha\odot x}R_y-R_{\alpha}R_{x\odot y}.
\end{array}\]This equation depends on the set the elements $\rg(\alpha)$ and $y$ belong to. If $\rg(\alpha)\in\N$, then $\rg(\alpha\odot x)\in\N$, hence\[R_{\alpha}R_{\beta}=R_{\alpha\beta}+R_{(\alpha x)\odot y}+R_{(\alpha\odot x)y}+R_{(\alpha\odot x)\odot y}-R_{\alpha(x\odot y)}-R_{\alpha\odot(x\odot y)}=R_{\alpha\beta}+R_{\alpha\odot\beta}\]because $(\alpha x)\odot y=\alpha(x\odot y)$ and $\alpha\odot(x\odot y)=(\alpha\odot x)\odot y$. Now, consider $\rg(\alpha)\in\dot{\N}_0$. Notice that $\rg(\alpha\odot x)$ belongs to $\dot{\N}_0$ as well. If $y\in\N$, then\[R_{\alpha}R_{\beta}=R_{\alpha\beta}+R_{(\alpha x)\odot y}+R_{(\alpha\odot x)y}+R_{(\alpha\odot x)\odot y}-R_{\alpha(x\odot y)}-R_{\alpha\odot(x\odot y)}=R_{\alpha\beta}+R_{\alpha\odot\beta}.\]On the other hand, if $y\in\dot{\N}_0$, then\[R_{\alpha}R_{\beta}=R_{\alpha\beta}+R_{(\alpha x)\odot y}+R_{(\alpha\odot x)y}-R_{\alpha(x\odot y)}=R_{\alpha\beta}+R_{\alpha\odot\beta}.\]Finally, consider $x\in\dot{\N}_0$. As $\{\rg(\alpha),x\}\not\subset\dot{\N}_0$, then $\rg(\alpha)\in\N$. If $y\in\dot{\N}_0$, then\[R_{\alpha}R_{\beta}=(R_{\alpha}R_x)R_y=(R_{\alpha x}+R_{\alpha\odot x})R_y=R_{\alpha\beta}+R_{\alpha\odot\beta},\]because $(\alpha\odot x)y=\alpha\odot(xy)$. If $y\in\N$, then\[\begin{array}{rcl}
R_{\alpha}R_{\beta}&=&(R_{\alpha}R_x)R_y-R_{\alpha}R_{x\odot y},\\
&=&(R_{\alpha x}+R_{\alpha\odot x})R_y-R_{\alpha(x\odot y)}-R_{\alpha\odot(x\odot y)},\\
&=&R_{\alpha\beta}+R_{(\alpha x)\odot y}+R_{(\alpha\odot x)y}+R_{(\alpha\odot x)\odot y}-R_{\alpha(x\odot y)}-R_{\alpha\odot(x\odot y)},\\
&=&R_{\alpha\beta}+R_{\alpha\odot\beta}.
\end{array}\]This concludes the proof.
\end{proof}


\begin{proof}[Proof of Theorem \ref{018}]
Due to Lemma \ref{014} and Lemma \ref{017}, the result is true when $\ell(\beta)\leq2$. So, we will proceed by induction on the length of $\beta$. Assume that $\ell(\beta)\geq3$, that is, $\beta=\gamma x$ for some dotted composition $\gamma$ satisfying $\ell(\gamma)=\ell(\beta)-1$, where $x=\rg(\beta)$. In particular, we have $\beta_1=\gamma_1$ and $\rg(\gamma)\neq\beta_1$. Moreover, by Lemma \ref{014}, we obtain\[R_{\beta}=\left\{\begin{array}{ll}
R_{\gamma}R_x&\text{if }\rg(\gamma),x\in\dot{\N}_0,\\
R_{\gamma}R_x-R_{\gamma\odot x}&\text{otherwise}.
\end{array}\right.\]We will distinguish some cases. 

First, assume that $\rg(\alpha),\beta_1\in\dot{\N}_0$. If $\rg(\gamma),x\in\dot{\N}_0$, by inductive hypothesis, we obtain\[R_{\alpha}R_{\beta}=R_{\alpha}(R_{\gamma}R_x)=(R_{\alpha}R_{\gamma})R_x=R_{\alpha\gamma}R_x=R_{\alpha\beta}.\]On the other hand, if $\{\rg(\gamma),x\}\not\subset\dot{\N}_0$, inductive hypothesis implies that\[\begin{array}{rcl}
R_{\alpha}R_{\beta}&=&R_{\alpha}(R_{\gamma}R_x-R_{\gamma\odot x}),\\
&=&(R_{\alpha}R_{\gamma})R_x-R_{\alpha}R_{\gamma\odot x},\\
&=&R_{\alpha\gamma}R_x-R_{\alpha(\gamma\odot x)},\\
&=&R_{\alpha\beta}+R_{(\alpha\gamma)\odot x}-R_{\alpha(\gamma\odot x)},\\
&=&R_{\alpha\beta},
\end{array}\]because $(\gamma\odot x)_1\in\dot{\N}$ and $(\alpha\gamma)\odot x=\alpha(\gamma\odot x)$.

Now, assume that $\{\rg(\alpha),\beta_1\}\not\subset\dot{\N}_0$. If $\rg(\gamma),x\in\dot{\N}_0$, by inductive hypothesis, we obtain\[\begin{array}{rcl}
R_{\alpha}R_{\beta}&=&(R_{\alpha}R_{\gamma})R_x,\\
&=&(R_{\alpha\gamma}+R_{\alpha\odot\gamma})R_x,\\
&=&R_{\alpha\gamma}R_x+R_{(\alpha\odot\gamma)}R_x,\\
&=&R_{\alpha\gamma x}+R_{(\alpha\odot\gamma)x},\\
&=&R_{\alpha\beta}+R_{\alpha\odot\beta},
\end{array}\]because $\rg(\alpha\gamma)=\rg(\gamma),\rg(\alpha\odot\gamma)\in\dot{\N}_0$ and $(\alpha\odot\gamma)x=\alpha\odot(\gamma x)=\alpha\odot\beta$. On the other hand, if $\{\rg(\gamma),x\}\not\subset\dot{N}_0$, then\[R_{\alpha}R_{\beta}=R_{\alpha}(R_{\gamma}R_x-R_{\gamma\odot x})=(R_{\alpha}R_{\gamma})R_x-R_{\alpha}R_{\gamma\odot x}.\]Since $(\gamma\odot x)_1=\beta_1\neq x$ and $\rg(\alpha\odot\gamma)=\rg(\gamma)$, inductive hypothesis implies that\[\begin{array}{rcl}
R_{\alpha}R_{\beta}&=&(R_{\alpha\gamma}+R_{\alpha\odot\gamma})R_x-R_{\alpha(\gamma\odot x)}-R_{\alpha\odot(\gamma\odot x)},\\
&=&R_{\alpha\gamma}R_x+R_{\alpha\odot\gamma}R_x-R_{\alpha(\gamma\odot x)}-R_{\alpha\odot(\gamma\odot x)},\\
&=&R_{\alpha\beta}+R_{(\alpha\gamma)\odot x}+R_{(\alpha\odot\gamma)x}+R_{(\alpha\odot\gamma)\odot x}-R_{\alpha(\gamma\odot x)}-R_{\alpha\odot(\gamma\odot x)},\\
&=&R_{\alpha\beta}+R_{\alpha\odot\beta},
\end{array}\]because $\alpha\odot(\gamma\odot x)=(\alpha\odot\gamma)x=\alpha\odot(\gamma x)=\alpha\odot\beta$ and $\alpha(\gamma\odot x)=(\alpha\gamma)\odot x$. This concludes the proof.
\end{proof}

Now, by using the previous results, we write noncommutative Ribbon Schur function in superspace in terms of the noncommutative homogeneous functions in superspace.

\begin{pro}\label{047}
Let $\alpha$ be a dotted composition. Then\[R_{\alpha}=\sum_{\alpha\preceq\beta}(-1)^{\ell(\alpha)-\ell(\beta)}H_{\beta}\]
\end{pro}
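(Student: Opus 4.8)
The plan is to recognise the claimed identity as the Möbius inversion of the defining recursion, and to prove it by Noetherian induction ``from the top'' of the covering poset. First I would rewrite the definition $R_\alpha=H_\alpha-\sum_{\alpha\prec\beta}R_\beta$ as $H_\alpha=\sum_{\alpha\preceq\beta}R_\beta$, so that $R$ and $H$ are related by summation over the principal up-set of $\alpha$. Reading off the coefficient of each $H_\gamma$ after substituting the (inductively known) formula for every $R_\beta$ with $\beta\succ\alpha$, the proposition reduces to the single numerical identity
\[\sum_{\alpha\preceq\beta\preceq\gamma}(-1)^{\ell(\beta)}=0\qquad\text{whenever }\alpha\prec\gamma.\]

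For the induction to be well founded I would grade each dotted composition by its number of \emph{mergeable boundaries}, i.e.\ the number of adjacent pairs of components that are both non-dotted; this number is $0$ exactly on the maximal elements and drops by one at every covering step, so $\beta\succ\alpha$ forces a strictly smaller grade and the recursion terminates. The base case $\alpha$ maximal is immediate, since then $R_\alpha=H_\alpha$ and the right-hand side has the single term $(-1)^0H_\alpha$.

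The combinatorial heart is the displayed identity, which I would obtain by showing that every interval $[\alpha,\gamma]$ is a Boolean lattice. The key observation---and the place where Remark \ref{015} is used---is that covering only fuses consecutive non-dotted components, so a dotted component always acts as a wall that no merge can cross; within each maximal run of non-dotted components the operation $\oplus$ is ordinary addition and hence associative. Consequently the elements $\beta$ with $\alpha\preceq\beta\preceq\gamma$ are in bijection with the subsets of the $k:=\ell(\alpha)-\ell(\gamma)$ boundaries of $\alpha$ that get dissolved on the way to $\gamma$, each such subset being freely choosable and producing a distinct tuple (distinct subsets give distinct partial-sum break patterns). A $\beta$ obtained by dissolving $j$ of these boundaries has $\ell(\beta)=\ell(\alpha)-j$, and there are $\binom{k}{j}$ of them, so
\[\sum_{\alpha\preceq\beta\preceq\gamma}(-1)^{\ell(\beta)}=(-1)^{\ell(\alpha)}\sum_{j=0}^{k}\binom{k}{j}(-1)^{j}=(-1)^{\ell(\alpha)}(1-1)^{k}=0,\]
since $k\geq1$.

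With this identity in hand the inductive step is routine: in $R_\alpha=H_\alpha-\sum_{\alpha\prec\beta}R_\beta$ I substitute the formula for each $R_\beta$, interchange the two summations, and collect the coefficient of a fixed $H_\gamma$, which equals $1$ when $\gamma=\alpha$ and equals $-\sum_{\alpha\prec\beta\preceq\gamma}(-1)^{\ell(\beta)-\ell(\gamma)}=(-1)^{\ell(\alpha)-\ell(\gamma)}$ otherwise, by the identity above. I expect the main obstacle to be the Boolean-interval claim rather than the algebra: one must argue carefully that no merge crosses a dotted wall and that distinct sets of dissolved boundaries really yield distinct dotted compositions, so that the count is genuinely $\binom{k}{j}$. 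The facts assembled in Lemma \ref{013} and Equation \ref{016} make exactly this local structure explicit and would be the tools I cite.
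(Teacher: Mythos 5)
Your overall strategy --- rewriting the definition as $H_{\alpha}=\sum_{\alpha\preceq\beta}R_{\beta}$ and inverting it by showing that $\sum_{\alpha\preceq\beta\preceq\gamma}(-1)^{\ell(\beta)}=0$ whenever $\alpha\prec\gamma$ --- is sound and genuinely different from the paper's proof, which instead inducts on $\ell(\alpha)$ by peeling off the rightmost component and invoking Lemma \ref{014} together with the description of $\cv(\alpha x)$ in Lemma \ref{013}. However, the justification you give for the key identity rests on a false premise. You assert that ``covering only fuses consecutive non-dotted components, so a dotted component always acts as a wall that no merge can cross.'' This misreads the covering relation: $\beta$ covers $\alpha$ when two consecutive components \emph{that are not both dotted} are $\oplus$-summed, so a dotted component may absorb non-dotted neighbours on either side. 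For example $(1,\dot{0},1)\prec(\dot{1},1)\prec(\dot{2})$, so $\dot{0}$ is not a wall; under your reading $(1,\dot{0},1)$ would be maximal, which it is not. For the same reason your proposed grading (the number of adjacent pairs that are \emph{both non-dotted}) neither vanishes exactly on the maximal elements nor strictly decreases under covering (it equals $0$ for both $(1,\dot{0},1)$ and $(\dot{1},1)$), so the Noetherian induction as set up is not well founded; inducting on $\ell(\alpha)$, which strictly drops along every covering, is the easy fix.

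The Boolean-interval conclusion itself is nevertheless true, but for a different reason than the one you give: each component of $\gamma\succeq\alpha$ is the $\oplus$-sum of a consecutive block of components of $\alpha$ containing \emph{at most one} dotted component (two dotted components can never land in the same block, since the final merge joining them would be forbidden), and on such blocks $\oplus$ is associative by Remark \ref{015}. Within each block every subset of the internal boundaries may be dissolved independently --- at every stage at most one of the resulting sub-blocks is dotted, so every required merge is legal --- and distinct subsets give distinct $\beta$ because the partial degree sums determine the block structure. With that repair, your count $\sum_{j}\binom{k}{j}(-1)^{j}=0$ and the rest of the inversion go through, and the argument becomes a clean alternative to the paper's recursive computation.
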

\begin{proof}
We proceed by induction on the length of $\alpha$. If $\ell(\alpha)=1$ the result is obvious. If $\ell(\alpha)>1$, assume that the result is true for dotted compositions of smaller length. Let $\alpha'$ such that $\alpha=\alpha'x$ with $x=\rg(\alpha)$. We distinguish two cases.

If $\rg(\alpha'),x\in\dot{\N}_0$, Lemma \ref{014} implies that $R_{\alpha}=R_{\alpha'}H_x$. So, by inductive hypothesis\[R_{\alpha}=\sum_{\alpha'\preceq\gamma}(-1)^{\ell(\alpha')-\ell(\gamma)}H_{\gamma}H_x=\sum_{\alpha'\preceq\gamma}(-1)^{\ell(\alpha')+1-\ell(\gamma x)}H_{\gamma x}.\]Since $\rg(\alpha'),x\in\dot{\N}_0$, then $\cv(\alpha)=\{\beta\mid\alpha\preceq\beta\}=\{\gamma x\mid \alpha'\preceq\gamma\}=\cv(\alpha' x)$, hence\[R_{\alpha}=\sum_{\alpha\preceq\beta}(-1)^{\ell(\alpha)-\ell(\beta)}H_{\beta}.\]

Now, if $\{\rg(\alpha'),x\}\not\subset\dot{\N}_0$, Lemma \ref{014} implies that $R_{\alpha}=R_{\alpha'}H_x-R_{\alpha'\odot x}$. So, by inductive hypothesis, we have\[\begin{array}{rcl}
R_{\alpha}&=&{\displaystyle\sum_{\alpha'\preceq\gamma}(-1)^{\ell(\alpha')-\ell(\gamma)}H_{\gamma}H_x-\sum_{\alpha'\odot x\preceq\gamma}(-1)^{\ell(\alpha'\odot x)-\ell(\gamma)}H_{\gamma}},\\[3mm]
&=&{\displaystyle\sum_{\alpha'\preceq\gamma}(-1)^{\ell(\alpha')+1-\ell(\gamma x)}H_{\gamma x}+\sum_{\alpha'\odot x\preceq\gamma}(-1)^{\ell(\alpha'\odot x)+1-\ell(\gamma)}H_{\gamma}},\\[3mm]
&=&{\displaystyle\sum_{\alpha'\preceq\gamma}(-1)^{\ell(\alpha)-\ell(\gamma x)}H_{\gamma x}+\sum_{\alpha'\odot x\preceq\gamma}(-1)^{\ell(\alpha)-\ell(\gamma)}H_{\gamma}},\\[3mm]
&=&{\displaystyle\sum_{\alpha\preceq\beta}(-1)^{\ell(\alpha)-\ell(\beta)}H_{\beta}}.
\end{array}\]
\end{proof}

In what follows of this section, we give explicit formulas to write other noncommutative functions in superspace, defined in previous sections, in terms of the noncommutative Ribbon Schur functions in superspace.

\begin{pro}\label{033}
$\Psi_0=R_{\dot{0}}$, and for every $n\geq1$, we have $\Psi_n=(-1)^nR_{(1^n,\dot{0})}$.
\end{pro}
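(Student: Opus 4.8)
The plan is to dispose of $n=0$ directly and to obtain the case $n \ge 1$ by matching two expansions in the homogeneous basis. For $n = 0$, the dotted composition $(\dot 0)$ has length one, hence is maximal with respect to $\preceq$, so $R_{(\dot 0)} = H_{(\dot 0)} = \tH_0 = \Psi_0$ by definition.

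For $n \ge 1$ I would expand both sides in the basis $\{H_\beta\}$ and compare. Applying Proposition \ref{047} to $\alpha = (1^n, \dot 0)$ gives $R_{(1^n, \dot 0)} = \sum_{(1^n, \dot 0) \preceq \beta} (-1)^{(n+1) - \ell(\beta)} H_\beta$, so the whole problem reduces to an explicit description of the cover set $\cv((1^n, \dot 0))$ together with the bookkeeping of lengths. The key observation is that $(1^n, \dot 0)$ has a single dotted entry, so covering never forces two dotted components to meet and every merge of adjacent components is admissible; consequently the covers of $(1^n, \dot 0)$ correspond exactly to groupings of its entries into contiguous blocks, each block being $\oplus$-summed (well defined and associative by Remark \ref{015}, as each block carries at most one dot). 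Since $\dot 0$ sits at the right end, its block is a suffix formed by $\dot 0$ and the $k$ adjacent ones to its left, which $\oplus$-sum to $\dot k$, while the remaining $n-k$ ones are partitioned into an arbitrary usual composition $\gamma$ of $n-k$. Hence
\[\cv((1^n, \dot 0)) = \{(\gamma, \dot k) : 0 \le k \le n,\ \gamma \preceq (n-k)\},\]
where for $k = n$ the composition $\gamma$ is empty and the cover is $(\dot n)$. (This may instead be derived by iterating Lemma \ref{013} on the rightmost entry.)

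Granting this, I substitute $\beta = (\gamma, \dot k)$, note $\ell((1^n, \dot 0)) = n+1$ and $\ell((\gamma, \dot k)) = \ell(\gamma) + 1$, so that $(-1)^{(n+1) - \ell(\beta)} = (-1)^{n - \ell(\gamma)}$ and multiplication by $(-1)^n$ leaves $(-1)^{\ell(\gamma)}$. Splitting off the term $k = n$, which contributes $\tH_n$, and rewriting $H_{(\gamma, \dot k)} = H_\gamma \tH_k$ for $k < n$ yields
\[(-1)^n R_{(1^n, \dot 0)} = \tH_n + \sum_{k=0}^{n-1} \sum_{\gamma \preceq (n-k)} (-1)^{\ell(\gamma)} H_\gamma \tH_k,\]
which is exactly the expansion of $\Psi_n$ provided by Proposition \ref{010}; this proves the claim. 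The only substantive step is the description of $\cv((1^n, \dot 0))$; once that is pinned down the signs and the rewriting are purely mechanical.
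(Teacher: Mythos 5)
Your proof is correct, but it takes a genuinely different route from the paper's. The paper argues by induction on $n$: it feeds the recursion $\Psi_n=\tH_n-\sum_{k=0}^{n-1}H_{n-k}\Psi_k$ into the inductive hypothesis, applies the product formula of Theorem \ref{018} in the form $R_{n-k}R_{(1^k,\dot{0})}=R_{(n-k,1^k,\dot{0})}+R_{(n-k+1,1^{k-1},\dot{0})}$, and lets the resulting sum telescope down to $(-1)^nR_{(1^n,\dot{0})}$. You instead expand both sides in the $\{H_\beta\}$ basis and match coefficients, which turns the whole statement into the single combinatorial identity $\cv((1^n,\dot{0}))=\{(\gamma,\dot{k}):0\le k\le n,\ \gamma\preceq(n-k)\}$; your justification of that identity is sound, since $(1^n,\dot{0})$ has a unique dotted entry, so no merge is ever forbidden and the covers are exactly the contiguous block groupings, with the block containing $\dot{0}$ necessarily a suffix $\oplus$-summing to $\dot{k}$. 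The sign bookkeeping $(-1)^{(n+1)-\ell(\beta)}=(-1)^{n-\ell(\gamma)}$ is right, and the resulting expression coincides term by term with Proposition \ref{010}. What your approach buys is transparency: it exhibits $(-1)^nR_{(1^n,\dot{0})}$ and $\Psi_n$ as literally the same $2^n$-term sum in the $H$-basis, with no telescoping. What it costs is self-containedness: the inductive work has not disappeared but has been delegated to Proposition \ref{010} (which the paper states with only a one-line indication of proof) and to Proposition \ref{047}, whereas the paper's argument needs only the defining recursion \ref{019} and Theorem \ref{018}. Since both cited propositions precede the statement and neither depends on it, there is no circularity, and your proof stands.
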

\begin{proof}
We proceed by induction on $n$. Note that $\Psi_0=\tH_0=R_{\dot{0}}$, and that\[\Psi_1=\tH_1-H_1\Psi_0=R_{\dot{1}}-R_1R_{\dot{0}}=R_{\dot{1}}-R_{(1,\dot{0})}-R_{\dot{1}}=-R_{(1,\dot{0})}.\]Assume the result is true for all $k<n$. So, by definition and inductive hypothesis, we obtain the following\[\Psi_n=\tH_n-\sum_{k=0}^{n-1}H_{n-k}\Psi_k=R_{\dot{n}}-R_nR_{\dot{0}}+R_{n-1}R_{(1,\dot{0})}-\sum_{k=2}^{n-1}R_{n-k}(-1)^kR_{(1^k,\dot{0})}.\]Now, Theorem \ref{018} implies that $R_{n-k}R_{(1^k,\dot{0})}=R_{(n-k,1^k,\dot{0})}+R_{(n-(k-1),1^{k-1},\dot{0})}$, hence\[\begin{array}{rcl}
\Psi_n&=&{\displaystyle R_{(n-1,1,\dot{0})}-\sum_{k=2}^{n-1}(-1)^kR_{(n-k,1^k,\dot{0})}-\sum_{k=2}^{n-1}(-1)^kR_{(n-(k-1),1^{k-1},\dot{0})}},\\
&=&{\displaystyle R_{(n-1,1,\dot{0})}-\sum_{k=2}^{n-1}(-1)^kR_{(n-k,1^k,\dot{0})}+\sum_{k=2}^{n-1}(-1)^{k-1}R_{(n-(k-1),1^{k-1},\dot{0})}},\\
&=&{\displaystyle-\sum_{k=1}^{n-2}(-1)^kR_{(n-k,1^k,\dot{0})}+(-1)^nR_{(1^n,\dot{0})}+\sum_{k=1}^{n-2}(-1)^kR_{(n-k,1^k,\dot{0})}}.
\end{array}\]Therefore $\Psi_n=(-1)^nR_{(1^n,\dot{0})}$.
\end{proof}

Note that $1^n$ is the minimal element in the lattice of compositions of $n$ with the usual order. This implies that\begin{equation}\label{035}R_{(1^n)}=\sum_{1^n\preceq\beta}(-1)^{n-\ell(\beta)}H_{\beta}=(-1)^n\sum_{1^n\preceq\beta}(-1)^{\ell(\beta)}H_{\beta}=(-1)^n\sum_{\beta\preceq(n)}(-1)^{\ell(\beta)}H_{\beta}=(-1)^nS(H_n)=S_n.\end{equation}

In the following proposition, we write $\tS_n$ in terms of noncommutative Ribbon Schur functions in superspace.

\begin{pro}
We have $\tS_n={\displaystyle\sum_{k=0}^nR_{(1^k,\dot{0},1^{n-k})}+\sum_{k=1}^nR_{(1^{k-1},\dot{1},1^{n-k})}}$, for all $n\geq1$.
\end{pro}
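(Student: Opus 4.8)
The plan is to rewrite $\tS_n$ entirely in terms of the Ribbon functions $R_\alpha$ by feeding known conversions into the expansion of Proposition \ref{011}. Starting from $\tS_n=\sum_{k=0}^n(-1)^{n-k}\Psi_{n-k}S_k$, I would replace each $\Psi_{n-k}$ using Proposition \ref{033} and each $S_k$ using Equation \ref{035}. Since Proposition \ref{033} gives $\Psi_0=R_{\dot 0}$ and $\Psi_m=(-1)^mR_{(1^m,\dot 0)}$ for $m\geq1$, the sign $(-1)^{n-k}$ is exactly cancelled, so that $(-1)^{n-k}\Psi_{n-k}=R_{(1^{n-k},\dot 0)}$ uniformly for $0\leq k\leq n$ (reading $(1^0,\dot 0)$ as $(\dot 0)$). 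Together with $S_k=R_{(1^k)}$, this collapses the expansion to the sign-free form
\[\tS_n=\sum_{k=0}^nR_{(1^{n-k},\dot 0)}R_{(1^k)}.\]

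The next step is to expand each product with the multiplication rule of Theorem \ref{018}, applied to $\alpha=(1^{n-k},\dot 0)$ and $\beta=(1^k)$. For $k\geq1$ we have $\rg(\alpha)=\dot 0\in\dot{\N}_0$ but $\beta_1=1\notin\dot{\N}_0$, so the product lands in the ``otherwise'' branch $R_{\alpha}R_{\beta}=R_{\alpha\beta}+R_{\alpha\odot\beta}$. Here the concatenation is $\alpha\beta=(1^{n-k},\dot 0,1^k)$, while the overlap uses $\dot 0\oplus1=\dot 1$ and hence $\alpha\odot\beta=(1^{n-k},\dot 1,1^{k-1})$. The degenerate case $k=0$ contributes only $R_{(1^n,\dot 0)}$, since then $\beta$ is empty and $R_{()}=1$, producing no overlap term.

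Finally I would collect the summands into the two families of the statement via the substitution $j=n-k$. The concatenation terms become $R_{(1^j,\dot 0,1^{n-j})}$ for $0\leq j\leq n$, which is exactly the first sum (the $j=n$ term absorbing the degenerate $k=0$ contribution). Under the same substitution the overlap terms, present only for $1\leq k\leq n$, become $R_{(1^j,\dot 1,1^{n-1-j})}$ for $0\leq j\leq n-1$; comparing this against the second sum of the claim under its reindexing $k-1\mapsto j$ completes the identification. The only delicate points are the bookkeeping of the degenerate compositions at the endpoints and the verification that $\dot 0\oplus1=\dot 1$ inserts the correct dotted entry into each overlap; I expect this endpoint bookkeeping to be the only real obstacle, the rest being a direct application of Theorem \ref{018}.
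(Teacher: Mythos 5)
Your argument is correct and follows essentially the same route as the paper: both start from Proposition \ref{011}, substitute $(-1)^{m}\Psi_{m}=R_{(1^{m},\dot{0})}$ and $S_k=R_{(1^k)}$ via Proposition \ref{033} and \ref{035}, and then expand each product $R_{(1^{m},\dot{0})}R_{(1^k)}$ with Theorem \ref{018}, using $\dot{0}\oplus1=\dot{1}$. The only difference is cosmetic indexing (the paper isolates the $S_0$ term first, you reindex at the end), and your endpoint bookkeeping checks out.
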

\begin{proof}
By Proposition \ref{011}, Proposition \ref{033} and \ref{035}, we have\[\tS_n=(-1)^n\Psi_n+\sum_{k=0}^{n-1}(-1)^k\Psi_kS_{n-k}=R_{(1^n,\dot{0})}+\sum_{k=0}^{n-1}R_{(1^k,\dot{0})}R_{(1^{n-k})}.\]Now, by using Theorem \ref{018}, we obtain the result\[\tS_n=R_{(1^n,\dot{0})}+\sum_{k=0}^{n-1}\left[R_{(1^k,\dot{0},1^{n-k})}+R_{(1^k,\dot{1},1^{n-k-1})}\right]=\sum_{k=0}^nR_{(1^k,\dot{0},1^{n-k})}+\sum_{k=1}^nR_{(1^{k-1},\dot{1},1^{n-k})}.\]
\end{proof}

Recall, for the classic case \cite[Corollary 3.14]{GKLLRT95}, we have\begin{equation}\label{032}P_n={\displaystyle\sum_{k=0}^{n-1}(-1)^kR_{(1^k,n-k)}},\qquad n\geq1.\end{equation}

To finish, we give a explicit formula of $\tP_n$ in terms of noncommutative Ribbon Schur functions in superspace.
\begin{pro}\label{034}
For every $n\geq1$, we have
\[(n+1)\tP_n=-R_{(\dot{0},n)}+(-1)^nR_{(1^n,\dot{0})}+\sum_{k=1}^{n-1}(-1)^{k+1}\left(\sum_{i=1}^kR_{(1^{i-1},\dot{1},1^{k-i},n-k)}+\sum_{i=0}^kR_{(1^i,\dot{0},1^{k-i},n-k)}\right).\]
\end{pro}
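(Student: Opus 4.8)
The plan is to start from the Lie-bracket expression for $(n+1)\tP_n$ furnished by Proposition \ref{005},
\[
(n+1)\tP_n=(n+1)\Psi_n+\sum_{k=0}^{n-1}[P_{n-k},\Psi_k],
\]
and to rewrite every factor on the right in the Ribbon basis. By Proposition \ref{033} we have $\Psi_0=R_{\dot{0}}$ and $\Psi_k=(-1)^kR_{(1^k,\dot{0})}$ for $k\geq1$, while Equation \ref{032} gives $P_m=\sum_{j=0}^{m-1}(-1)^jR_{(1^j,m-j)}$. Substituting these and expanding each product $R_\alpha R_\beta$ with Theorem \ref{018} turns $(n+1)\tP_n$ into an explicit signed sum of Ribbon functions indexed by dotted compositions of degree $n+1$ carrying a single dotted component. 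In every product that occurs here either $\rg(\alpha)$ or $\beta_1$ lies in $\N$, so Theorem \ref{018} always contributes two summands $R_{\alpha\beta}+R_{\alpha\odot\beta}$.

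Next I would sort the resulting terms by the value and position of their dotted component. The products $P_{n-k}\Psi_k$ (the ``$+$'' half of each bracket, together with $(n+1)\Psi_n$) produce, for $k\geq1$, only terms with a trailing $\dot{0}$, namely the concatenations $R_{(1^j,n-k-j,1^k,\dot{0})}$ and the merges $R_{(1^j,n-k-j+1,1^{k-1},\dot{0})}$; the products $\Psi_k P_{n-k}$ (the ``$-$'' half) produce the terms in which $\dot{0}$ precedes a nonempty non-dotted tail, namely $R_{(1^k,\dot{0},1^j,n-k-j)}$, together with the $\dot{1}$-terms $R_{(1^k,\dot{1},1^{j-1},n-k-j)}$ coming from their merges. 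The only outliers are the ``spurious'' merge terms $R_{(1^j,\dot{m})}$ with $m=n-k-j$, which arise from $P_n\Psi_0$ and from the $j=0$ merges of $\Psi_k P_{n-k}$.

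With this classification I would verify three families of cancellations. (i) The spurious $R_{(1^j,\dot{n-j})}$ from the merge in $P_n\Psi_0$ (coefficient $+(-1)^j$) is cancelled by the $j'=0$ merge term $R_{(1^j,\dot{n-j})}$ of $\Psi_j P_{n-j}$ (coefficient $-(-1)^j$), with $\Psi_0P_n$ handling the case $j=0$; hence no term with dotted part $\dot{m}$, $m\geq2$ survives. (ii) Every trailing-$\dot{0}$ term other than $R_{(1^n,\dot{0})}$ cancels telescopically between the concatenation and merge outputs of the $P_{n-k}\Psi_k$'s, while the coefficient of $R_{(1^n,\dot{0})}$ collapses from $(n+1)(-1)^n+n(-1)^{n-1}$ to exactly $(-1)^n$. (iii) Reindexing by $K=k+j$ and $i=k$, the $\dot{0}$-in-the-middle terms of $\Psi_0P_n$ and of the $\Psi_kP_{n-k}$ reassemble (each with sign $(-1)^{K+1}$) into $-R_{(\dot{0},n)}+\sum_{k=1}^{n-1}(-1)^{k+1}\sum_{i=0}^{k}R_{(1^i,\dot{0},1^{k-i},n-k)}$, and symmetrically the $\dot{1}$-terms from the merges reassemble into $\sum_{k=1}^{n-1}(-1)^{k+1}\sum_{i=1}^{k}R_{(1^{i-1},\dot{1},1^{k-i},n-k)}$, which is precisely the claimed expression.

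The main obstacle is the bookkeeping in steps (ii) and (iii): one must confirm that the doubly-indexed concatenation and merge sums telescope correctly and that the reindexing $K=k+j$ maps the scattered contributions of the various brackets onto the single clean double sum, all while keeping the signs $(-1)^{k+j+1}$ straight. I would tame this by fixing a target dotted composition $\gamma$ of degree $n+1$ with one dotted part and computing the total coefficient of $R_\gamma$ through a short case split on whether its dotted part is $\dot{0}$ or $\dot{1}$, whether it is leading, internal, or trailing, and whether $\gamma=(1^n,\dot{0})$; each case then reduces to a finite alternating-sum identity. Should the global collection prove unwieldy, a safe fallback is an induction on $n$ feeding the recursion of Proposition \ref{006} into the inductive $R$-expansion of $\tP_{k}$ for $k<n$, but I expect the direct coefficient extraction above to be the cleaner route.
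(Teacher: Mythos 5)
Your argument is correct, and I verified the three cancellation families you describe: the spurious terms $R_{(1^j,\dot{m})}$ from the $j'=0$ merges do cancel in pairs between $P_n\Psi_0$ and the $\Psi_jP_{n-j}$; for a trailing-$\dot{0}$ index $(1^a,m,1^b,\dot{0})$ with $m\geq2$ the concatenation output of $P_{n-b}\Psi_b$ cancels the merge output of $P_{n-b-1}\Psi_{b+1}$ (with $k=0$ playing the role of $b=0$), while $R_{(1^n,\dot{0})}$ receives $(n+1)(-1)^n$ from $(n+1)\Psi_n$ and $(-1)^{n-1}$ from each of the $n$ products $P_{n-k}\Psi_k$, leaving $(-1)^n$; and the substitution $K=k+j$, $i=k$ (resp.\ $i=k+1$) turns the surviving $-\Psi_kP_{n-k}$ terms into exactly the stated double sums with sign $(-1)^{K+1}$. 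However, this is not the paper's route: the paper's proof is precisely the fallback you name in your last paragraph, namely an induction on $n$ that feeds the recursion of Proposition \ref{006}, $(n+1)\tP_n=(n+1)\tH_n-\sum_{k=1}^{n}\tH_{n-k}P_k-\sum_{k=0}^{n-1}(k+1)H_{n-k}\tP_k$, together with \ref{032} and Theorem \ref{018}, into the inductive Ribbon expansions of $\tH_m$ and $\tP_k$ for $k<n$. Your primary route via Proposition \ref{005} and Proposition \ref{033} is genuinely different and arguably cleaner: it is non-inductive, every factor enters already in the Ribbon basis (so only Theorem \ref{018} is invoked, never the $H$-to-$R$ change of basis), and the final answer drops out of a single coefficient extraction organized by the position and value of the unique dotted part. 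What the paper's induction buys in exchange is that one never has to manage the global bookkeeping of a doubly indexed alternating sum; the price is that the inductive step still requires essentially the same product expansions, just spread across the recursion. Either way the heavy lifting is Theorem \ref{018}, and your case split on the target composition $\gamma$ is exactly the right way to make the direct computation rigorous.
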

\begin{proof}
The result follows by applying inductively Proposition \ref{006}, \ref{032} and Theorem \ref{018}.
\end{proof}

\section{Related structures}\label{030}

\subsection{$\sNSym$ as a Hopf algebra of trees}\label{029}

In this subsection, we give a realization of $\sNSym$ as a Hopf algebra of planar rooted trees.

Hopf algebras generated by trees have been widely studied and there is an extensive literature on it. See for example \cite{Ho03,PreFo09,ArMa21}. The \emph{Connes--Kreimer Hopf algebra}, generated by rooted trees, was introduced in \cite{CoKr98} and its noncommutative version $H_{PR}$ for planar rooted trees was given simultaneously in \cite{Fo02,Ho03}. The coalgebra structure of these Hopf algebras can be described in terms of cuts of trees.

It was shown in \cite{PreFo09,Hof09} that $\NSym$ can be released as a Hopf subalgebra of $H_{PR}$. Here, we extend this description for $\sNSym$, by identifying its generators with certain type of planar rooted trees. Cf. \cite{Dol21}.

For a planar rooted tree $t$, we define the \emph{degree} of it, denoted by $\deg(t)$, as the number of its non-root nodes. Recall that a \emph{ladder tree} is a planar rooted tree with only one branch. For $n\geq0$, we will denote by $t_n$ the unique ladder tree of degree $n$, and we will identify $H_n$ with this tree. Respectively, the generator $\tH_n$ is identified with the planar rooted tree $t_{\dot{n}}$ of degree $n+1$ obtained by gluing the roots of a coloured ladder tree of degree one with $t_n$. The coloured node of $t_{\dot{n}}$ represents the fermionic degree of $\tH_n$. For instance, for $n\leq 2$, we have:\begin{figure}[H]\figsix\end{figure}Given a dotted composition $\alpha$ with $k=\ell(\alpha)$, we identify $H_{\alpha}=H_{\alpha_1}\cdots H_{\alpha_k}$ with the forest $t_{\alpha}=t_{\alpha_1}\cdots t_{\alpha_k}$, and $1$ is identified with $t_0$. For instance, for $\alpha=(\dot{2},\dot{0},2,\dot{3})$, we have:\begin{figure}[H]\figsev\end{figure}Hence, the product is obtained by concatenating forests with the assumption that $t_{\alpha}t_0=t_0t_{\alpha}=t_{\alpha}$ for all dotted composition $\alpha$.

To the describe the coproduct, we consider \emph{admissible cuts} on trees. A \emph{cut} on a planar rooted tree $t$ is any subset of edges of it. A cut is called \emph{admissible} if each branch of the tree contains at most one edge of it. The set of all admissible cuts of a tree $t$ is denoted by $\adm(t)$. Note that the empty cut of $t$ is admissible.

Given a planar rooted tree $t$ and an admissible cut $c$ of it, we call \emph{components} of $t$ respect to $c$, the subtrees of $t$ obtained by removing the edges of $c$ from $t$. We denote by $R^c(t)$ the component containing the root of $t$. On the other hand, by adapting the classic case, we denote by $P^c(t)$ the planar tree obtained by removing the non-root nodes of $R^c(t)$ from $t$ and then contracting its edges. For instance,\figtwe
It is easy to see that via the identification $H_m\mapsto t_m$, with $m\in\N\cup\dot{\N}_0$, the coproduct of $\sNSym$ can be described by means the following formula\begin{equation}\label{049}\Delta(t_m)=\sum_{c\in\adm(t)}P^c(t)\otimes R^c(t).\end{equation}

The notions described above can be extended to the forests $t_{\alpha}=t_{\alpha_1}\cdots t_{\alpha_k}$, with $\alpha$ a dotted composition of length $k$. Indeed, an \emph{admissible cut} $c$ of $t_{\alpha}$ is a tuple $c=(c_1,\ldots,c_k)$, where each $c_i$ is an admissible cut of $t_{\alpha_i}$, possibly empty. Thus, $P^c(t_{\alpha})$ and $R^c(t_{\alpha})$ are given by the following forests:\[P^c(t_{\alpha})=P^{c_1}(t_{\alpha_1})\cdots P^{c_k}(t_{\alpha_k}),\qquad R^c(t_{\alpha})=R^{c_1}(t_{\alpha_1})\cdots R^{c_k}(t_{\alpha_k}).\]

We will see in Proposition \ref{048} that the addends of $\Delta(t_{\alpha})$ can be described by $P^c(t_{\alpha})\otimes R^c(t_{\alpha})$ up a sign induced by the fermionic components of $\alpha$. 

Let $\alpha$ be a dotted composition with $j=\df(\alpha)$, and let $c$ be a cut of $t_{\alpha}$. If $j\geq1$, we denote by $\sigma_c$ to be the unique permutation in $\Sym_j$ given by the reordering of the coloured nodes of $t_{\alpha}$ in $P^c(t_{\alpha})\otimes R^c(t_{\alpha})$. We define the sign of $c$ as follows:\[\sgn(c)=\left\{\begin{array}{ll}1&\text{if }j=0,\\\sgn(\sigma_c)&\text{otherwise}.\end{array}\right.\]For instance, if $\alpha=(\dot{2},\dot{0},2,\dot{3},\dot{1})$, we have:\figthi where $\sigma_c=(2,3,4,1)$ and $\sgn(c)=\sgn(\sigma_c)=(-1)^3=-1$.

\begin{pro}\label{048}
For a dotted composition $\alpha$, we have\[\Delta(t_{\alpha})=\sum_{c\in\adm(t_{\alpha})}\sgn(c)P^c(t_{\alpha})\otimes R^c(t_{\alpha}).\]
\end{pro}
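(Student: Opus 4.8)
The plan is to exploit that $\Delta$ is, by construction, an algebra morphism for the twisted product \ref{041} on $\sNSym\otimes\sNSym$, together with the single-tree formula \ref{049}. Writing $t_\alpha=t_{\alpha_1}\cdots t_{\alpha_k}$ and abbreviating $P^{c_i}:=P^{c_i}(t_{\alpha_i})$, $R^{c_i}:=R^{c_i}(t_{\alpha_i})$, I would first use multiplicativity of $\Delta$ to record $\Delta(t_\alpha)=\Delta(t_{\alpha_1})\cdots\Delta(t_{\alpha_k})$, the product being taken in $\sNSym\otimes\sNSym$ with the rule \ref{041}. Substituting \ref{049} for each factor gives
\[\Delta(t_\alpha)=\sum_{(c_1,\dots,c_k)}\bigl(P^{c_1}\otimes R^{c_1}\bigr)\cdots\bigl(P^{c_k}\otimes R^{c_k}\bigr),\]
the sum running over all tuples $c=(c_1,\dots,c_k)$ with $c_i\in\adm(t_{\alpha_i})$, which is precisely $\adm(t_\alpha)$ by the definition of an admissible cut of a forest. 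The remaining content is to evaluate this product in the twisted tensor algebra.

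Carrying out the multiplication from left to right, the $i$-th application of \ref{041} pits the accumulated right-hand factor $R^{c_1}\cdots R^{c_{i-1}}$ against the incoming $P^{c_i}$, contributing the sign $(-1)^{(\sum_{l<i}\df(R^{c_l}))\df(P^{c_i})}$, since $\df$ is additive on products. Summing the exponents over $i$, the total accumulated sign is $(-1)^{\sum_{i<j}\df(R^{c_i})\,\df(P^{c_j})}$. Using $P^c(t_\alpha)=P^{c_1}\cdots P^{c_k}$ and $R^c(t_\alpha)=R^{c_1}\cdots R^{c_k}$, the proposition reduces to the sign identity
\[(-1)^{\sum_{i<j}\df(R^{c_i})\,\df(P^{c_j})}=\sgn(c).\]

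The main obstacle, and the heart of the argument, is this identity, which I would prove combinatorially. Each $t_{\alpha_i}$ carries a coloured node exactly when $\alpha_i\in\dot{\N}_0$, and after the cut that node lies either in the pruned part $P^{c_i}$ or in the root part $R^{c_i}$; hence $\df(R^{c_i}),\df(P^{c_j})\in\{0,1\}$, and the exponent counts precisely the pairs $i<j$ for which the coloured node of $t_{\alpha_i}$ stays with the root while that of $t_{\alpha_j}$ is pruned. On the other hand, $\sigma_c$ is the permutation of the $j=\df(\alpha)$ coloured nodes obtained by listing them as they occur in $P^c(t_\alpha)\otimes R^c(t_\alpha)$: first all pruned coloured nodes in left-to-right forest order, then all root coloured nodes in left-to-right forest order. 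This is a stable sort by the key ``pruned $<$ root'', so its only inversions are the pairs in which a root node precedes (by forest index) a pruned node, which is exactly the set of pairs counted by the exponent. Therefore $\sum_{i<j}\df(R^{c_i})\df(P^{c_j})$ equals the number of inversions of $\sigma_c$, whence its parity is $\sgn(\sigma_c)=\sgn(c)$, completing the proof. I would sanity-check the bookkeeping on the running example $\alpha=(\dot2,\dot0,2,\dot3,\dot1)$, where the root retains only the first coloured node and the three pruned nodes precede it in $P^c\otimes R^c$, giving three inversions and $\sgn(c)=-1$, in agreement with $\sigma_c=(2,3,4,1)$.
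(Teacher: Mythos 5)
Your proposal is correct, and it reaches the result by a genuinely more direct route than the paper. The paper proves the proposition by induction on $\ell(\alpha)$: it peels off the last factor, writes $\Delta(t_\alpha)=\Delta(t_{\alpha'})\Delta(t_{\alpha_k})$, and reduces everything to the incremental sign identity $\sgn(c)=\sgn(c')(-1)^{ab}$ with $a=\df(R^{c'}(t_{\alpha'}))$ and $b=\df(P^{c_k}(t_{\alpha_k}))$, which it then verifies by a three-way case analysis ($\alpha_k\in\N$; $\alpha_k\in\dot{\N}_0$ with $b=0$; $\alpha_k\in\dot{\N}_0$ with $b=1$), tracking how the inversion count of $\sigma_c$ changes when one component is appended. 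You instead expand the full $k$-fold twisted product in one pass, obtain the closed-form sign $(-1)^{\sum_{i<j}\df(R^{c_i})\df(P^{c_j})}$, and identify the exponent with the inversion number of $\sigma_c$ via the observation that $\sigma_c$ is a stable sort by the key ``pruned before root,'' so its inversions are exactly the pairs $i<j$ with the coloured node of $t_{\alpha_i}$ retained by the root and that of $t_{\alpha_j}$ pruned. That identification is sound (each $\df(R^{c_i})$, $\df(P^{c_j})$ lies in $\{0,1\}$ and at most one is nonzero per component, so the sum is precisely the inversion count), and your bookkeeping of the iterated product under the rule \ref{041} is right. What your version buys is a global, case-free sign formula that makes the role of the stable sort transparent; what the paper's induction buys is that it never has to formalize the global inversion count, only its one-step increment. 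Both arguments rest on the same underlying combinatorial fact, and yours is, if anything, the cleaner presentation.
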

\begin{proof}
Let $j=\df(\alpha)$. We proceed by induction on $k:=\ell(\alpha)$. If $k=1$, the result follows from \ref{049}. If $k>1$, assume the claim is true for smaller values. We will show that the addends on both sides of the equation coincide. Notice that $\Delta(t_{\alpha})=\Delta(t_{\alpha'})\Delta(t_{\alpha_k})$, where $\alpha'=(\alpha_1,\ldots,\alpha_{k-1})$. Now, by inductive hypothesis, the addends of $\Delta(t_{\alpha'})$ can be written as $\sgn(c')P^{c'}(t_{\alpha'})\otimes R^{c'}(t_{\alpha'})$, where $c'=(c_1,\ldots,c_{k-1})$ is an admissible cut of $t_{\alpha'}$. Respectively, the addends of $\Delta(t_{\alpha_k})$ can be written as $P^{c_k}(t_{\alpha_k})\otimes R ^{c_k}(t_{\alpha_k})$, where $c_k$ is an admissible cut of $t_{\alpha_k}$. Hence, an addend of $\Delta(t_{\alpha})$ has the following form:\[[\sgn(c')P^{c'}(t_{\alpha'})\otimes R^{c'}(t_{\alpha'})]\cdot[P^{c_k}(t_{\alpha_k})\otimes R ^{c_k}(t_{\alpha_k})]=\sgn(c')(-1)^{ab}P^{c'}(t_{\alpha'})P^{c_k}(t_{\alpha_k})\otimes R^{c'}(t_{\alpha'})R^{c_k}(t_{\alpha_k}),\]where $a=\df(R^{c'}(t_{\alpha'}))$ and $b=\df(P^{c_k}(t_{\alpha_k}))$. Observe that $c=(c_1,\ldots,c_{k-1},c_k)$ is an admissible cut of $t_{\alpha}$, satisfying\begin{equation}\label{050}P^c(t_{\alpha})\otimes R^c(t_{\alpha})=P^{c'}(t_{\alpha'})P^{c_k}(t_{\alpha_k})\otimes R^{c'}(t_{\alpha'})R^{c_k}(t_{\alpha_k}).\end{equation}Now, we will show that $\sgn(c)=\sgn(c')(-1)^{ab}$. If $j=0$ it is obvious. For $j\geq1$, we distinguish three cases. 

If $\alpha_k\in\N$, then $\sigma_c=\sigma_{c'}$ and $b=0$.

If $\alpha_k\in\dot{\N}_0$ and $b=0$, then $\sgn(c')(-1)^{ab}=\sgn(\sigma_{c'})$. Since the coloured node of $\Delta(t_{\alpha_k})$ belongs to $R^{c_k}(t_{\alpha_k})$, then $\sigma_c=(\sigma_{c'}(1),\ldots,\sigma_{c'}(j-1),j)$. Thus, the number of inversions of $\sigma_{c'}$ and $\sigma_c$ coincide, and so $\sgn(\sigma_{c'})=\sgn(c)$.

If $\alpha_k\in\dot{\N}_0$ and $b=1$, then $\sgn(c')(-1)^{ab}=\sgn(\sigma_{c'})(-1)^a$. Since $\sigma_{c'}$ is determined by the order of coloured nodes of $P^{c'}(t_{\alpha'})\otimes R^{c'}(t_{\alpha'})$, there is $i\in[j-1]$ such that the nodes positioned in $\sigma_{c'}(1),\ldots,\sigma_{c'}(i)$ belong to $P^{c'}(t_{\alpha'})$ and the nodes positioned in $\sigma_{c'}(i+1),\ldots,\sigma_{c'}(j-1)$ belong to $R^{c'}(t_{\alpha'})$. This together with \ref{050} imply that $a=j-1-i$ and $\sigma_c=(\sigma_{c'}(1),\ldots,\sigma_{c'}(i),j,\sigma_{c'}(i+1),\ldots,\sigma_{c'}(j-1))$. Thus, the number of inversions of $\sigma_c$ is the one of $\sigma_{c'}$ plus $j-1-i$. So, $\sgn(\sigma_c)=\sgn(\sigma_{c'})(-1)^{j-1-i}=\sgn(\sigma_{c'})(-1)^a$.

Similarly, we prove that for each admissible cut $c$ of $t_{\alpha}$, $\sgn(c)P^c(t_{\alpha})\otimes R^c(t_{\alpha})$ can be represented as an addend of $\Delta(t_{\alpha})$. This concludes the proof.
\end{proof}

\subsection{Fundamental quasisymmetric functions in superspace}\label{028}

The dual structure of $\sNSym$ is the Hopf algebra of quasisymmetric functions in superspace $\sQSym$. This relation is determined by a pairing $\langle\cdot,\cdot\rangle:\sQSym\otimes\sNSym\to\Q$, which satisfies $\langle M_{\alpha},H_{\beta}\rangle=\delta_{\alpha\beta}$.

Now, we introduce the set of \emph{fundamental quasisymmetric functions in superspace} $\{L_{\alpha}\}$ as the basis of $\sQSym$ obtained by dualizing the noncommutative Ribbon Schur functions in superspace $\{R_{\beta}\}$, defined in Section \ref{027}, that is $\langle L_{\alpha},R_{\beta}\rangle=\delta_{\alpha\beta}$. In the following theorem, we show that this basis coincides with the set of fundamental quasisymmetric functions introduced in \cite{FiLaPi19}, with respect to the partial order $\preceq$ defined in Section \ref{021}. Moreover, we write $M_{\alpha}$ in terms of $L_{\beta}$ and provide a formula for the coproduct of fundamental quasisymmetric functions in superspace.

\begin{thm}
Let $\alpha$ be a dotted composition. Then\[L_{\alpha}=\sum_{\beta\preceq\alpha}M_{\beta},\qquad M_{\alpha}=\sum_{\beta\preceq\alpha}(-1)^{\ell(\alpha)-\ell(\beta)}L_{\beta},\qquad\Delta(L_{\alpha})=\sum_{\beta\gamma=\alpha\text{ or }\beta\odot\gamma=\beta}L_{\beta}\otimes L_{\gamma}.\]
\end{thm}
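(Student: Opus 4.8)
The plan is to derive all three identities from the duality between $\sNSym$ and $\sQSym$, using only the defining pairings $\langle M_\alpha,H_\beta\rangle=\delta_{\alpha\beta}$ and $\langle L_\alpha,R_\beta\rangle=\delta_{\alpha\beta}$ together with two transition formulas already available: the recursive definition of $R_\alpha$, which rearranges to $H_\nu=\sum_{\nu\preceq\delta}R_\delta$, and Proposition \ref{047}, which gives $R_\mu=\sum_{\mu\preceq\nu}(-1)^{\ell(\mu)-\ell(\nu)}H_\nu$. The guiding principle is that a change of basis in $\sNSym$ dualizes to the transposed change of basis in $\sQSym$, so that each of the first two formulas reduces to a one-line pairing computation, and the coproduct formula reduces to evaluating a single product in $\sNSym$.

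First I would prove $L_\alpha=\sum_{\beta\preceq\alpha}M_\beta$. Writing $L_\alpha=\sum_\beta d_{\alpha\beta}M_\beta$ and pairing with $H_\nu$ gives $d_{\alpha\nu}=\langle L_\alpha,H_\nu\rangle$. Substituting $H_\nu=\sum_{\nu\preceq\delta}R_\delta$ and using $\langle L_\alpha,R_\delta\rangle=\delta_{\alpha\delta}$ yields $\langle L_\alpha,H_\nu\rangle=\sum_{\nu\preceq\delta}\delta_{\alpha\delta}=[\nu\preceq\alpha]$, hence $d_{\alpha\nu}=[\nu\preceq\alpha]$, which is the claim. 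The inverse expansion is obtained symmetrically: writing $M_\alpha=\sum_\beta e_{\alpha\beta}L_\beta$ and pairing with $R_\mu$ gives $e_{\alpha\mu}=\langle M_\alpha,R_\mu\rangle$, and substituting Proposition \ref{047} together with $\langle M_\alpha,H_\nu\rangle=\delta_{\alpha\nu}$ gives $e_{\alpha\mu}=(-1)^{\ell(\mu)-\ell(\alpha)}[\mu\preceq\alpha]$, i.e. $M_\alpha=\sum_{\mu\preceq\alpha}(-1)^{\ell(\alpha)-\ell(\mu)}L_\mu$. One may alternatively deduce the second expansion from the first by M\"obius inversion, the interval $[\mu,\alpha]$ in the covering order being Boolean, but the direct pairing argument avoids computing the M\"obius function.

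For the coproduct, I would use that $\Delta$ on $\sQSym$ is dual to the product of $\sNSym$, so that the coefficient of $L_\mu\otimes L_\nu$ in $\Delta(L_\alpha)$ equals $\langle L_\alpha,R_\mu R_\nu\rangle$ under the pairing of tensor factors fixed in the preliminaries. Theorem \ref{018} evaluates $R_\mu R_\nu$ explicitly: it equals $R_{\mu\nu}$ when $\rg(\mu),\nu_1\in\dot{\N}_0$, and $R_{\mu\nu}+R_{\mu\odot\nu}$ otherwise. Since $\{R_\gamma\}$ is a basis and $\langle L_\alpha,R_\gamma\rangle=\delta_{\alpha\gamma}$, the pairing $\langle L_\alpha,R_\mu R_\nu\rangle$ is simply $[\mu\nu=\alpha]+[\mu\odot\nu=\alpha]$, where the second indicator is vacuous precisely when $\mu\odot\nu$ is undefined (both $\rg(\mu),\nu_1$ dotted). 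Reading off these coefficients gives exactly $\Delta(L_\alpha)=\sum_{\mu\nu=\alpha\ \text{or}\ \mu\odot\nu=\alpha}L_\mu\otimes L_\nu$; the two conditions never hold simultaneously for a fixed pair, since $\ell(\mu\nu)=\ell(\mu)+\ell(\nu)$ while $\ell(\mu\odot\nu)=\ell(\mu)+\ell(\nu)-1$, so every coefficient lies in $\{0,1\}$.

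The main obstacle is the sign bookkeeping in this last step. In the super setting the identification $(\sNSym\otimes\sNSym)^*\simeq\sQSym\otimes\sQSym$ could in principle introduce a Koszul sign $(-1)^{\df(\mu)\df(\nu)}$ when pairing $L_\mu\otimes L_\nu$ against $R_\mu\otimes R_\nu$, which would corrupt the clean coefficients. I would therefore verify that the pairing convention adopted in the preliminaries is the sign-free one $\langle f\otimes g,x\otimes y\rangle=\langle f,x\rangle\langle g,y\rangle$, as this is exactly what forces the surviving coefficients to be $+1$ rather than $\pm1$. A single example already pins this down: in $\Delta(L_{(\dot{0},\dot{0})})$ the term $L_{(\dot{0})}\otimes L_{(\dot{0})}$ must appear with coefficient $+1$ (here $R_{(\dot{0})}R_{(\dot{0})}=R_{(\dot{0},\dot{0})}$ by Theorem \ref{018}, since both inner components are dotted), which rules out the signed convention where its coefficient would be $(-1)^{1\cdot1}=-1$. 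Once the convention is fixed, the remainder is a direct application of the product formula of Theorem \ref{018}.
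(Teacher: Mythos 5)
Your proposal is correct and follows essentially the same route as the paper: both expand $L_\alpha$ (resp. $M_\alpha$) in the monomial (resp. fundamental) basis, extract coefficients by pairing against $H_\beta$ (resp. $R_\beta$) via the transition formulas $H_\beta=\sum_{\beta\preceq\gamma}R_\gamma$ and Proposition \ref{047}, and obtain the coproduct by dualizing the product formula of Theorem \ref{018}. The only difference is one of care: the paper disposes of the coproduct identity in a single sentence, whereas you spell out the coefficient computation and explicitly pin down the sign convention on the pairing of tensor factors, which is a genuine subtlety in the super setting that the paper leaves implicit.
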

\begin{proof}
As $\{M_{\gamma}\}$ forms a basis of $\sQSym$, we can write $L_{\alpha}$ as a linear combination ${\displaystyle L_{\alpha}=\sum_{\gamma}c_{\gamma}M_{\gamma}}$. 

For a dotted composition $\beta$, we have ${\displaystyle\langle L_{\alpha},H_{\beta}\rangle=\sum_{\gamma}c_{\gamma}\langle M_{\gamma},H_{\beta}\rangle.}$

Since $\langle M_{\gamma},H_{\beta}\rangle=0$ if $\gamma\neq\beta$ and $\langle M_{\gamma},H_{\beta}\rangle=0$ if $\gamma=\beta$, then $c_{\beta}=\langle L_{\alpha},H_{\beta}\rangle$. This implies that\[L_{\alpha}=\sum_{\beta}\langle L_{\alpha},H_{\beta}\rangle M_{\beta}.\]Now, as $H_{\beta}={\displaystyle\sum_{\beta\preceq\gamma}R_{\gamma}}$, then ${\displaystyle\langle L_{\alpha},H_{\beta}\rangle=\sum_{\beta\preceq\gamma}\langle L_{\alpha},R_{\gamma}\rangle=\left\{\begin{array}{ll}0&\text{if }\alpha\prec\beta,\\1&\text{if }\alpha\succeq\beta.
\end{array}\right.}$ Thus, ${\displaystyle L_{\alpha}=\sum_{\beta\preceq\alpha}M_{\beta}}$.

Similarly, as $\{L_{\alpha}\}$ is dual to the basis $\{R_{\alpha}\}$, then it is also a basis of $\sQSym$. Thus,\[M_{\alpha}=\sum_{\beta}\langle M_{\alpha},R_{\beta}\rangle L_{\beta}=\sum_{\beta}\langle M_{\alpha},\sum_{\beta\preceq\gamma}(-1)^{\ell(\beta)-\ell(\gamma)}H_{\gamma}\rangle L_{\beta}=
\sum_{\beta}\left[\sum_{\beta\preceq\gamma}\langle M_{\alpha},(-1)^{\ell(\beta)-\ell(\gamma)}H_{\gamma}\rangle\right]L_{\beta}.\]

Now, ${\displaystyle\sum_{\beta\preceq\gamma}\langle M_{\alpha},(-1)^{\ell(\beta)-\ell(\gamma)}H_{\gamma}\rangle}$ is $0$ if $\alpha\prec\beta$ and it is $(-1)^{\ell(\beta)-\ell(\alpha)}$ otherwise. Hence,\[M_{\alpha}=\sum_{\beta\preceq\alpha}(-1)^{\ell(\beta)-\ell(\alpha)}L_{\beta}.\]The last assertion follows from Theorem \ref{018} and the duality.
\end{proof}

For a dotted composition $\alpha$, the coproduct of $L_{\alpha}$ can be obtained by considering all possible horizontally ($\alpha=\beta\gamma$) and vertically ($\alpha=\beta\odot\gamma$) splitting of the Ribbon diagram of $\alpha$. For instance, for $\alpha=(1,\dot{3},2)$, we have\figftn Note that, to obtain all possible vertically splitting of $\alpha$ we need to consider both the left and the right diagram of it. This description extends the one for classic Ribbon diagrams, see \cite[Proposition 5.2.15]{GrRe14}. Thus,\figfif

We also can describe the coproduct of $L_{\alpha}$, with $\alpha$ a length $k$ dotted composition, by identifying $L_{\alpha}$ with the forest $t_{\alpha}$. Thus,\[\Delta(t_{\alpha})=1\otimes t_{\alpha}+\sum_{i=1}^k\sum_{c\neq\emptyset}t_{(\alpha_1,\ldots,\alpha_{i-1})}P^c(t_{\alpha_i})\otimes R^c(t_{\alpha_i})t_{(\alpha_{i+1},\ldots,\alpha_k)}.\]For instance, for $\alpha=(1,\dot{3},2)$, we have\figsvt

\subsection{Symmetric functions in superspace}\label{031}

In this subsection, we present some results on symmetric functions in superspace, which are obtained by the projection $\pi$ from $\sNSym$ to $\sSym$ defined in \cite{FiLaPi19}. Additionally, we obtain a new basis of $\sSym$ formed by a new class of functions that we will call \emph{Ribbon Schur functions in superspace}, which extends the classic Ribbon Schur functions in $\SSym$.

Recall that $\pi:\sNSym\to\sSym$ is determined by $\pi(H_n):=h_n$ and $\pi(\tH_n):=\th_n$. In particular, this morphism extends the classic morphism from $\NSym$ to $\SSym$.

The following proposition characterizes the action of $\pi$ on the families of noncommutative functions in superspace.

\begin{pro}\label{039}
For every $n\geq0$, we have $\pi(\tS_n)=\te_n$ and $\pi(\tP_n)=\pi(\Psi_n)=\tp_n$.
\end{pro}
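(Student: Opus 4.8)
The plan is to exploit that $\pi$ is a morphism of superalgebras: it is multiplicative, it preserves the fermionic degree (so it respects the sign rule used when multiplying generating series), and, extended $\field[[t]][\tau]$-linearly, it commutes with the operator $D:=t\partial_t+\tau\partial_\tau$ since $D$ acts only on the formal parameters. Writing $\lambda_{\sSym}(t,\tau)=\sum_{n\geq0}t^n(h_n+\tau\th_n)$, we have $\pi(\lambda(t,\tau))=\lambda_{\sSym}(t,\tau)$; its $t^0$-coefficient is $1+\tau\th_0$, invertible because $\tau^2=0$, so $\lambda_{\sSym}(\pm t,\pm\tau)$ is an invertible power series. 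The whole argument is then a matter of pushing the generating-function identities that define $S_n,\tS_n$ and $P_n,\tP_n$ through $\pi$ and matching them against the corresponding identities in $\sSym$.

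First I would treat the elementary functions. Applying $\pi$ to $\sigma(t,\tau)\lambda(-t,-\tau)=1$ and using multiplicativity gives $\pi(\sigma(t,\tau))\,\lambda_{\sSym}(-t,-\tau)=1$, so $\pi(\sigma(t,\tau))=\lambda_{\sSym}(-t,-\tau)^{-1}$. Since the elementary functions $e_n,\te_n$ of $\sSym$ satisfy the analogous relation $\sigma_{\sSym}(t,\tau)\lambda_{\sSym}(-t,-\tau)=1$ with $\sigma_{\sSym}(t,\tau)=\sum_{n\geq0}t^n(e_n+\tau\te_n)$ \cite{DeLaMa06}, uniqueness of the inverse forces $\pi(\sigma(t,\tau))=\sigma_{\sSym}(t,\tau)$; comparing coefficients of $t^n$ and of $\tau t^n$ yields $\pi(S_n)=e_n$ and $\pi(\tS_n)=\te_n$.

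For the power sums I would argue identically with $\lambda(t,\tau)\Pi(t,\tau)=D\lambda(t,\tau)$. Applying $\pi$ and inverting $\lambda_{\sSym}$ gives $\pi(\Pi(t,\tau))=\lambda_{\sSym}(t,\tau)^{-1}D\lambda_{\sSym}(t,\tau)$, which is exactly the series $\Pi_{\sSym}(t,\tau)=\sum_{n\geq0}t^n(p_n+\tau(n+1)\tp_n)$ determined by the corresponding relation in $\sSym$ \cite{DeLaMa06}. Comparing coefficients of $\tau t^n$ (and of $t^n$, with $P_0=p_0=0$ for the bosonic part) gives $\pi(P_n)=p_n$ and, since $n+1\neq 0$ in $\field$, $\pi(\tP_n)=\tp_n$.

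Finally, for $\Psi_n$ I would recompute nothing and instead apply $\pi$ to the bracket formula of Proposition \ref{005}, $(n+1)\tP_n=(n+1)\Psi_n+\sum_{k=0}^{n-1}[P_{n-k},\Psi_k]$ for $n\geq1$ (with $\tP_0=\Psi_0$). As $\pi$ is an algebra morphism it sends each $[P_{n-k},\Psi_k]$ to $[p_{n-k},\pi(\Psi_k)]$, and since $p_{n-k}$ has fermionic degree $0$ it is central in $\sSym$ (which is commutative up to the sign $(-1)^{d_ad_b}$), so every such bracket vanishes regardless of the value of $\pi(\Psi_k)$. Combined with $\pi(\tP_n)=\tp_n$ this yields $(n+1)\tp_n=(n+1)\pi(\Psi_n)$, hence $\pi(\Psi_n)=\tp_n$, the case $n=0$ following from $\Psi_0=\tP_0$. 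The hard part is not any calculation but the bookkeeping at the interface: one must check that $\pi$ genuinely commutes with the formal operations on generating functions (the $\tau$-sign rule and $D$) and, above all, invoke from \cite{DeLaMa06,FiLaPi19} that $e_n,\te_n$ and $p_n,\tp_n$ in $\sSym$ obey precisely the generating-function identities used here to define $S_n,\tS_n$ and $P_n,\tP_n$ in $\sNSym$; once that dictionary is in place the coefficient comparisons are immediate.
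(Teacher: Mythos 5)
Your proposal is correct, and for the power sums and for $\Psi_n$ it is essentially the paper's own argument in generating-function clothing: the paper proves $\pi(\tP_n)=\tp_n$ by induction on the recursion of Proposition \ref{006}, matched coefficient-by-coefficient against the identical recursion for $\tp_n$ in $\sSym$ (\cite[Lemma 26]{DeLaMa06}), and then obtains $\pi(\Psi_n)=\pi(\tP_n)$ exactly as you do, by applying $\pi$ to Proposition \ref{005} and killing the brackets $[P_{n-k},\Psi_k]$ because $p_{n-k}$ has null fermionic degree and is therefore central in $\sSym$. The one place you genuinely diverge is the claim $\pi(\tS_n)=\te_n$: the paper never touches the generating function $\sigma(t,\tau)$ there, but instead uses naturality of the antipode, $\pi\circ S=S\circ\pi$, combined with $S(\tH_n)=(-1)^{n+1}\tS_n$ (Proposition \ref{009}) and the known formula $S(\th_n)=(-1)^{n+1}\te_n$ in $\sSym$ from \cite{FiLaPi19}. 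Your route pushes $\sigma(t,\tau)\lambda(-t,-\tau)=1$ through $\pi$ and invokes uniqueness of inverses; this is sound (the constant term $1+\tau\th_0$ is a unit since $\tau^2=0$, so the inverse of $\lambda_{\sSym}(-t,-\tau)$ is unique) and has the minor bonus of delivering $\pi(S_n)=e_n$ and $\pi(P_n)=p_n$ in the same breath, but it trades one piece of external input for another: instead of the antipode formula of \cite{FiLaPi19} you must import from \cite{DeLaMa06} that $e_n,\te_n$ satisfy $\sigma_{\sSym}(t,\tau)\lambda_{\sSym}(-t,-\tau)=1$ with the same conventions. As you yourself flag, that dictionary check is the only real content of the step; both inputs are available in the cited sources, so either version closes the proof.
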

\begin{proof}
First, as $\pi$ is a Hopf algebra morphism, we have $\pi\circ S=S\circ\pi$, where $S$ denotes the antipodes of $\sNSym$ and $\sSym$ respectively. Thus, we obtain $\pi(S(\tH_n))=\pi((-1)^{n+1}\tS_n)=(-1)^{n+1}\pi(\tS_n)$, and $S(\pi(\tH_n))=S(\th_n)=(-1)^{n+1}\te_n$ \cite[Corollary 4.5]{FiLaPi19}. So, $(-1)^{n+1}\pi(\tS_n)=(-1)^{n+1}\te_n$. Hence, $\pi(\tS_n)=\te_n$. 

For the second part, we proceed by induction on $n$. Since $\tp_0=\th_0$ and $\tP_0=\tH_0$, then $\pi(\tP_0)=\tp_0$. Now, let $n\geq1$, and assume the result is true for $0\leq k<n$. By \cite[Lemma 26]{DeLaMa06}, we have\begin{equation}\label{036}
(n+1)\tp_n=(n+1)\th_n-p_n\th_0-\sum_{k=0}^{n-1}\left(p_k\th_{n-k}+(k+1)\tp_kh_{n-k}\right).\end{equation}
By applying $\pi$ on both sides of \ref{037}, inductive hypothesis implies that $\pi(\tP_n)=\tp_n$. Similarly, by applying $\pi$ on \ref{038}, we obtain $\pi(\Psi_n)=\pi(\tP_n)$, because the Lie brackets become zero due to the commutativity of the product in $\sSym$ whenever one of the elements has null fermionic degree.
\end{proof}

\begin{pro}
For $n\geq1$, we have ${\displaystyle\tp_n=\th_n-\sum_{k=0}^{n-1}h_{n-k}\tp_k}$ and ${\displaystyle\te_n=\sum_{k=0}^n(-1)^{n-k}\tp_{n-k}e_k}$.
\end{pro}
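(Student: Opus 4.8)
The plan is to derive both identities by applying the projection $\pi:\sNSym\to\sSym$ to relations that already hold in $\sNSym$, and then to invoke Proposition \ref{039}, which records the images of the relevant functions. Recall that $\pi$ is a Hopf algebra morphism, hence in particular an algebra morphism, with $\pi(H_m)=h_m$ and $\pi(\tH_n)=\th_n$, and that Proposition \ref{039} gives $\pi(\Psi_m)=\pi(\tP_m)=\tp_m$ for all $m\geq0$ together with $\pi(\tS_n)=\te_n$. I will also use that $\pi$ restricts to the classical morphism $\NSym\to\SSym$, so that $\pi(S_k)=e_k$; this is immediate from \ref{035}, since applying $\pi$ to $S_k=(-1)^kS(H_k)$ and using $\pi\circ S=S\circ\pi$ together with the classical antipode identity $S(h_k)=(-1)^ke_k$ in $\sSym$ yields $\pi(S_k)=(-1)^k(-1)^ke_k=e_k$.

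For the first identity, I would apply $\pi$ to the defining recursion \ref{019}, namely $\Psi_n=\tH_n-\sum_{k=0}^{n-1}H_{n-k}\Psi_k$. Since $\pi$ is an algebra morphism, each product $H_{n-k}\Psi_k$ maps to $\pi(H_{n-k})\pi(\Psi_k)=h_{n-k}\tp_k$, and no sign correction appears because $h_{n-k}$ has null fermionic degree and is therefore central in $\sSym$. Substituting $\pi(\Psi_n)=\tp_n$ and $\pi(\tH_n)=\th_n$ then gives $\tp_n=\th_n-\sum_{k=0}^{n-1}h_{n-k}\tp_k$ directly.

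For the second identity, I would apply $\pi$ to Proposition \ref{011}, which states $\tS_n=\sum_{k=0}^n(-1)^{n-k}\Psi_{n-k}S_k$. Using again that $\pi$ is an algebra morphism together with the identifications above, each summand maps as $(-1)^{n-k}\Psi_{n-k}S_k\mapsto(-1)^{n-k}\tp_{n-k}e_k$, once more without any sign correction since $e_k$ is central in $\sSym$, so the order $\tp_{n-k}e_k$ is exactly the image of $\Psi_{n-k}S_k$. Combined with $\pi(\tS_n)=\te_n$, this yields $\te_n=\sum_{k=0}^n(-1)^{n-k}\tp_{n-k}e_k$.

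The only points requiring genuine care are the bookkeeping facts that $\pi$ respects these products with no hidden sign (which holds because in each case the factor being merged against a fermionic element is bosonic, hence central in $\sSym$) and the identification $\pi(S_k)=e_k$. Once these are in place, both formulas are immediate consequences of Proposition \ref{039} and the two $\sNSym$-level relations \ref{019} and Proposition \ref{011}, so I do not expect any substantial obstacle beyond assembling these ingredients.
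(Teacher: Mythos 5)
Your proposal is correct and follows essentially the same route as the paper: the paper's proof is a one-line appeal to Proposition \ref{039}, Equation \ref{019} and Proposition \ref{009}, i.e.\ apply the projection $\pi$ to the corresponding noncommutative identities. The only cosmetic difference is that for the second formula you invoke Proposition \ref{011} (itself a consequence of Proposition \ref{009}), which is arguably the more direct noncommutative antecedent of $\te_n=\sum_{k=0}^n(-1)^{n-k}\tp_{n-k}e_k$, and your explicit checks of $\pi(S_k)=e_k$ and of the absence of sign corrections are sound.
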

\begin{proof}
It is a consequence of Proposition \ref{039}, \ref{019} and Proposition \ref{009}.
\end{proof}

Now, we introduce the \emph{Ribbon Schur functions in superspace}. In the classic case, Ribbon Schur functions can be regarded as a special case of \emph{Skew Schur functions} $s_{\lambda/\mu}$, which are indexed by the so-called \emph{skew partitions} $\lambda/\mu$, where $\lambda,\mu$ are partitions such that the Young diagram of $\mu$ is contained in the one of $\lambda$. The Young diagram of $\lambda/\mu$ is obtained by removing the boxes of the diagram of $\mu$ from the one of $\lambda$. This diagram is called a \emph{Ribbon diagram} if it is connected and contains no a $2\times2$ block of boxes. In this case, we can identity $\lambda/\mu$ with a composition $\alpha$. Thus, the \emph{Ribbon Schur function} $r_{\alpha}$ is defined as $s_{\lambda/\mu}$. See \cite[Section 7.15]{St99} for details.

In superspace, Schur functions were defined by means a specialization of the parameters of the so-called \emph{Macdonald polynomials in superspace}. On the other hand, \emph{Skew Schur functions in superspace} were defined in relation with a generalization of the Littlewood--Richardson coefficients, see \cite{JoLa17} for details.

Here, for a dotted composition $\alpha$, we define the \emph{Ribbon Schur functions in superspace} $r_{\alpha}$ by projecting the noncommutative Ribbon Schur functions in superspace $R_{\alpha}$ on $\sSym$, that is, $r_{\alpha}=\pi(R_{\alpha})$.

In what follows of this subsection we will identify a superpartition with the unique dotted composition obtained by dotting its fermionic part.

Recall that if $\alpha$ is a usual composition, $\tilde{\alpha}$ denotes the partition obtained by sorting its component in nonincreasing order. Similarly, for a dotted composition $\alpha$, we will denote by $\tilde{\alpha}$ the tuple obtained by sorting in nonincreasing order, both the dotted components and the undotted components. Note that, since the fermionic part of a superpartition must be strictly decreasing, $\tilde{\alpha}$ is a superpartition only if the dotted components of $\alpha$ are all different. For instance, if $\alpha=(\dot{1},2,3,\dot{2},3,1,\dot{4})$, then $\tilde{\alpha}=(\dot{4},\dot{2},\dot{1},3,3,2,1)$, that is\figstn

For a dotted composition $\alpha$ with $k=\ell(\alpha)$, we define $h_{\alpha}=h_{\alpha_1}\cdots h_{\alpha_k}$. Note that, as $h_m^2=0$ for all $m\in\dot{\N}_0$, then $h_{\alpha}=0$ whenever $\alpha$ has repeated dotted components. Further, if $\tilde{\alpha}$ is a superpartition, we have $h_{\alpha}=(-1)^{\sigma(\alpha)}h_{\tilde{\alpha}}$, where $\sigma(\alpha)$ is the number of inversions of the permutation of the dotted components of $\alpha$, obtained when computing $\tilde{\alpha}$. For instance, $h_{(\dot{1},2,3,\dot{2},3,1,\dot{4})}=-h_{(\dot{4},\dot{2},\dot{1},3,3,2,1)}$.

Below, we obtain an expansion of a Ribbon Schur function in superspace in terms of complete homogeneous functions in superspace, which generalizes the well-known formula for classic Ribbon Schur functions \cite{MacMahon12}.

\begin{pro}\label{053}
For a dotted composition $\alpha$, we have\[r_{\alpha}=\sum_{\alpha\preceq\beta}(-1)^{\ell(\alpha)-\ell(\beta)+\sigma(\beta)}h_{\tilde{\beta}}.\]Conversely, for a superpartition $\Lambda$, we have ${\displaystyle h_{\Lambda}=\sum_{\Lambda\preceq\beta}r_{\beta}}$.
\end{pro}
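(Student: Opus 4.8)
The plan is to obtain both formulas by projecting, through the Hopf morphism $\pi:\sNSym\to\sSym$, the two relations between the $R_\alpha$ and the $H_\beta$ that are already at our disposal, and then to translate each generator product $h_\beta$ into its superpartition-indexed form $h_{\tilde\beta}$ by means of the sign rule recorded just before the statement.

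First I would establish the forward formula. Proposition \ref{047} gives $R_\alpha=\sum_{\alpha\preceq\beta}(-1)^{\ell(\alpha)-\ell(\beta)}H_\beta$. Since $\pi$ is an algebra morphism with $\pi(H_n)=h_n$ and $\pi(\tH_n)=\th_n$, multiplicativity yields $\pi(H_\beta)=h_\beta$ for every dotted composition $\beta$, so applying $\pi$ termwise gives $r_\alpha=\pi(R_\alpha)=\sum_{\alpha\preceq\beta}(-1)^{\ell(\alpha)-\ell(\beta)}h_\beta$. It then remains to rewrite each $h_\beta$ with a superpartition index, and here I would invoke the two facts set up before the statement: $h_\beta=0$ whenever $\beta$ has a repeated dotted component, and $h_\beta=(-1)^{\sigma(\beta)}h_{\tilde\beta}$ when $\tilde\beta$ is a superpartition. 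Substituting the latter, and noting that the vanishing terms on both sides agree since $h_{\tilde\beta}$ is likewise read as $0$ when $\tilde\beta$ is not a superpartition, produces exactly $r_\alpha=\sum_{\alpha\preceq\beta}(-1)^{\ell(\alpha)-\ell(\beta)+\sigma(\beta)}h_{\tilde\beta}$.

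For the converse I would start from the defining recursion $R_\alpha=H_\alpha-\sum_{\alpha\prec\beta}R_\beta$, which inverts to $H_\alpha=\sum_{\alpha\preceq\beta}R_\beta$, the very relation already used in the proof of the fundamental-function theorem. Applying $\pi$ gives $\pi(H_\alpha)=\sum_{\alpha\preceq\beta}r_\beta$. I would then specialize to $\alpha=\Lambda$, a superpartition viewed as a dotted composition with its dotted part distinct and decreasingly ordered; then $\tilde\Lambda=\Lambda$, whence $\sigma(\Lambda)=0$ and $\pi(H_\Lambda)=h_\Lambda$ exactly. This yields $h_\Lambda=\sum_{\Lambda\preceq\beta}r_\beta$.

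The only genuinely delicate point is the sign-and-vanishing bookkeeping in the step $h_\beta=(-1)^{\sigma(\beta)}h_{\tilde\beta}$, which encodes the anticommutativity of the fermionic generators $\th_m$; once this is available both identities follow at once from $\pi$ being a morphism. For consistency it is worth observing that covering never touches dotted components, so $\df(\beta)=\df(\alpha)$ and the multiset of dotted values of $\beta$ equals that of $\alpha$ for every $\beta\succeq\alpha$; in particular $r_\alpha=0$ whenever $\alpha$ has a repeated dotted component, and the two displayed identities are the expected mutually inverse triangular change of basis.
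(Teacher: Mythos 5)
Your main argument is correct and is essentially the paper's own proof: the paper likewise obtains the first identity by applying $\pi$ to Proposition \ref{047} and invoking $h_{\beta}=(-1)^{\sigma(\beta)}h_{\tilde{\beta}}$ together with the vanishing of $h_{\beta}$ for repeated dotted components, and the converse by applying $\pi$ to $H_{\Lambda}=\sum_{\Lambda\preceq\beta}R_{\beta}$; you have merely written out details the paper leaves implicit.

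One caveat on your closing ``consistency'' observation: it is not true that covering preserves the multiset of dotted values. Covering preserves the fermionic degree, but it $\oplus$-sums a dotted component with an adjacent undotted one (e.g.\ $(\dot{1},1,\dot{1})\preceq(\dot{1},\dot{2})$), so a $\beta\succeq\alpha$ can have pairwise distinct dotted components even when $\alpha$ does not. Hence your inference that $r_{\alpha}=0$ whenever $\alpha$ has a repeated dotted component is unjustified as stated; indeed the paper does not assert this, and in the final theorem of Section \ref{031} that case is treated by expanding $r_{\alpha}$ over the strictly covering $\beta$ rather than by declaring it zero. This slip lies outside the proof of the proposition itself, which stands as written.
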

\begin{proof}
This is a consequence of Proposition \ref{047}, the definition of $r_{\alpha}$ and the fact that $\pi(H_{\beta})=0$ whenever $\beta$ has repeated dotted components. 
\end{proof}

For instance, for $\alpha=(\dot{0},1,\dot{2},1)$, we have\[r_{\alpha}=-h_{(\dot{2},\dot{0},1,1)}+2h_{(\dot{3},\dot{0},1)}+h_{(\dot{2},\dot{1},1)}-h_{(\dot{3},\dot{1})}-h_{(\dot{4},\dot{0})}.\]

It follows from Theorem \ref{018} that, for dotted compositions $\alpha,\beta$, the product of $r_{\alpha}$ with $r_{\beta}$ is given as follows:\begin{equation}\label{051}r_{\alpha}r_{\beta}=\left\{\begin{array}{ll}
r_{\alpha\beta}&\text{if }\rg(\alpha),\beta_1\in\dot{\N}_0,\\
r_{\alpha\beta}+r_{\alpha\odot\beta}&\text{otherwise}.
\end{array}\right.\end{equation}

\begin{pro}\label{052}
For every $n\geq0$, we have ${\displaystyle r_{(1^n,\dot{0})}=\sum_{k=0}^n(-1)^{k}r_{(1^{n-k})}r_{(\dot{k})}}$.
\end{pro}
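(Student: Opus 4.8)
The plan is to prove the identity directly from the product rule \ref{051} for Ribbon Schur functions in superspace, without passing through the power sums; the right-hand side will telescope. First I would compute each summand $r_{(1^{n-k})}r_{(\dot{k})}$ separately. For $0\le k\le n-1$ the first factor ends in the undotted component $1$, so $\rg((1^{n-k}))=1\notin\dot{\N}_0$ while $(\dot{k})$ begins with a dotted component; hence we are in the ``otherwise'' branch of \ref{051} and obtain
\[r_{(1^{n-k})}r_{(\dot{k})}=r_{(1^{n-k},\dot{k})}+r_{(1^{n-k})\odot(\dot{k})}.\]
The operation $\odot$ is legitimate here because $\rg((1^{n-k}))$ and the first component of $(\dot{k})$ are not dotted at once, and it merges the trailing $1$ with $\dot{k}$ via $1\oplus\dot{k}=\dot{k+1}$, giving $(1^{n-k})\odot(\dot{k})=(1^{n-k-1},\dot{k+1})$. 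For the extreme index $k=n$ the first factor is the empty composition $(1^0)$, whose Ribbon function is the unit $r_{(1^0)}=1$, so that summand is simply $r_{(\dot{n})}$.

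Writing $A_j:=r_{(1^{n-j},\dot{j})}$ for $0\le j\le n$ (so $A_0=r_{(1^n,\dot{0})}$ is exactly the left-hand side and $A_n=r_{(\dot{n})}$), the computation above says $r_{(1^{n-k})}r_{(\dot{k})}=A_k+A_{k+1}$ for $k\le n-1$, and equals $A_n$ for $k=n$. Then I would substitute into the right-hand side and reindex:
\[\sum_{k=0}^{n}(-1)^k r_{(1^{n-k})}r_{(\dot{k})}=\sum_{k=0}^{n-1}(-1)^k(A_k+A_{k+1})+(-1)^nA_n=\sum_{k=0}^{n-1}(-1)^kA_k-\sum_{k=1}^{n}(-1)^kA_k+(-1)^nA_n.\]
In the difference of the two sums every term with $1\le k\le n-1$ cancels, leaving $A_0-(-1)^nA_n$, and the final $(-1)^nA_n$ restores $A_n$; the total is $A_0=r_{(1^n,\dot{0})}$, as claimed. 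A quick check of $n=0$ (where both sides equal $r_{(\dot{0})}$) and $n=1$ confirms the boundary behaviour.

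The only real obstacle is bookkeeping rather than mathematics: one must apply the correct branch of \ref{051} (verifying at each step that the rightmost component of the left factor is undotted), compute the single $\odot$-merge $1\oplus\dot{k}=\dot{k+1}$ correctly so that the ``diagonal'' term is indexed as $A_{k+1}$, and handle the empty-composition case $k=n$ by hand since $\odot$ is undefined there. Once these are in place the alternating sum collapses by standard telescoping. As a sanity check one may also note, via Proposition \ref{033} and Proposition \ref{039}, that the left-hand side equals $(-1)^n\tp_n$, but this is not needed for the argument.
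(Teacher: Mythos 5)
Your proof is correct, but it takes a different route from the paper. The paper argues by induction on $n$: assuming the identity for $n$, it multiplies both sides on the left by $r_{(1)}$, expands $r_{(1)}r_{(1^n,\dot{0})}=r_{(1^{n+1},\dot{0})}+r_{(2,1^{n-1},\dot{0})}$ and each $r_{(1)}r_{(1^{n-k})}r_{(\dot{k})}$ via \ref{051}, and cancels the spurious terms involving a leading $2$ to isolate $r_{(1^{n+1},\dot{0})}$. You instead expand each summand $r_{(1^{n-k})}r_{(\dot{k})}$ of the right-hand side directly: the computation $(1^{n-k})\odot(\dot{k})=(1^{n-k-1},\dot{k+1})$ is right (since $1\oplus\dot{k}=\dot{k+1}$), the application of the ``otherwise'' branch of \ref{051} is justified because $\rg((1^{n-k}))=1\in\N$ for $k\le n-1$, and the boundary case $k=n$ with $r_{(1^0)}=1$ is handled correctly, so the alternating sum of $A_k+A_{k+1}$ telescopes to $A_0$ exactly as you say. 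Your argument is non-inductive and somewhat cleaner: it avoids the bookkeeping with the compositions $(2,1^{n-1-k},\dot{k})$ that the paper's induction generates, at the cost of nothing beyond the same product rule. Both proofs use only \ref{051}, so there is no difference in generality; yours is simply the more direct verification, while the paper's fits the inductive template it uses elsewhere (e.g.\ in Proposition \ref{033}).
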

\begin{proof}
We proceed by induction on $n$. The result is obvious if $n=0$. Now, assume the claim is true for values less than or equal to $n$. Thus,\[r_{(1^n,\dot{0})}=\sum_{k=0}^n(-1)^kr_{(1^{n-k})}r_{(\dot{k})}.\]By multiplying by $r_{(1)}$ on both sides of the equation above, with respect to the product in \ref{051}, and by applying the inductive hypothesis, we obtain\[\begin{array}{rcl}
r_{(1^{n+1},\dot{0})}+r_{(2,1^{n-1},\dot{0})}&=&{\displaystyle\sum_{k=0}^{n-1}(-1)^kr_{(1^{n+1-k})}r_{(\dot{k})}+\sum_{k=0}^{n-1}(-1)^kr_{(2,1^{n-1-k})}r_{(\dot{k})}+(-1)^nr_{(1)}r_{(\dot{n})},}\\[0.6cm]
&=&{\displaystyle\sum_{k=0}^{n-1}(-1^k)r_{(1^{n+1-k})}r_{(\dot{k})}+\sum_{k=0}^{n-1}(-1)^k(r_{(2,1^{n-1-k},\dot{k})}+r_{(2,1^{n-k-2},\dot{(k+1)})})+(-1)^nr_{(\dot{n+1})},}\\[0.6cm]
&=&{\displaystyle\sum_{k=0}^{n-1}(-1^k)r_{(1^{n+1-k})}r_{(\dot{k})}+r_{(2,1^{n-1},\dot{0})}+(-1)^{n-1}r_{\dot{(n+1)}}+(-1)^nr_{(1)}r_{(\dot{n})}.}
\end{array}\]Therefore, ${\displaystyle r_{(1^{n+1},\dot{0})}=\sum_{k=0}^{n+1}r_{(1^{n+1-k})}r_{(\dot{k})}}$.
\end{proof}

\begin{pro}
We have:
\[\tp_n=(-1)^nr_{(1^n,\dot{0})},\qquad r_{(1^n,\dot{0})}=\sum_{k=0}^n(-1)^k e_{n-k}\th_k,\qquad\tp_n=\sum_{k=0}^n(-1)^{n-k}e_{n-k}\th_k.\]
\end{pro}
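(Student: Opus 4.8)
The plan is to derive all three identities as direct consequences of results already established, by applying the projection $\pi:\sNSym\to\sSym$ to the corresponding identities in $\sNSym$ and reading off the symmetric-function images via $r_\alpha=\pi(R_\alpha)$.

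First I would establish $\tp_n=(-1)^nr_{(1^n,\dot{0})}$. By Proposition \ref{033} we have $\Psi_n=(-1)^nR_{(1^n,\dot{0})}$ for $n\geq1$ (and $\Psi_0=R_{\dot{0}}$), so applying $\pi$ gives $\pi(\Psi_n)=(-1)^nr_{(1^n,\dot{0})}$. Since $\pi(\Psi_n)=\tp_n$ by Proposition \ref{039}, the first identity follows; the case $n=0$ is handled separately since $\pi(\Psi_0)=r_{\dot{0}}=\tp_0$.

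Next I would prove the second identity $r_{(1^n,\dot{0})}=\sum_{k=0}^n(-1)^ke_{n-k}\th_k$ by projecting Proposition \ref{052}, which reads $r_{(1^n,\dot{0})}=\sum_{k=0}^n(-1)^kr_{(1^{n-k})}r_{(\dot{k})}$. The key sub-step is to identify the two families of factors. On the one hand, equation \ref{035} gives $R_{(1^m)}=S_m$, so $r_{(1^m)}=\pi(S_m)=e_m$, because $\pi$ extends the classic projection sending the noncommutative elementary function $S_m$ to $e_m$. On the other hand, $(\dot{k})$ is maximal with respect to covering, hence $R_{(\dot{k})}=H_{(\dot{k})}=\tH_k$ and $r_{(\dot{k})}=\pi(\tH_k)=\th_k$. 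Substituting these two identifications into the projected form of Proposition \ref{052} yields exactly $\sum_{k=0}^n(-1)^ke_{n-k}\th_k$.

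Finally, the third identity is immediate from the first two: we get $\tp_n=(-1)^n\sum_{k=0}^n(-1)^ke_{n-k}\th_k=\sum_{k=0}^n(-1)^{n+k}e_{n-k}\th_k$, and since $(-1)^{n+k}=(-1)^{n-k}$ this equals $\sum_{k=0}^n(-1)^{n-k}e_{n-k}\th_k$. Since every step is a direct corollary of an already-proven statement, I anticipate no serious obstacle; the only points requiring genuine care are the two base identifications $r_{(1^m)}=e_m$ and $r_{(\dot{k})}=\th_k$ (the latter relying on maximality of $(\dot{k})$), together with the elementary sign bookkeeping in the last step.
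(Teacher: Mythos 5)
Your proposal is correct and follows essentially the same route as the paper: apply $\pi$ to Proposition \ref{033} together with $\pi(\Psi_n)=\tp_n$ for the first identity, substitute $r_{(1^{m})}=e_m$ and $r_{(\dot{k})}=\th_k$ into Proposition \ref{052} for the second, and combine the two for the third. The only cosmetic slip is that Proposition \ref{052} is already an identity in $\sSym$, so there is nothing to ``project'' there --- you just substitute the two identifications, exactly as the paper does.
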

\begin{proof}
The first equality follows directly by applying $\pi$ in Proposition \ref{033}. The second assertion is a consequence of Proposition \ref{052} and the fact that, in the classic case, $r_{(1^i)}=e_i$, where $e_i$ is the classic elementary symmetric function and $r_{(\dot{k})}=\th_k$. The last result follows directly from the first two equalities.
\end{proof}

Recall that the set of classic Ribbon Schur functions indexed by partitions is a basis of $\SSym$ \cite{BiThoWi06}. We conclude this section by extending this result to superspace.

For dotted compositions $\alpha,\beta$ with $a=\df(\alpha)$ and $b=\df(\beta)$, we set $r_{\alpha}\ast r_{\beta}=0$ if $\rg(\alpha)$ and $\beta_1$ are dotted, and $r_{\alpha}\ast r_{\beta}=r_{\alpha\odot\beta}$ otherwise. Thus, as $r_{\alpha}r_{\beta}=(-1)^{ab}r_{\beta}r_{\alpha}$, the product \ref{051} implies the following:\begin{equation}\label{054}r_{\alpha\beta}=(-1)^{ab}r_{\beta}r_{\alpha}-r_{\alpha}\ast r_{\beta}.\end{equation}

\begin{lem}\label{055}
Let $\alpha$ be a dotted composition such that $\tilde{\alpha}$ is a superpartition and $\df(\alpha)\geq1$. Then\[r_{\alpha}=(-1)^{\sigma(\alpha)}r_{\tilde{\alpha}}+\sum_{\mu}c_{\mu}r_{\mu}\quad\text{where}\quad\ell(\mu)<\ell(\alpha)\quad\text{and}\quad c_{\mu}\in\Z.\]
\end{lem}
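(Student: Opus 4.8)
The plan is to filter $\sSym$ by the length of the indexing composition and pass to the associated graded algebra, where the $\odot$-corrections drop out and the $r_\mu$ become a free supercommutative family that can simply be sorted. For $k\ge 0$ let $F_k\subseteq\sSym$ be the linear span of all $r_\mu$ with $\ell(\mu)\le k$; since $\pi$ is surjective (the $h_r,\th_k$ generate $\sSym$) and the $R_\mu$ form a basis of $\sNSym$, the $r_\mu=\pi(R_\mu)$ span $\sSym$, so this is an exhausting filtration. The product rule \ref{051} shows that $r_\mu r_\nu$ equals $r_{\mu\nu}$ (length $\ell(\mu)+\ell(\nu)$) plus at most the single term $r_{\mu\odot\nu}$ of length $\ell(\mu)+\ell(\nu)-1$; hence $F_pF_q\subseteq F_{p+q}$ and, modulo $F_{p+q-1}$, one has $r_\mu r_\nu\equiv r_{\mu\nu}$. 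Thus the associated graded $\operatorname{gr}=\bigoplus_k F_k/F_{k-1}$ is an associative graded algebra in which the class of a length-$k$ composition satisfies $[r_{(\alpha_1)}]\cdots[r_{(\alpha_k)}]=[r_\alpha]$, i.e. multiplication is concatenation.

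Next I would read \ref{054} inside $\operatorname{gr}$. Since $r_\nu r_\mu\equiv r_{\nu\mu}$ modulo shorter terms and $r_\mu\ast r_\nu$ is either $0$ or $r_{\mu\odot\nu}$ of strictly smaller length, equation \ref{054} gives $r_{\mu\nu}\equiv(-1)^{\df(\mu)\df(\nu)}r_{\nu\mu}$ modulo $F_{\ell(\mu\nu)-1}$. In $\operatorname{gr}$ this becomes $[r_{(m)}][r_{(n)}]=(-1)^{\df((m))\df((n))}[r_{(n)}][r_{(m)}]$, so the generators $[r_{(m)}]$ form a supercommutative family, the dotted ones odd and the undotted ones even. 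Now $[r_\alpha]=[r_{(\alpha_1)}]\cdots[r_{(\alpha_k)}]$, and using associativity to localise swaps together with supercommutativity to exchange adjacent factors, I would bubble-sort the factors into the nonincreasing order defining $\tilde\alpha$ (dotted entries first, as in the running example). Each adjacent swap contributes $-1$ exactly when it exchanges two dotted factors and $+1$ otherwise, so the accumulated sign is $(-1)$ to the number of inversions among the dotted components, that is $(-1)^{\sigma(\alpha)}$; the hypothesis that $\tilde\alpha$ is a superpartition forces the dotted components to be pairwise distinct, so $\sigma(\alpha)$ is the inversion number of a genuine permutation and $\tilde\alpha$ is well defined. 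Hence $[r_\alpha]=(-1)^{\sigma(\alpha)}[r_{\tilde\alpha}]$ in $F_k/F_{k-1}$, and lifting back yields $r_\alpha=(-1)^{\sigma(\alpha)}r_{\tilde\alpha}+\sum_\mu c_\mu r_\mu$ with every $\ell(\mu)<\ell(\alpha)$; as all structure constants in \ref{051} and \ref{054} are integers and the reordering signs are $\pm1$, the $c_\mu$ lie in $\Z$.

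The step I expect to require the most care is the sign bookkeeping: one must verify that only transpositions of two dotted components contribute a sign, and that the total sign accumulated during a bubble-sort depends only on the number of dotted inversions and not on the chosen sequence of swaps. This is exactly the well-definedness of the sign of the permutation of the dotted components, and it is precisely the reason for passing to $\operatorname{gr}$ first: there the interior swap of two adjacent factors is legitimate because concatenation is an associative product and \ref{054} has become a clean supercommutativity relation, so I avoid having to prove an ad hoc interior-exchange identity for $r$ directly in $\sSym$, where the $\odot$-terms would otherwise clutter every step.
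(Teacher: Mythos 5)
Your argument is correct. It rests on the same two facts as the paper's proof --- the product rule \ref{051}, whose only correction term $r_{\alpha\odot\beta}$ has length $\ell(\alpha)+\ell(\beta)-1$, and the commutation identity \ref{054} --- but packages them differently. The paper argues by induction on $\ell(\alpha)$: it splits $\alpha$ at its maximal dotted component, uses \ref{054} to rotate the tail block to the front, peels off that component, and applies the inductive hypothesis to the rotated word $\omega$, absorbing every $\ast$-term into the lower-length error; the sign $(-1)^{\sigma(\alpha)}$ is then assembled step by step from the factors $(-1)^{bc}$. You instead pass to the associated graded algebra of the length filtration, where concatenation becomes the product and the classes $[r_{(m)}]$ supercommute, and sort the whole word at once, so the sign appears as the Koszul sign of the permutation of the (pairwise distinct) dotted letters. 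This makes the sign bookkeeping --- exactly the point you flag as delicate --- automatic, at the mild cost of setting up the filtration, whereas the paper's route is more elementary but leaves the identity $(-1)^{bc+\sigma(\omega)}=(-1)^{\sigma(\alpha)}$ for the reader to verify. Two small remarks: since $r_{(m)}$ equals $h_m$ or $\tilde{h}_m$ and $\sSym$ is supercommutative, the generators already supercommute on the nose, so $r_{(\alpha_1)}\cdots r_{(\alpha_k)}=h_{\alpha}=(-1)^{\sigma(\alpha)}h_{\tilde{\alpha}}$ holds exactly and the lemma also drops out of Proposition \ref{053} together with $h_{\alpha}=\sum_{\alpha\preceq\beta}r_{\beta}$, so your appeal to \ref{054} inside $\operatorname{gr}$, while correct, is not strictly needed for the commutativity; and your integrality claim is sound because every identity invoked has coefficients in $\{0,\pm1\}$.
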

\begin{proof}
We proceed by induction on $k:=\ell(\alpha)$. If $k=1$ the result is obvious. If $k>1$, assume the claim is true for smaller values. Let $\alpha_i$ be the maximal dotted component of $\alpha$. Consider $\beta=(\alpha_1,\ldots,\alpha_{i-1})$ with $b=\df(\beta)$ and $\gamma=(\alpha_i,\ldots,\alpha_k)$ with $c=\df(\gamma)$. Then, by \ref{051} and \ref{054}, we have\[\begin{array}{rcl}
r_{\alpha}&=&(-1)^{bc}r_{\gamma}r_{\beta}-r_{\beta}\ast r_{\gamma},\\
&=&(-1)^{bc}r_{(\alpha_i,\ldots,\alpha_k,\alpha_1,\ldots,\alpha_{i-1})}+(-1)^{bc}r_{\gamma}\ast r_{\beta}-r_{\beta}\ast r_{\gamma},\\
&=&(-1)^{bc}r_{\alpha_i}r_{(\alpha_{i+1},\ldots,\alpha_k,\alpha_1,\ldots,\alpha_{i-1})}+(-1)^{bc+1}r_{\alpha_i}\ast r_{(\alpha_{i+1},\ldots,\alpha_k,\alpha_1,\ldots,\alpha_{i-1})}+(-1)^{bc}r_{\gamma}\ast r_{\beta}-r_{\beta}\ast r_{\gamma}.
\end{array}\]Now, by applying the inductive hypothesis on $\omega:=(\alpha_{i+1},\ldots,\alpha_k,\alpha_1,\ldots,\alpha_{i-1})$, we obtain\[r_{\alpha}=(-1)^{bc+\sigma(\omega)}r_{\alpha_i\tilde{\omega}}+\sum_{\mu}c_{\mu}r_{\mu}=(-1)^{\sigma(\alpha)}r_{\tilde{\alpha}}+\sum_{\mu}c_{\mu}r_{\mu},\]where $\ell(\mu)<\ell(\alpha)$.
\end{proof}

\begin{thm}
The set $\{r_{\Lambda}\mid\Lambda\mbox{ is a superpartition}\}$ is a basis of $\sSym$.
\end{thm}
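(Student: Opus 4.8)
The plan is to realise $\{r_\Lambda\mid\Lambda\text{ a superpartition}\}$ as the image of the known basis $\{h_\Lambda\mid\Lambda\text{ a superpartition}\}$ of $\sSym$ under a unitriangular change of coordinates, so that the basis property is automatic. Recall that the complete homogeneous functions in superspace $h_\Lambda$, indexed by superpartitions, form a basis of $\sSym$ \cite{DeLaMa06}; in particular each bigraded component of $\sSym$ (fixed degree and fixed fermionic degree) is finite dimensional, with dimension equal to the number of superpartitions of that type. I would compare $\{r_\Lambda\}$ with $\{h_\Lambda\}$ inside one such component using the expansion furnished by Proposition \ref{053}.

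First I would apply the first identity of Proposition \ref{053} with $\alpha=\Lambda$ a superpartition, obtaining $r_\Lambda=\sum_{\Lambda\preceq\beta}(-1)^{\ell(\Lambda)-\ell(\beta)+\sigma(\beta)}h_{\tilde\beta}$. The terms in which $\beta$ has a repeated dotted component vanish, since then $h_{\tilde\beta}=0$; so effectively $\beta$ ranges only over covers of $\Lambda$ with distinct dotted components, for which $\tilde\beta$ is again a superpartition of the same degree and fermionic degree as $\Lambda$ (covering preserves both, as noted after the definition of $\preceq$). The key point is the length bookkeeping: each covering step merges two components into one and so strictly shortens a dotted composition, whence the only $\beta\succeq\Lambda$ with $\ell(\beta)=\ell(\Lambda)$ is $\beta=\Lambda$ itself. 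Since $\Lambda$ is already sorted we have $\tilde\Lambda=\Lambda$ and $\sigma(\Lambda)=0$, so this term contributes exactly $h_\Lambda$ with coefficient $1$, while every remaining term is some $h_{\Lambda'}$ with $\ell(\Lambda')=\ell(\tilde\beta)=\ell(\beta)<\ell(\Lambda)$. This yields $r_\Lambda=h_\Lambda+\sum_{\ell(\Lambda')<\ell(\Lambda)}c_{\Lambda'}h_{\Lambda'}$, the sum over superpartitions $\Lambda'$ of strictly smaller length.

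From here the conclusion is immediate. Ordering the superpartitions of a fixed bigraded component by decreasing length, the matrix expressing the $r_\Lambda$ in terms of the $h_\Lambda$ is block triangular with respect to length, and each diagonal block (grouping superpartitions of equal length) is the identity by the previous paragraph. Such a matrix has integer entries and unit diagonal blocks, hence is invertible, so $\{r_\Lambda\}$ is a basis of the component, and therefore of $\sSym$. The only step requiring genuine care is precisely this strict length drop combined with the vanishing of the repeated-dotted terms, i.e.\ verifying that no term of length $\ell(\Lambda)$ other than $h_\Lambda$ survives; the rest is formal once $\{h_\Lambda\}$ is accepted as a basis.

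An alternative route, closer to the preceding lemma, would be to prove spanning directly and then finish by counting. Here one uses that $\{R_\alpha\}$ is a basis of $\sNSym$ and that $\pi$ is surjective (its image contains $h_n,\tH_n\mapsto h_n,\th_n$, which generate $\sSym$), so $\{r_\alpha\}$ spans $\sSym$; Lemma \ref{055} then rewrites each $r_\alpha$ with $\tilde\alpha$ a superpartition as $\pm r_{\tilde\alpha}$ plus strictly shorter $r_\mu$'s, and an induction on length collapses the spanning set onto $\{r_\Lambda\}$, after which the dimension count provided by the monomial basis $\{m_\Lambda\}$ upgrades spanning to a basis. The main obstacle on this route is the dotted compositions with repeated dotted components, which fall outside the hypotheses of Lemma \ref{055}; these are most cleanly disposed of by re-expanding such $r_\alpha$ via Proposition \ref{053} into the superpartition-indexed $h_{\tilde\beta}$ and inverting the unitriangular relation above, which is why I would take the triangularity argument as the main line.
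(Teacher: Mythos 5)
Your main argument is correct and takes a genuinely different route from the paper. The paper proves only the spanning half directly: it fixes an arbitrary dotted composition $\alpha$ and shows by induction on $\ell(\alpha)$ that $r_{\alpha}$ lies in the span of $\{r_{\Lambda}\}$, splitting into the case where $\alpha$ has a repeated dotted component (handled exactly as in your ``alternative route,'' by re-expanding through Proposition \ref{053} and using $h_{\alpha}=0$) and the case where $\tilde{\alpha}$ is a superpartition (handled by Lemma \ref{055}, which peels off $(-1)^{\sigma(\alpha)}r_{\tilde{\alpha}}$ modulo strictly shorter terms); linear independence is then left to the implicit dimension count against the superpartition-indexed bases of $\sSym$. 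Your main line instead compares $\{r_{\Lambda}\}$ directly with the known basis $\{h_{\Lambda}\}$ via Proposition \ref{053}: the observation that each covering step strictly decreases length forces $r_{\Lambda}=h_{\Lambda}+\sum_{\ell(\Lambda')<\ell(\Lambda)}c_{\Lambda'}h_{\Lambda'}$ within a fixed bidegree, and block unitriangularity gives spanning and independence simultaneously, with no appeal to Lemma \ref{055} and no separate dimension count. The trade-off is that the paper's route also yields, as a byproduct, an explicit algorithm expanding an arbitrary $r_{\alpha}$ in the basis $\{r_{\Lambda}\}$ (which is what Lemma \ref{055} is really for), whereas your triangularity argument is shorter and makes the invertibility of the transition matrix visible at a glance. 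Both arguments rest on the same two facts --- that covering strictly drops length and that $h_{\tilde{\beta}}$ vanishes when $\beta$ has repeated dotted components --- and you have identified and justified both correctly.
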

\begin{proof}
Let $\alpha$ be a dotted composition. It is enough to show that $r_{\alpha}$ is a linear combination of elements in $\{r_{\Lambda}\}$. If $\df(\alpha)=0$, the result is true due to the classic case. For $\df(\alpha)\geq1$, we proceed by induction on $k:=\ell(\alpha)$. If $k=1$, the result is obvious. If $k>1$, we assume the claim is true for dotted compositions of smaller length. We will distinguish two cases. 

If $\alpha$ has repeated dotted components, then $h_{\alpha}=0$. So, by Proposition \ref{053}, we have\[r_{\alpha}=\sum_{\alpha\prec\beta}(-1)^{\ell(\alpha)-\ell(\beta)+\sigma(\beta)}h_{\tilde{\beta}}=\sum_{\alpha\prec\beta}(-1)^{\ell(\alpha)-\ell(\beta)+\sigma(\beta)}\sum_{\alpha_{\tilde{\beta}}\preceq\gamma}r_{\gamma}.\]Since $\ell(\gamma)\leq\ell(\beta)<\ell(\alpha)$ for each $\gamma$ above, the result is obtained by applying the inductive hypothesis to $r_{\gamma}$.

Now, assume that $\tilde{\alpha}$ is a superpartition. Lemma \ref{055} implies that\[r_{\alpha}=(-1)^{\sigma(\alpha)}r_{\tilde{\alpha}}+\sum_{\mu}c_{\mu}r_{\mu}\quad\text{where}\quad\ell(\mu)<\ell(\alpha)\quad\text{and}\quad c_{\mu}\in\Z.\]We conclude the proof by applying the inductive hypothesis on each $r_{\mu}$.
\end{proof}

\subsubsection*{Acknowledgements}

This work is part of the research group GEMA Res.180/2019 VRIP--UA. The second author was supported by the Chilean posdoctoral grant ANID--FONDECYT, No. 3200447.



\bibliographystyle{plainurl}
\bibliography{../../../Documents/Projects/LaTeX/bibtex}

\end{document}